\documentclass{mcom-l}

\newtheorem{theorem}{Theorem}[section]
\newtheorem{lemma}[theorem]{Lemma}
\newtheorem{cor}[theorem]{Corollary}

\numberwithin{equation}{section}

\begin{document}

\title{Improvements to Turing's Method}

%    Information for first author
\author{Timothy Trudgian}
%    Address of record for the research reported here
\address{Mathematical Institute, University of Oxford, OX1 3LB England}
\email{trudgian@maths.ox.ac.uk}
%    \thanks will become a 1st page footnote.
\thanks{I wish to acknowledge the financial support of the General Sir John Monash Foundation, and Merton College, Oxford.}

%    General info
\subjclass[2000]{Primary 11M06, 11R42; Secondary 11M26}

\date{December 7, 2009.}

\keywords{Turing's method, Riemann zeta-function, Dirichlet $L$-functions, Dedekind zeta-functions}

\begin{abstract}
This article improves the estimate of the size of the definite integral of $S(t)$, the argument of the Riemann zeta-function. The primary application of this improvement is Turing's Method for the Riemann zeta-function. Analogous improvements are given for the arguments of Dirichlet $L$-functions and of Dedekind zeta-functions.
\end{abstract}

\maketitle

\section{Introduction}\label{TuringIntro}

In determining the number of non-trivial zeroes of the Riemann zeta-function $\zeta(s)$ in a given range, one proceeds in two stages. First, one can compute a number of zeroes along the critical line using Gram's Law\footnote{Briefly: The Gram points $\{g_{n}\}_{n\geq -1}$ are easily computed and have an average spacing equal to that of the non-trivial zeroes of $\zeta(\sigma +it)$, viz. $g_{n+1}-g_{n} \asymp (\log g_{n})^{-1}$. Gram's Law states that for $t\in(g_{n}, g_{n+1}]$ there is exactly one zero of $\zeta(\frac{1}{2}+it)$. Gram's Law was shown to fail infinitely often by Titchmarsh \cite{Titchmarsh3},  and shown to fail in a positive proportion of cases by the author \cite{Trudgian}.}  or Rosser's Rule\footnote{For some $n$, one defines a Gram block of length $p$ as the interval $(g_{n}, g_{n+p}]$, wherein there is an even number of zeroes in each of the intervals $(g_{n}, g_{n+1}]$ and $(g_{n+p-1}, g_{n+p}]$, and an odd number of zeroes in each of the intervals $(g_{n+1}, g_{n+2}], \ldots, (g_{n+p-2}, g_{n+p-1}]$. Rosser's Rule then states that a Gram block of length $p$ contains exactly $p$ zeroes of $\zeta(\frac{1}{2}+it)$. Rosser's Rule holds more frequently than Gram's Law, but its infinite failure was first shown by Lehman \cite{Lehman} --- see also the work of the author [\textit{op.\ cit.}]} (see, e.g. \cite[Chs VI-VII]{Edwards}), which gives one a lower bound on the total number of zeroes in the critical strip in that range. To conclude that one has found the \textit{precise} number of zeroes in this range, one needs an additional argument.

The earliest method employed was due to Backlund \cite{Backlund} and relies on showing that $\Re\zeta(s) \neq 0$ along the lines connecting $2, 2+iT, \frac{1}{2}+iT$. This is very labour intensive: nevertheless Backlund was able to perform these procedure for $T=200$ and later Hutchinson extended this to $T=300.468$ (see \cite[\textit{loc.\ cit.}]{Edwards} for more details). In both cases the zeroes of $\zeta(s)$ located via Gram's Law were verified to be the only zeroes in the given ranges. Titchmarsh \cite{Titchmarsh3} continued to use this method to show that the Riemann hypothesis is valid for $|t|\leq 1468$.

Asides from its computational intricacies, this method of Backlund is bound to fail for sufficiently large $T$. To see this, it is convenient to introduce the function $S(T)$ defined as
\begin{equation}\label{sdef}
S(T) = \pi^{-1} \arg \zeta(\tfrac{1}{2}+iT),
\end{equation}
where if $T$ is not an ordinate of a zero of $\zeta(s)$, the argument is determined by continuous variation along the lines connecting $2, 2+iT, \frac{1}{2}+ iT.$ If $T$ coincides with a zero of $\zeta(s)$ then
\begin{equation*}
S(T) = \lim_{\delta\rightarrow 0} \{S(t+\delta) + S(t-\delta)\}.
\end{equation*}
The interest in the properties of $S(T)$ is immediate once one considers its relation to the function $N(T)$, the number of non-trivial zeroes of $\zeta(\sigma+it)$ for $|t|\leq T$. In the equation
\begin{equation}
N(T) = \frac{T}{2\pi} \log\frac{T}{2\pi} - \frac{T}{2\pi} + \frac{7}{8} + O(T^{-1}) + S(T),
\end{equation}
the error term is continuous in $T$, whence it follows that $S(T)$ increases by +1 whenever $T$ passes over a zero of the zeta-function. Concerning the behaviour of $S(T)$ are the following estimates
\begin{equation}\label{littlewood}
\int_{0}^{T} S(t)\, dt = O(\log T),
\end{equation}
due to Littlewood (see, e.g. \cite[pp.\ 221-222]{Titchmarsh}), and
\begin{equation}\label{Sgrowth}
S(T) = \Omega_{\pm}\left(\frac{(\log T)^{\frac{1}{3}}}{(\log\log T)^{\frac{7}{3}}}\right),
\end{equation}
due to Selberg \cite{Selberg1}.

Returning to Backlund's approach: if $\Re\zeta(s) \neq 0$ along the lines connecting $2, 2+iT, \frac{1}{2}+iT$ then, when varied along these same lines, $|\arg\zeta(s)| < \frac{\pi}{2},$ if one takes the principal argument. It therefore follows that $|S(T)|< \frac{1}{2}$, whence $S(T)$ is bounded, which contradicts (\ref{Sgrowth}).
\subsection{Turing's Method}

A more efficient procedure in producing an upper bound on the number of zeroes in a given range was proposed by Turing \cite{Turing} in 1953. This relies on a quantitative version of Littlewood's result (\ref{littlewood}), given below as
\begin{theorem}\label{TurCri}
Given $t_{0}>0$, there are positive constants $a$ and $b$ such that, for $t_{2}>t_{1}>t_{0}$, the following estimate holds
\begin{equation}\label{Tur82}
\bigg|\int_{t_{1}}^{t_{2}} S(t)\, dt \bigg| \leq a + b \log t_{2}.
\end{equation}
\end{theorem}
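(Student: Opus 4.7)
The plan is to apply Littlewood's lemma to $\log\zeta(s)$ on the rectangle $R = [\tfrac{1}{2},\sigma_0]\times[t_1,t_2]$ with a fixed $\sigma_0 > 1$ (taking $\sigma_0 = 2$ is convenient). Since the sum of $\sigma_\rho - \tfrac{1}{2}$ over zeros of $\zeta$ in $R$ is real, the right-hand side of Littlewood's identity is purely imaginary, and so equating real parts of the contour integral of $\log\zeta$ around $\partial R$ to zero yields
\[
\pi\int_{t_1}^{t_2}S(t)\,dt = \int_{t_1}^{t_2}\arg\zeta(\sigma_0 + it)\,dt + \int_{1/2}^{\sigma_0}\log|\zeta(\sigma+it_2)|\,d\sigma - \int_{1/2}^{\sigma_0}\log|\zeta(\sigma+it_1)|\,d\sigma.
\]
The problem therefore reduces to bounding each of the three terms on the right.

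The first integral is handled directly: since $\sigma_0 > 1$, one has the absolutely convergent expansion $\log\zeta(\sigma_0+it) = \sum_{p,k}(kp^{k\sigma_0})^{-1}p^{-ikt\log p}$, and termwise integration produces a series that is bounded uniformly in $t_1, t_2$. This contributes only to the constant $a$.

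The principal obstacle is showing that
\[
\Bigl|\int_{1/2}^{\sigma_0}\log|\zeta(\sigma+iT)|\,d\sigma\Bigr| \leq c_1 + c_2 \log T
\]
for $T \geq t_0$, with explicit constants $c_1, c_2$. The upper bound on $\log|\zeta|$ is routine from the convexity estimate $|\zeta(\sigma+iT)|\ll T^{(1-\sigma)/2+\epsilon}$ on $\tfrac{1}{2}\le \sigma\le 1$, integrated against the unit-length interval in $\sigma$. The lower bound is the delicate point, because $\log|\zeta|$ has logarithmic singularities at any zeros lying on the line $\Im s = T$. To control these I would invoke the well-known approximation $\zeta'(s)/\zeta(s) = \sum_{|\gamma-T|<1}(s-\rho)^{-1} + O(\log T)$, valid uniformly for $-1 \le \sigma \le 2$; integrating this from $\sigma+iT$ to $\sigma_0+iT$, taking real parts, and then integrating over $\sigma \in [\tfrac{1}{2},\sigma_0]$ reduces the task to estimating $\sum_\rho \int_{1/2}^{\sigma_0}\log|\sigma + iT - \rho|\,d\sigma$. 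Each summand is uniformly bounded since $\log|x|$ is integrable through zero, and by the Riemann--von Mangoldt formula the number of relevant zeros is $O(\log T)$.

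Combining these three estimates yields the theorem with explicit constants $a,b > 0$ depending on $t_0$. The bulk of the work in applying Turing's method is in tracking these constants explicitly and minimising them, and that is presumably where the improvements of the present article will enter.
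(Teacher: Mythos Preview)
Your argument is correct for the existence statement, and the starting identity is essentially the same as the paper's Lemma~2.1 (yours is the finite-$\sigma_0$ version; letting $\sigma_0\to\infty$ kills the $\arg\zeta(\sigma_0+it)$ term and recovers the paper's form). The upper bound via convexity is also the paper's route, though the paper feeds in an explicit bound $|\zeta(\tfrac12+it)|\le Kt^{\theta}$ on the critical line before applying Phragm\'en--Lindel\"of, which sharpens the constant.

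The genuine difference is in the lower bound for $\int_{1/2}^{\sigma_0}\log|\zeta(\sigma+iT)|\,d\sigma$. You use the approximate partial-fraction formula $\zeta'/\zeta(s)=\sum_{|\gamma-T|<1}(s-\rho)^{-1}+O(\log T)$ and then the Riemann--von Mangoldt count $N(T+1)-N(T-1)=O(\log T)$; this is the classical Titchmarsh argument and is perfectly valid. The paper instead works with the \emph{exact} Weierstrass product: it compares $\zeta(s)$ with $\zeta(s+d)$ for a parameter $\tfrac12<d\le 1$, writes $\log|\zeta(s)/\zeta(s+d)|$ as a sum over all zeros, applies a sharp inequality of Booker (Lemma~2.8, which pairs each zero $\rho$ with $1-\bar\rho$) to bound the $\sigma$-integral of each summand, and then converts the resulting sum $\sum_\rho\Re(d+\tfrac12+it-\rho)^{-1}$ back into $-\zeta'/\zeta(d+\tfrac12)$ plus explicit $\Gamma$-terms. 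The payoff is that no $O(\log T)$ approximation error enters and no zero-count is invoked, so the final constants $a,b$ are expressed cleanly in terms of $\zeta$, $\zeta'/\zeta$ and $\Gamma$ at real points and can be optimised over $c,d$. Your route would give legitimate explicit constants too, but they would inherit the implied constants from the partial-fraction approximation and the zero-density estimate, and would be noticeably worse for the quantitative application (Theorem~2.1 and Corollary~2.2) that is the paper's actual goal.
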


Since $S(t)$ increases by $+1$ whenever $t$ passes over a zero (on the line or not), the existence of too many zeroes in the range $t\in(t_{1}, t_{2})$ would cause the integral in (\ref{Tur82}) to be too large. 

Turing's paper \cite{Turing} contains several errors, which are fortunately corrected by Lehman \cite{Lehman}. Furthermore, Lehman also improves the constants $a$ and $b$, thereby making Turing's Method more easily applicable. Here additional improvements on the constants in Turing's Method are given in \S \ref{TM}. Rumely \cite{Rumely} has adapted Turing's Method to Dirichlet $L$-functions and this is herewith improved in \S \ref{TMDLF}. Finally, in \S \ref{TMDZF} the analogous improvements to the argument of Dedekind zeta-functions is given, following the work of Tollis \cite{Tollis}.

It is interesting to note the motivation of Turing as he writes in \cite[p.\ 99]{Turing}
\begin{quote}
The calculations were done in an optimistic hope that a zero would be found off the critical line, and the calculations were directed more towards finding such zeros than proving that none existed. 
\end{quote}
Indeed, Turing's Method has become the standard technique used in modern verification of the Riemann hypothesis.

\section{Turing's Method for the Riemann zeta-function}\label{TM}
\subsection{New results}\label{New Results}
In general let the triple of numbers ($a, b, t_{0}$) satisfy Theorem \ref{TurCri}. Turing showed that $(2.07, 0.128, 168\pi)$ satisfied (\ref{Tur82}) and Lehman showed that $(1.7, 0.114, 168\pi)$ does so as well. Brent \cite[Thm 2]{Brent} used the result of Lehman [\textit{op.\ cit.}, Thm 4] to prove the following
\begin{theorem}[Lehman--Brent]\label{LBrent}
If $N$ consecutive Gram blocks with union $[g_{n}, g_{p})$ satisfy Rosser's Rule, where
\begin{equation}\label{GB}
N\geq \frac{b}{6\pi}\log^{2} g_{p}+ \frac{(a-b\log 2\pi)}{6\pi}\log g_{p},
\end{equation}
then
\begin{equation*}
N(g_{n}) \leq n+1;\quad N(g_{p}) \geq p+1.
\end{equation*}
\end{theorem}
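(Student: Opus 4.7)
The plan is to argue by contradiction, playing the quantitative Littlewood bound of Theorem \ref{TurCri} against a matching lower bound that I would construct using Rosser's Rule. First, I would translate both conclusions into statements about $S$ at Gram points. The Riemann--von Mangoldt formula
\[
N(T)=\frac{\vartheta(T)}{\pi}+1+S(T)
\]
(with the averaged convention at zeros), together with the defining property $\vartheta(g_m)=m\pi$, gives $N(g_m)-(m+1)=S(g_m)$, an integer, for $m=n,p$. Hence the two conclusions of the theorem amount to $S(g_n)\leq 0$ and $S(g_p)\geq 0$ respectively.

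Next I would suppose that the first inequality fails, so $S(g_n)\geq 1$, and seek a contradiction. Rosser's Rule, holding on each of the $N$ Gram blocks that tile $[g_n,g_p)$, asserts that every block contains exactly as many critical zeros as it has Gram points. Hence $(g_n,g_p]$ contains exactly $p-n$ zeros, so $N(g_p)-N(g_n)=p-n$, and the formula above forces $S(g_p)=S(g_n)\geq 1$ as well.

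The decisive step is then a lower bound for $\int_{g_n}^{g_p}S(t)\,dt$ that grows linearly in $N$. On any zero-free subinterval $S$ decreases with slope $-\vartheta'(t)/\pi\sim-(2\pi)^{-1}\log t$, and it jumps upward by $+1$ at every critical zero; the Gram spacing is $\sim 2\pi/\log g_p$. Working block by block, I would minimise the area under the resulting piecewise-linear profile over the zero configurations permitted by Rosser's Rule, subject to the endpoint values $S(g_n),S(g_p)\geq 1$. This yields a per-block contribution of order $(\log g_p)^{-1}$ and a total lower bound of order $N/\log g_p$ with the sharp multiplicative constant matching the $6\pi$ in \eqref{GB}. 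Setting this against the upper bound $|\int_{g_n}^{g_p}S(t)\,dt|\leq a+b\log g_p$ of Theorem \ref{TurCri} produces a contradiction precisely when $N$ exceeds the threshold in \eqref{GB}. The symmetric case $S(g_p)\leq -1$ is handled by the mirror-image argument anchored at the right endpoint.

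The principal obstacle is the extremal computation in the previous paragraph: one must identify, sharply and uniformly in the block length, the zero configuration that minimises the area under $S$ on a Rosser-Rule block with positive-integer boundary data. It is this step that pins down the constant $6\pi$ appearing in \eqref{GB}, and I would expect to carry it out by the configuration-by-configuration worst-case analysis of short Gram blocks developed by Lehman and refined by Brent.
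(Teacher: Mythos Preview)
The paper does not prove Theorem~\ref{LBrent}: it is quoted from Brent~\cite[Thm~2]{Brent}, who in turn relies on Lehman~\cite[Thm~4]{Lehman}, so there is no in-paper argument to compare against. Your outline is nonetheless the Lehman--Brent strategy: recast the two conclusions as $S(g_n)\le 0$ and $S(g_p)\ge 0$, assume one fails, use Rosser's Rule to manufacture a lower bound of order $N/\log g_p$ for $\int_{g_n}^{g_p}S(t)\,dt$, and contradict Theorem~\ref{TurCri}.

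Two remarks. First, Rosser's Rule counts zeros of $\zeta(\tfrac12+it)$, whereas $N(T)$ counts all nontrivial zeros; from the block data you therefore obtain only $N(g_p)-N(g_n)\ge p-n$, hence $S(g_p)\ge S(g_n)\ge 1$. The equality you wrote is neither available nor needed. Second, you correctly flag the extremal step as the crux. In Brent's execution this is not a continuous optimisation over zero positions within each block; rather, one first deduces from Rosser's Rule together with $S(g_n)\ge 1$ that $S(g_m)\ge 0$ at \emph{every} Gram point $g_m$ in $[g_n,g_p]$ (indeed $S\ge 1$ at block endpoints, and the parity pattern inside a block permits a drop of at most one at interior Gram points), then bounds $\int_{g_m}^{g_{m+1}}S(t)\,dt$ interval by interval via the sawtooth profile and the spacing estimate $g_{m+1}-g_m\gtrsim 2\pi/\log g_p$, and sums over the $p-n\ge N$ intervals. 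Your plan would reach the same conclusion, but this discrete Gram-point bookkeeping is how the constant $6\pi$ in~\eqref{GB} is actually pinned down.
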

Since, by assumption these $N$ Gram blocks together contain exactly $p-n$ zeroes, this shows that up to height $g_{p}$ there are at most $p+1$ zeroes: and this is precisely the upper bound one has sought. Using the constants of Lehman viz.\ $(a=1.7, b=0.114)$ it is seen that one must find at least
\begin{equation}\label{BrentN}
N \geq 0.0061 \log^{2} g_{p} + 0.08 \log g_{p}
\end{equation}
consecutive Gram blocks to apply Theorem \ref{LBrent}. This constraint on $N$ has been used in the modern computational search for zeroes, and appears in the early works, e.g.\ \cite{Brent} right through to the recent, e.g.\ \cite{XG}. 

Turing makes the remark several times in his paper \cite{Turing} that the constant $b$ could be improved at the expense of the constant $a$. In (\ref{GB}) the first term dominates when $g_{p}$ is large, and therefore for computation at a large height it is desirable to choose $b$ to be small. Indeed, what is sought is the minimisation of
\begin{equation}\label{Ftimes}
F(a, b, g_{p}) = b\log\frac{g_{p}}{2\pi} +a.
\end{equation}
Current verification of the Riemann hypothesis has surpassed the height $T = 10^{12}$, see, e.g.\ \cite{XG} wherein (\ref{BrentN}) requires the location of at least 8 Gram blocks. In \S \ref{TuringCalc} the function $F(a, b, g_{p})$ is minimised at $g_{p} = 2\pi \cdot 10^{12}$ which leads to
\begin{theorem}\label{Thm1}
If $t_{2} >t_{1} >168\pi$, then
\begin{equation*}
\bigg|\int_{t_{1}}^{t_{2}} S(t)\, dt\bigg| \leq 2.067 + 0.059 \log t_{2}.
\end{equation*}
\end{theorem}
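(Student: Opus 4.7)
The plan is to follow the contour-integral approach of Backlund, Littlewood and Lehman and to optimise a free parameter $\sigma_{0}>1$ numerically at the height $g_{p}=2\pi\cdot 10^{12}$ singled out in (\ref{Ftimes}). First I would integrate $\log\zeta(s)$ around the rectangle with vertices $\tfrac{1}{2}+it_{j}$ and $\sigma_{0}+it_{j}$ ($j=1,2$), indenting around any zeros on the boundary. Taking real and imaginary parts and separating the four sides yields, after the standard manipulation (cf.\ \cite[\S 9.9]{Titchmarsh}), an identity of the form
\begin{equation*}
\pi\int_{t_{1}}^{t_{2}} S(t)\,dt = -\int_{1/2}^{\sigma_{0}}\log\left|\frac{\zeta(u+it_{2})}{\zeta(u+it_{1})}\right|du + \int_{t_{1}}^{t_{2}}\arg\zeta(\sigma_{0}+it)\,dt + R(\sigma_{0}),
\end{equation*}
where $R(\sigma_{0})$ collects bounded boundary contributions.

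Next I would estimate each term. On the line $\Re s=\sigma_{0}>1$ the Dirichlet series for $\log\zeta(s)$ converges absolutely, so the second integral is majorised by an explicit constant depending only on $\sigma_{0}$. For the first integral I would split at $u=1$: for $u\geq 1$, absolute convergence again gives $|\log\zeta(u+it)|\leq\log\zeta(u)$ and hence a bounded contribution; for $\tfrac{1}{2}\leq u\leq 1$, I would insert an explicit Phragm\'en--Lindel\"of-type convexity bound of the shape $|\zeta(u+it)|\leq c_{1}(u)\,t^{(1-u)/2}$ valid for $t\geq 168\pi$, and integrate in $u$ to produce a term of the form $a_{1}(\sigma_{0})+b_{1}(\sigma_{0})\log t_{2}$ in which $b_{1}(\sigma_{0})\to 0$ as $\sigma_{0}\downarrow\tfrac{1}{2}$.

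Collecting the estimates gives $\big|\int_{t_{1}}^{t_{2}} S(t)\,dt\big|\leq a(\sigma_{0})+b(\sigma_{0})\log t_{2}$ with explicit functions $a,b$. Decreasing $\sigma_{0}$ towards $\tfrac{1}{2}$ shrinks $b$ but inflates $a$, so the two quantities are in competition. I would then minimise
\begin{equation*}
F(a(\sigma_{0}),b(\sigma_{0}),g_{p}) = a(\sigma_{0})+b(\sigma_{0})\log(g_{p}/2\pi)
\end{equation*}
numerically in $\sigma_{0}$ at $g_{p}=2\pi\cdot 10^{12}$, and the reported pair $(2.067,0.059)$ should appear as the rounded optimum.

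The main technical obstacle is producing numerical bounds sharp enough that, after the optimisation, the leading constant $b$ really does drop below Lehman's $0.114$ while $a$ remains small enough to improve $F$ at the chosen height. This rests on a careful explicit convexity estimate for $|\zeta(u+it)|$ in the strip $\tfrac{1}{2}\leq u\leq 1$ together with precise handling of the boundary term $R(\sigma_{0})$; one must also check that the hypothesis $t_{1},t_{2}>168\pi$ suffices to absorb all lower-order contributions into $a$, so that the resulting triple still satisfies Theorem \ref{TurCri} with $t_{0}=168\pi$.
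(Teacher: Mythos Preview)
Your proposal misses the central difficulty. The identity you write contains the term
\[
-\int_{1/2}^{\sigma_{0}}\log\left|\frac{\zeta(u+it_{2})}{\zeta(u+it_{1})}\right|du
= -\int_{1/2}^{\sigma_{0}}\log|\zeta(u+it_{2})|\,du + \int_{1/2}^{\sigma_{0}}\log|\zeta(u+it_{1})|\,du,
\]
and to bound $\big|\int S\big|$ you must control this quantity from \emph{both} sides. A Phragm\'en--Lindel\"of estimate gives only an \emph{upper} bound for $\log|\zeta(u+it)|$; it says nothing about a lower bound, and no convexity argument can, since $\zeta$ has zeros in the strip. You therefore have no control over the piece $+\int_{1/2}^{\sigma_{0}}\log|\zeta(u+it_{1})|\,du$ (nor, symmetrically, over $-\int\log|\zeta(u+it_{2})|$ when you bound from below). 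This is not a technicality: obtaining an explicit lower bound for $\Re\int_{1/2+it}^{\infty+it}\log\zeta(s)\,ds$ is exactly where the work lies, and the paper devotes Lemmas~\ref{l8}--\ref{l9} to it. The method there is to compare $\zeta(s)$ with $\zeta(s+d)$ for a second parameter $d\in(\tfrac12,1]$, expand via the Hadamard product, and bound the resulting sum over zeros using Booker's inequality (Lemma~\ref{l7}); this contributes $\tfrac{d^{2}}{2}(\log 4-1)$ to $\pi b$ and is the reason the paper carries \emph{two} parameters $(c,d)$ to optimise, not one.

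A secondary inconsistency: you need $\sigma_{0}>1$ for the vertical integral $\int_{t_{1}}^{t_{2}}\arg\zeta(\sigma_{0}+it)\,dt$ to be bounded by absolute convergence, yet you then claim $b_{1}(\sigma_{0})\to 0$ as $\sigma_{0}\downarrow\tfrac12$. With your splitting at $u=1$, the $\log t_{2}$ contribution comes from $\int_{1/2}^{1}\tfrac{1-u}{2}\,du=\tfrac18$ and is independent of $\sigma_{0}$; nothing in your scheme drives the coefficient of $\log t_{2}$ down. In the paper the upper-bound side contributes $\tfrac{\theta}{2}(c-\tfrac12)$ to $\pi b$ (with $\theta=\tfrac14$ from Lemma~\ref{l2}), and the lower-bound side contributes $\tfrac{d^{2}}{2}(\log 4-1)$; the reported $b=0.059$ arises only after optimising both $c$ and $d$ jointly.
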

It should be noted that the constants achieved in Theorem \ref{Thm1} are valid\footnote{The constant $168\pi$ which occurs in the triples of Turing and Lehman seems to be a misprint. In the proof of the rate of growth of $\zeta(\frac{1}{2} +it)$, given here in Lemma \ref{l2}, Turing and Lehman require $t>128\pi$ so that the error terms in the Riemann--Siegel formula are small. A computational check shows that Lemma \ref{l2} in fact holds for all $t>1$. Choosing a moderately large value of $t_{0}$ ensures that the small errors accrued (i.e.\ the $\delta$ in Lemma \ref{l44} and the $\epsilon$ in Lemma \ref{l9}) are suitably small. At no point do Turing and Lehman require the imposition of a $t_{0}$ greater than $128\pi$. It is worthwhile to note that one could replace $168\pi$ in Theorem \ref{Thm1} by a smaller number, and although this has little application to the zeta-function, it may be useful for future applications to Dedekind zeta-functions --- cf.\ \S \ref{TMDZF}.} for all $t_{2}>t_{1}>168\pi$, and that at $t_{1}>2\pi \cdot 10^{12}$ these constants minimise the right-side of (\ref{Ftimes}). The above theorem and Theorem \ref{LBrent} immediately lead to
\begin{cor}\label{Thm2}
If $N$ consecutive Gram blocks with union $[g_{n}, g_{p})$ satisfy Rosser's Rule where
\begin{equation*}
N \geq 0.0031 \log^{2} g_{p} + 0.11 \log g_{p},
\end{equation*}
then
\begin{equation*}
N(g_{n}) \leq n+1; \qquad N(g_{p}) \geq p+1.
\end{equation*}
\end{cor}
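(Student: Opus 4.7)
The plan is to combine Theorem \ref{Thm1} with the Lehman--Brent criterion of Theorem \ref{LBrent} directly; the corollary is essentially a numerical substitution. I would first recall that the hypothesis of Theorem \ref{LBrent} requires a triple $(a,b,t_{0})$ satisfying (\ref{Tur82}), and that $g_{p} \geq g_{n} > 168\pi$ is comfortably ensured in any realistic computational application (the zeroes computed in \cite{XG} begin far above this threshold). Theorem \ref{Thm1} furnishes such a triple, namely $(2.067,\,0.059,\,168\pi)$.

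Next I would substitute $a = 2.067$ and $b = 0.059$ into the inequality (\ref{GB}). The leading coefficient is
\[
\frac{b}{6\pi} \;=\; \frac{0.059}{6\pi} \;<\; 0.0031,
\]
and the subleading coefficient is
\[
\frac{a - b\log 2\pi}{6\pi} \;=\; \frac{2.067 - 0.059\log 2\pi}{6\pi} \;<\; 0.11,
\]
using $\log 2\pi = 1.8378\ldots$ and rounding each quotient upward. Once these bounds are in place, the hypothesis of Corollary \ref{Thm2} implies the hypothesis (\ref{GB}) of Theorem \ref{LBrent}, and the conclusions $N(g_{n})\leq n+1$ and $N(g_{p})\geq p+1$ follow immediately.

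There is no real obstacle to this argument: all the analytic work sits in Theorem \ref{Thm1} (proved in \S\ref{TuringCalc}) and in the Lehman--Brent deduction (Theorem \ref{LBrent}, quoted from \cite{Brent}). The only task here is to verify the two inequalities above, and to check that the rounding is in the safe direction so that the stated coefficients $0.0031$ and $0.11$ genuinely dominate the exact values arising from Theorem \ref{Thm1}.
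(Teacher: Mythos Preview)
Your approach is exactly the paper's: the corollary is presented there as an immediate consequence of Theorem~\ref{Thm1} and Theorem~\ref{LBrent}, with no further argument beyond the numerical substitution you describe. One small caution on the arithmetic: the rounded constant $b=0.059$ from the statement of Theorem~\ref{Thm1} actually gives $b/(6\pi)\approx 0.00313>0.0031$, so to justify the leading coefficient $0.0031$ you should invoke the sharper value $b(\tfrac{11}{10},\tfrac{3}{4})\approx 0.0585$ computed in \S\ref{TuringCalc} rather than the displayed $0.059$.
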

The above corollary shows that, in order to apply Turing's Method at height $g_{p} = 2\pi\cdot 10^{12}$, one needs to find only 6 Gram blocks in which the Rosser Rule is valid.
\subsection{Proof of Theorem \ref{Thm1}}\label{Proof of Theorem 1.2.2}
This section closely follows the structure of Lehman's refinement \cite{Lehman} of Turing's work \cite{Turing}. Some of the Lemmas are identical to those in these papers, and their proofs are deferred to \cite{Lehman}.
To begin, one rewrites the integral of the function $S(t)$ using the following 
\begin{lemma}\label{Ll1}
If $t_{2} >t_{1} >0$, then
\begin{equation}\label{110}
\pi\int_{t_{1}}^{t_{2}} S(t)\, dt =\Re\int_{\frac{1}{2}+it_{2}}^{\infty +it_{2}} \log \zeta(s) \, ds - \Re\int_{\frac{1}{2}+it_{1}}^{\infty +it_{1}} \log \zeta(s) \, ds.
\end{equation}
\end{lemma}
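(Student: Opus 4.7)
To prove (\ref{110}) I would apply Cauchy's theorem to $\log\zeta(s)$ around the rectangle $R_X$ with vertices $\tfrac12+it_1,\ X+it_1,\ X+it_2,\ \tfrac12+it_2$, traversed counterclockwise, with $X>1$ eventually sent to infinity.  The branch of $\log\zeta$ is the one obtained by continuous variation from the principal branch on $\sigma\geq 2$, where $|\zeta-1|<1$ and $\log\zeta(s)\to 0$ as $\sigma\to\infty$.  Any zero $\rho=\beta+i\gamma$ interior to $R_X$ is dealt with by slitting from $\rho$ horizontally to $X+i\gamma$ and indenting by a small circle of radius $\epsilon$: the circular arc contributes $O(\epsilon\log\epsilon)\to 0$ as $\epsilon\to 0$, while the branch jumps by $\pm 2\pi i m_\rho$ across the slit, giving a net slit contribution of $\pm 2\pi i m_\rho(X-\beta)$, which is purely imaginary.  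Hence $\Re\oint_{R_X}\log\zeta(s)\,ds=0$ irrespective of interior zeros.

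Expanding over the four edges, parametrising the horizontals by $\sigma$ and the verticals by $t$, and taking real parts gives
\begin{equation*}
\Re\!\int_{1/2}^{X}\!\log\zeta(\sigma+it_1)\,d\sigma-\Re\!\int_{1/2}^{X}\!\log\zeta(\sigma+it_2)\,d\sigma-\int_{t_1}^{t_2}\!\Im\log\zeta(X+it)\,dt+\int_{t_1}^{t_2}\!\Im\log\zeta(\tfrac12+it)\,dt=0.
\end{equation*}
The Dirichlet series gives $\log\zeta(X+it)=O(2^{-X})$, so the third term vanishes as $X\to\infty$; by definition (\ref{sdef}) the fourth term is $\pi\int_{t_1}^{t_2}S(t)\,dt$.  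Rearranging yields (\ref{110}).

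The main delicacy is to confirm that continuous variation of $\log\zeta$ along the contour agrees at $\tfrac12+it_j$ with the broken-line definition used in (\ref{sdef}).  This follows because the two paths from $+\infty+it_j$ to $2+it_j$ --- the horizontal one at height $t_j$ employed along my contour, and the ``L''-shape down to $2+i0$ and up to $2+it_j$ implicit in the broken-line definition --- are homotopic inside the zero-free half-plane $\sigma\geq 2$, and both then continue along the same horizontal segment from $2+it_j$ to $\tfrac12+it_j$, so they determine the same value of $\arg\zeta(\tfrac12+it_j)=\pi S(t_j)$.  Integrability of the integrands near zeros on the horizontal edges or the critical line is automatic since $\log|z|$ is locally integrable and $S(t)$ is bounded on any finite interval.
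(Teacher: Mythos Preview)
Your argument is precisely the proof of Littlewood's theorem applied to $\zeta$ with real parts taken, which is exactly what the paper invokes (via the reference to Lehman), so the approach is the same. One technical point worth tightening: with slits running \emph{rightward} from each $\rho$ to $X+i\gamma$, the single-valued branch in the slit domain is continuous along the left edge $\sigma=\tfrac12$, whereas $\pi S(t)$ jumps at every ordinate of a zero; your check of agreement only at the corners $t_{1},t_{2}$ does not by itself give $\Im\log\zeta(\tfrac12+it)=\pi S(t)$ for all $t$ on that edge. The standard cure (implicit in Littlewood's formulation) is to slit \emph{leftward} to $\tfrac12+i\gamma$: then the right edge carries the principal branch, every horizontal line $\Im s=t$ lies in the slit domain, and the branch at $\tfrac12+it$ is the horizontal continuation from large $\sigma$, i.e.\ $\pi S(t)$, for each $t$---after which your computation goes through verbatim.
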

\begin{proof}
This is Lemma 1 in \cite{Lehman}, and the proof is based on Littlewood's theorem for analytic functions, but more detail is supplied in \cite{LehmanOld}, or \cite[pp.\ 190-192]{Edwards}.
\end{proof}
Henceforth: Lemmas \ref{l2}--\ref{200} are used to bound the first integral on the right-hand side of (\ref{110}), and Lemmas \ref{l8}--\ref{l9} are needed to bound the second integral.

\begin{lemma}\label{l2}
If $t\geq 128\pi$, then
\begin{equation*}
|\zeta\left(\tfrac{1}{2} +it\right)| \leq 2.53\,  t^{\frac{1}{4}}.
\end{equation*}
\end{lemma}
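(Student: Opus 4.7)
The natural route is to apply the Riemann--Siegel formula, since the threshold $t \geq 128\pi$ is precisely the one Turing and Lehman adopt to ensure that the remainder in that formula is small. Writing $N = \lfloor \sqrt{t/(2\pi)} \rfloor$, one has
\begin{equation*}
\zeta(\tfrac{1}{2}+it) = \sum_{n=1}^{N} n^{-\frac{1}{2}-it} + \chi(\tfrac{1}{2}+it) \sum_{n=1}^{N} n^{-\frac{1}{2}+it} + R(t),
\end{equation*}
where $\chi(s) = \pi^{s-\frac{1}{2}} \Gamma\!\left(\tfrac{1-s}{2}\right)/\Gamma\!\left(\tfrac{s}{2}\right)$ and $R(t)$ is a remainder for which Titchmarsh (or, more sharply, Gabcke) gives an explicit bound of the form $|R(t)| \leq C_{0}\, t^{-1/4}$ valid once $t$ is above a modest threshold.

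From the functional equation, $|\chi(\tfrac{1}{2}+it)| = 1$, so the triangle inequality produces
\begin{equation*}
|\zeta(\tfrac{1}{2}+it)| \leq 2 \sum_{n=1}^{N} n^{-1/2} + |R(t)|.
\end{equation*}
The finite sum is then treated by integral comparison: $\sum_{n=1}^{N} n^{-1/2} \leq 1 + \int_{1}^{N} x^{-1/2}\, dx = 2\sqrt{N} - 1$, and since $N \leq (t/2\pi)^{1/2}$ one obtains
\begin{equation*}
2 \sum_{n=1}^{N} n^{-1/2} \leq 4 (2\pi)^{-1/4}\, t^{1/4} - 2,
\end{equation*}
with $4(2\pi)^{-1/4} = 2.5266\ldots$. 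The constant $2.53$ in the statement is obtained by exploiting the slack between $2.5266$ and $2.53$ together with the $-2$ term: one needs $|R(t)| \leq 0.0034\, t^{1/4} + 2$, which follows painlessly from the explicit bound on $R(t)$ once $t \geq 128\pi$ (so that $t^{1/4} \geq (128\pi)^{1/4}$ and the Riemann--Siegel error bound kicks in).

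The only genuinely non-routine ingredient is the explicit control of $R(t)$; the rest is elementary. I would therefore quote an existing explicit bound (e.g.\ Titchmarsh or Gabcke) rather than redo the saddle-point analysis, and then spend most of the write-up verifying the arithmetic inequality $4(2\pi)^{-1/4}\, t^{1/4} - 2 + |R(t)| \leq 2.53\, t^{1/4}$ for $t \geq 128\pi$. The choice of $128\pi$ as the cut-off is not essential to the method but simply the value at which the remainder becomes small enough for this elementary argument to close; the footnote's computational remark that the bound in fact persists all the way down to $t > 1$ is consistent with this, but a proof below the cut-off would require direct numerical verification rather than the Riemann--Siegel approach.
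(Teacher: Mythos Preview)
Your approach is essentially the same as the paper's: the proof there is simply a reference to Lehman, who derives the bound from the Riemann--Siegel formula together with Titchmarsh's explicit remainder estimate (with minor corrections), exactly as you outline. Your arithmetic with $4(2\pi)^{-1/4}=2.5266\ldots$ and the slack from the $-2$ absorbing $|R(t)|$ is the heart of that computation, so there is nothing to add.
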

\begin{proof}
See the argument in \cite{Lehman} where some corrections are given to Titchmarsh's explicit calculation of the error in the Riemann--Siegel formula.
\end{proof}
This estimate can certainly be improved insofar as reducing the exponent of $t$ is concerned. Currently the best bound on the growth of the zeta-function is due to Huxley \cite{Huxley}, viz.\ $\zeta(\frac{1}{2} +it) \ll t^{\alpha +\epsilon},$ where $\alpha = \frac{32}{205} \approx 0.1561$. However the methods used to attain this bound are complicated and the calculation of the implied constant would prove lengthy. The coarser, but simpler proof (see \cite[{Ch.\ V} \S 5]{Titchmarsh}) due to van der Corput yields
\begin{equation}\label{vdc}
|\zeta\left(\tfrac{1}{2} +it\right)| \leq At^{\frac{1}{6}} \log t,
\end{equation}
where the calculation of the constant $A$ is not too time consuming. Indeed following the arguments in \cite[{Chs IV-V}]{Titchmarsh} and using a result of Karatsuba \cite[Lem.\ 1]{KKapprox}, one can take $A\leq 20$. 

The logarithmic term in (\ref{vdc}) is relatively innocuous since, for a given $\eta>0$ one can then take $t_{0}$ so large that
\begin{equation*}
\log t \leq A' t^{\eta},
\end{equation*}
where $A' = A'(\eta, t_{0})$ can be easily computed, whence
\begin{equation*}
|\zeta\left(\tfrac{1}{2} +it\right)| \leq AA't^{\frac{1}{6} +\eta}.
\end{equation*}
Turing \cite[p.\ 108]{Turing} makes reference to the improvements made possible by these refined estimates on the growth of $\zeta(\frac{1}{2} +it)$. The following Lemmas will be written with 
\begin{equation}\label{currentl}
|\zeta\left(\tfrac{1}{2} +it\right)| \leq K t^{\theta},
\end{equation}
so that the benefit of such a refinement as that in (\ref{vdc}) can be seen clearly.

The bound on $\zeta(s)$ on the line $\sigma=\frac{1}{2}$ can be combined with that on the line $\sigma = c >1$, whence the Phragm\'{e}n--Lindel\"{o}f theorem can be applied throughout the strip $\frac{1}{2}\leq \sigma \leq c$. The papers of Turing and Lehman use the value $c=\frac{5}{4}$ and some improvement will be given later by choosing an optimal value of $c$ at the end of the proof. A result needed is
\begin{lemma}\label{l5}
Let $a, b, Q$ and $k$ be real numbers, and let $ f(s)$ be regular analytic in the strip $-Q\leq a\leq \sigma \leq b$ and satisfy the growth condition 
$$ |f(s)| <C\exp \left\{e^{k|t|}\right\},$$ for a certain $C>0$ and for $0<k<\pi/(b-a)$. Also assume that
\[|f(s)|\leq\left\{\begin{array}{ll}
A|Q+s|^{\alpha} &  \mbox{for $\Re(s) = a,$}\\
B|Q+s|^{\beta} & \mbox{for $\Re(s) = b$}\\
\end{array}
\right.\]
with $\alpha \geq \beta$. Then throughout the strip $a \leq \sigma \leq b$ the following holds
$$ |f(s)| \leq A^{(b-\sigma)/(b-a)}B^{(\sigma - a)/(b-a)}|Q+s|^{\alpha(b-\sigma)/(b-a) + \beta(\sigma-a)/(b-a)}.$$
\end{lemma}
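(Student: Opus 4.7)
The plan is to prove this Phragm\'{e}n--Lindel\"{o}f interpolation by applying the maximum principle to the subharmonic function
$$u(s) = \log|f(s)| - \frac{b-\sigma}{b-a}\log A - \frac{\sigma-a}{b-a}\log B - \frac{\alpha(b-\sigma)+\beta(\sigma-a)}{b-a}\log|Q+s|,$$
so that the conclusion of the lemma is simply the inequality $u \le 0$ throughout the strip. The two boundary hypotheses on $f$ give $u \le 0$ on $\sigma = a$ and on $\sigma = b$ immediately, while the growth condition on $f$ transfers to $u$ with only an additive $O(\log|t|)$ correction, yielding $u(s) = O(e^{k|t|})$ with $k < \pi/(b-a)$ --- exactly the tolerance the strip version of Phragm\'{e}n--Lindel\"{o}f for subharmonic functions accepts. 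Working at the level of $\log|f|$ rather than $f$ itself sidesteps the complex-exponent issues that arise when one tries to build an analytic auxiliary function $h$ with $|h|$ equal to the interpolated bound: any such $h$ would contain a stray factor $\exp[(\alpha-\beta)t\arg(Q+s)/(b-a)]$ pointing the wrong way in the interior of the strip.

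The one substantive point is to verify that $u$ is subharmonic in the open strip. The term $\log|f|$ is subharmonic because $f$ is holomorphic, the two linear-in-$\sigma$ terms are harmonic, and for the remaining term, writing $\gamma(\sigma) = \alpha + (\beta-\alpha)(\sigma-a)/(b-a)$, a short computation using $\gamma''=0$ and the harmonicity of $\log|Q+s|$ away from $s = -Q$ gives
$$\Delta\bigl[\gamma(\sigma)\log|Q+s|\bigr] = 2\gamma'(\sigma)\,\frac{Q+\sigma}{|Q+s|^{2}} = \frac{2(\beta-\alpha)(Q+\sigma)}{(b-a)|Q+s|^{2}} \le 0,$$
since $\alpha \ge \beta$ and $Q+\sigma \ge Q+a \ge 0$ throughout the strip. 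Hence $\gamma(\sigma)\log|Q+s|$ is superharmonic, and $u$ is the sum of a subharmonic function, a harmonic function, and the negative of a superharmonic function, so $\Delta u \ge 0$. Applying the Phragm\'{e}n--Lindel\"{o}f maximum principle for subharmonic functions in the strip then forces $u \le 0$ everywhere, and exponentiating is the lemma.

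The main obstacle is invoking the precise subharmonic version of Phragm\'{e}n--Lindel\"{o}f for a vertical strip with the double-exponential growth tolerance $\exp(e^{k|t|})$, $k<\pi/(b-a)$; this is a modest adaptation of the classical analytic statement (see Titchmarsh, \emph{Theory of Functions}, \S5.65), obtained via the conformal change of variable $w = \exp(\pi s/(b-a))$ together with an auxiliary damping factor of the form $\exp(-\eta\cosh(k's))$ for some $k<k'<\pi/(b-a)$ and $\eta\to 0^{+}$. A minor technicality arises in the extreme case $Q+a=0$, where $\log|Q+s|$ has a logarithmic singularity at the isolated boundary point $s=-Q$; but there the boundary hypothesis forces $f$ to vanish to order at least $\alpha$, so $u$ remains bounded above near that point and subharmonicity is undisturbed.
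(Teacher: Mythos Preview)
Your argument is correct. Note, however, that the paper itself does not prove this lemma: it simply cites Rademacher, \emph{Topics in Analytic Number Theory}, pp.~66--67, so there is no in-paper argument to compare against line by line.

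Your approach is nonetheless different in spirit from Rademacher's. Rademacher stays on the analytic side, constructing an auxiliary holomorphic comparison function and invoking the classical Phragm\'en--Lindel\"of theorem for holomorphic functions; the complication you correctly flag --- that any analytic $h(s)$ with $|h(s)|=|Q+s|^{\gamma(\sigma)}$ necessarily carries an extra factor $\exp\bigl[(\alpha-\beta)\,t\arg(Q+s)/(b-a)\bigr]$ --- is dealt with there by absorbing this $\exp(O(|t|))$ term into the growth hypothesis and removing it by a limiting device. Your decision to work instead with the subharmonic function $u$ bypasses that bookkeeping entirely: the one-line computation
\[
\Delta\bigl[\gamma(\sigma)\log|Q+s|\bigr]=\frac{2(\beta-\alpha)(Q+\sigma)}{(b-a)\,|Q+s|^{2}}\le 0
\]
makes transparent precisely why the two structural hypotheses $\alpha\ge\beta$ and $-Q\le a$ are the right ones, at the modest cost of appealing to the subharmonic (rather than holomorphic) version of the strip maximum principle. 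Your treatment of the corner case $Q+a=0$ is also sound: the boundary bound forces $f$ to vanish at $s=-Q$ to order at least $\alpha$, so $u$ stays bounded above there. Either route is standard; yours is arguably the more self-explanatory of the two.
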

\begin{proof}
See \cite[pp.\ 66-67]{Rad}.
\end{proof}
Take $Q=0;\; a=\frac{1}{2};\; b=c;\; f(s) = (s-1)\zeta(s)$, whence all the conditions of Lemma \ref{l5} are satisfied. Then on the line $\sigma = \frac{1}{2}$ it follows that
\begin{equation*}
|f(s)| \leq Kt^{\theta} |s-1| \leq K|s|^{\theta+1},
\end{equation*}
by virtue of (\ref{currentl}). On the line $\sigma = c$,
\begin{equation*}
|f(s)| \leq |s-1|\zeta(c) \leq \zeta(c) |s|,
\end{equation*}
since $c>1$. So one can take $A= K; \; \alpha = \theta +1;\; B=\zeta(c);\; \beta = 1$ and then apply Lemma \ref{l5} to obtain,
\begin{equation}\label{1.16}
|(s-1)\zeta(s)| \leq\left[ K^{c-\sigma}\{\zeta(c)\}^{\sigma-\frac{1}{2}} |s|^{\theta(c-\sigma) + c-\frac{1}{2}}\right]^{1/(c-\frac{1}{2})}.
\end{equation}
For sufficiently large $t$, let $C_{1}$ and $C_{2}$ be numbers satisfying
\begin{equation*}
|s-1|\geq C_{1}|s|; \qquad |s| \leq C_{2} |t|.
\end{equation*}
When $t>168\pi$ one can take $C_{1}^{-1} \geq 1 +\delta$ and $C_{2} \leq 1+\delta$, where $\delta = 2\cdot 10^{-6}$. This gives an estimate on the growth of $\zeta(s)$ in terms of $t$ only, and, together with (\ref{1.16}) proves
\begin{lemma}\label{l44}
Let $K,\theta$ and $t_{0}$ satisfy the relation that $|\zeta(\frac{1}{2}+it)| \leq Kt^{\theta}$ whenever $t>t_{0}>168\pi$. Also, let $\delta = 2\cdot 10^{-6}$ and let $c$ be a parameter satisfying $1<c\leq \frac{5}{4}$. Then throughout the region $\frac{1}{2}\leq \sigma\leq c$ the following estimate holds
\begin{equation*}
|\zeta(s)| \leq (1+\delta)\left\{K^{c-\sigma} \{\zeta(c)\}^{\sigma-\frac{1}{2}} ((1+\delta)\, t)^{\theta(c-\sigma)}\right\}^{1/(c-\frac{1}{2})}.
\end{equation*}
\end{lemma}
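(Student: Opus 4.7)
The plan is to apply the Phragm\'{e}n--Lindel\"{o}f principle (Lemma \ref{l5}) to $f(s)=(s-1)\zeta(s)$ on the strip $\frac{1}{2}\le\sigma\le c$, using the two boundary bounds already recorded in the excerpt, and then to convert the resulting estimate---which naturally involves $|s|$---into one expressed solely in terms of $t$.

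First I would verify that the hypotheses of Lemma \ref{l5} are met: $(s-1)\zeta(s)$ is entire, and the classical convexity bound implies it grows polynomially in $|t|$ on each vertical line in the strip, so it certainly satisfies the double-exponential growth condition $|f(s)|<C\exp\{e^{k|t|}\}$ for any $k>0$. The two boundary bounds $|f(\frac{1}{2}+it)|\le K|s|^{\theta+1}$ and $|f(c+it)|\le\zeta(c)|s|$ have already been noted in the excerpt, via (\ref{currentl}), the Euler-product bound $|\zeta(c+it)|\le\zeta(c)$, and the inequality $|s-1|\le|s|$ valid for $\sigma\ge\frac{1}{2}$. Lemma \ref{l5} applied with $Q=0$, $a=\frac{1}{2}$, $b=c$, $A=K$, $\alpha=\theta+1$, $B=\zeta(c)$, $\beta=1$ then delivers the inequality (\ref{1.16}).

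Next I would divide both sides of (\ref{1.16}) by $|s-1|$ and simplify. After distributing the $1/(c-\frac{1}{2})$-th power through the brackets, the exponent of $|s|$ becomes $1+\theta(c-\sigma)/(c-\frac{1}{2})$; I would peel off one factor of $|s|$ to combine with $1/|s-1|$, bounded by $|s|/|s-1|\le 1+\delta$, and absorb the remaining $|s|^{\theta(c-\sigma)/(c-\frac{1}{2})}$ via $|s|\le(1+\delta)\,t$. The constants $C_1$ and $C_2$ are the extremal values of $|s-1|/|s|$ and $|s|/t$ over $\frac{1}{2}\le\sigma\le c\le\frac{5}{4}$, $t\ge168\pi$; from $|s|^{2}-|s-1|^{2}=2\sigma-1$ and $|s|^{2}-t^{2}=\sigma^{2}$ both ratios are of the form $1+O(t^{-2})$ with $t^{-2}\le(168\pi)^{-2}$, so a direct numerical check shows the choice $\delta=2\cdot10^{-6}$ is admissible.

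The only substantive obstacle is the careful bookkeeping of exponents in the convexity step, together with the numerical verification of the small constants $C_1$ and $C_2$; otherwise the argument is a standard Phragm\'{e}n--Lindel\"{o}f application, tuned only so that the explicit constants eventually appearing in Theorem \ref{Thm1} can be tracked cleanly.
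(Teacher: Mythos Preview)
Your proposal is correct and follows essentially the same route as the paper: apply Lemma~\ref{l5} to $f(s)=(s-1)\zeta(s)$ with $Q=0$, $a=\tfrac12$, $b=c$, $A=K$, $\alpha=\theta+1$, $B=\zeta(c)$, $\beta=1$ to obtain (\ref{1.16}), then divide through by $|s-1|$ and replace $|s|/|s-1|$ and $|s|/t$ by $1+\delta$ using the elementary inequalities valid for $\tfrac12\le\sigma\le\tfrac54$ and $t>168\pi$. The only difference is cosmetic --- you spell out the splitting of the $|s|$-exponent a bit more explicitly than the paper does.
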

Now, in the integral
\begin{equation*}\label{convo}
\int_{\frac{1}{2} +it}^{\infty+it} \log |\zeta(s)| \, ds
\end{equation*}
one seeks to apply the convexity bound of Lemma \ref{l44} over the range $\frac{1}{2}\leq\sigma\leq c$, and to trivially estimate $\zeta(s)$ for $\sigma >c$. To this end, write
\begin{equation*}
\int_{\frac{1}{2} +it}^{\infty+it} \log |\zeta(s)| \, ds = \int_{\frac{1}{2}+ i t}^{c + it} \log |\zeta(s)|\, ds + m(c),
\end{equation*}
where
\begin{equation*}
m(c) := \int_{c+ i t}^{\infty + it} \log |\zeta(s)|\, ds \leq \int_{c}^{\infty} \log |\zeta(\sigma)|\, d\sigma,
\end{equation*}
since $c>1$. The application of Lemma \ref{l44} proves
\begin{lemma}\label{200}
Under the same assumptions as Lemma \ref{l44}, the following estimate holds,
\begin{equation*}
\Re\int_{\frac{1}{2}+it}^{\infty+i t} \log\zeta(s)\, ds < a_{1} + b_{1}\log t,
\end{equation*}
where
\begin{equation*}
a_{1} = \int_{c}^{\infty}\log|\zeta(\sigma)|\, d\sigma + \frac{1}{2}\left(c-\tfrac{1}{2}\right)\log \left\{K\zeta(c)\right\} + \delta,
\end{equation*}
and
\begin{equation*}
b_{1} =\frac{\theta}{2}(c-\tfrac{1}{2}).
\end{equation*}
\end{lemma}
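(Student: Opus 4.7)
The plan is to split the integral at $\sigma=c$ exactly as indicated in the lead-up to the lemma, handle the tail trivially because $c>1$, and handle the middle strip $\frac{1}{2}\le\sigma\le c$ by integrating the pointwise convexity bound supplied by Lemma \ref{l44}. Since $\Re\log\zeta(s)=\log|\zeta(s)|$, the left-hand side equals $\int_{1/2}^{c}\log|\zeta(\sigma+it)|\,d\sigma + m(c)$, and the tail $m(c)$ is already bounded by $\int_{c}^{\infty}\log|\zeta(\sigma)|\,d\sigma$, which is the first term of $a_{1}$.

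For the middle strip, I would take logarithms in Lemma \ref{l44} to obtain, for $\frac{1}{2}\le\sigma\le c$,
\begin{equation*}
\log|\zeta(\sigma+it)| \le \log(1+\delta) + \frac{(c-\sigma)\log K + (\sigma-\tfrac{1}{2})\log\zeta(c) + \theta(c-\sigma)\log((1+\delta)t)}{c-\tfrac{1}{2}},
\end{equation*}
and then integrate term by term from $\sigma=\tfrac{1}{2}$ to $\sigma=c$. The key observation is that $\int_{1/2}^{c}(c-\sigma)\,d\sigma = \int_{1/2}^{c}(\sigma-\tfrac{1}{2})\,d\sigma = \tfrac{1}{2}(c-\tfrac{1}{2})^{2}$, so after dividing by $c-\tfrac{1}{2}$ one obtains a factor of $\tfrac{1}{2}(c-\tfrac{1}{2})$ multiplying each of $\log K$, $\log\zeta(c)$, and $\theta\log((1+\delta)t)$. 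Combining the first two yields $\tfrac{1}{2}(c-\tfrac{1}{2})\log\{K\zeta(c)\}$, and the coefficient of $\log t$ in the last one is precisely $b_{1}=\tfrac{\theta}{2}(c-\tfrac{1}{2})$.

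What remains are the $\log(1+\delta)$ pieces: one contribution $(c-\tfrac{1}{2})\log(1+\delta)$ from the additive term, plus $\tfrac{\theta}{2}(c-\tfrac{1}{2})\log(1+\delta)$ coming from splitting $\log((1+\delta)t) = \log(1+\delta)+\log t$. Using $\log(1+\delta)\le\delta$, the total error contributed is at most $(c-\tfrac{1}{2})(1+\tfrac{\theta}{2})\delta$, and this is bounded by $\delta$ provided $(c-\tfrac{1}{2})(1+\tfrac{\theta}{2})\le 1$, which is easily verified in the stated range $1<c\le\tfrac{5}{4}$ (since $\theta$ is small). This absorption into the solitary $\delta$ in the definition of $a_{1}$ is the only delicate book-keeping step; everything else is direct integration.

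I do not anticipate a genuine obstacle: the hard analytic work — the bound $|\zeta(\tfrac{1}{2}+it)|\le Kt^{\theta}$, the Phragm\'en–Lindel\"of convexity, and the replacement of $|s|$ by $(1+\delta)t$ with $|s-1|^{-1}$ by $(1+\delta)|s|^{-1}$ — has already been done in Lemmas \ref{l2}–\ref{l44}. The proof here is essentially a calibration: write out the convexity estimate, integrate two linear functions of $\sigma$ over $[\tfrac{1}{2},c]$, add the tail $m(c)$, and verify that the $\log(1+\delta)$ slack fits inside the stated $\delta$.
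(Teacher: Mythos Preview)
Your proposal is correct and follows exactly the approach the paper indicates: split at $\sigma=c$, bound the tail $m(c)$ trivially by $\int_c^\infty\log|\zeta(\sigma)|\,d\sigma$, and on $[\tfrac{1}{2},c]$ integrate the logarithm of the convexity bound from Lemma~\ref{l44}, using $\int_{1/2}^{c}(c-\sigma)\,d\sigma=\int_{1/2}^{c}(\sigma-\tfrac{1}{2})\,d\sigma=\tfrac{1}{2}(c-\tfrac{1}{2})^{2}$. Your check that $(c-\tfrac{1}{2})(1+\tfrac{\theta}{2})\log(1+\delta)\le\delta$ (which holds since $c\le\tfrac{5}{4}$ and, for the value $\theta=\tfrac{1}{4}$ in use, $(c-\tfrac{1}{2})(1+\tfrac{\theta}{2})\le\tfrac{27}{32}<1$) is the only book-keeping the paper leaves implicit.
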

The improvements in the following lemmas come from writing $\zeta(s+d)$ in place of $\zeta(s+1)$ which is used in the methods of Turing and Lehman. One then seeks the optimal value of $d\leq1$ at the end of the proof. Write
\begin{equation}\label{213}
\begin{split}
\Re\int_{\frac{1}{2}+it}^{\infty+it} \log \zeta(s)\, ds &= \int_{\frac{1}{2}+it}^{\frac{1}{2}+d+it}\log\bigg|\frac{\zeta(s)}{\zeta(s+d)}\bigg|\, ds\\ &+ \int_{\frac{1}{2}+d+it}^{\infty +it} \log |\zeta(s)|\, ds + \int_{\frac{1}{2}+d+it}^{\frac{1}{2}+2d+it} \log|\zeta(s)|\, ds,
\end{split}
\end{equation}
where $\frac{1}{2}<d\leq 1$. Since $d>\frac{1}{2}$ then $\Re (s) >1$ in the second and third integrals on the right side of the above equation. Thus, $\zeta(s)\geq \zeta(2\sigma)/\zeta(\sigma)$, so that, after suitable changes of variables, (\ref{213}) becomes
\begin{equation}\label{lattc}
\Re\int_{\frac{1}{2}+it}^{\infty+it}\log\zeta(s)\, ds \geq \int_{\frac{1}{2}+it}^{\frac{1}{2}+d+it}\log\bigg|\frac{\zeta(s)}{\zeta(s+d)}\bigg|\, ds + I(d),
\end{equation}
where
\begin{equation}\label{defineI}
\begin{split}
I(d) &= \frac{1}{2}\int_{1+2d}^{\infty}\log\zeta(\sigma)\, d\sigma - \int_{\frac{1}{2} +d}^{\infty}\log\zeta(\sigma)\, d\sigma\\
&+ \frac{1}{2}\int_{1+2d}^{1+4d}\log\zeta(\sigma)\, d\sigma - \int_{\frac{1}{2}+d}^{\frac{1}{2} +2d}\log\zeta(\sigma)\, d\sigma,
\end{split}
\end{equation}
and these integrals, all convergent, will be evaluated at the end of the proof.
The integrand the right side of (\ref{lattc}) can be rewritten\footnote{This method of approach is slightly easier than that given in Turing's paper, as noted by Lehman \cite[p.\ 310]{Lehman}.} using the Weierstrass product formula cf.\ \cite[pp.\ 82-83]{Davenport}
\begin{equation}\label{riewei}
\zeta(s) = \frac{e^{bs}}{2(s-1)\Gamma(1+\frac{s}{2})}\prod_{\rho}\left(1-\frac{s}{\rho}\right)e^{s/\rho},
\end{equation}
where the product is taken over zeroes $\rho$ and $b$ is a constant such that
\begin{equation*}\label{Davestyle}
b =\frac{1}{2}\log \pi -\sum_{\rho}\frac{1}{\rho},
\end{equation*}
when the sum converges if each zero is paired with its conjugate.
Thus
\begin{equation*}
\begin{split}
\log\bigg|\frac{\zeta(s)}{\zeta(s+d)}\bigg| &= \log\bigg|\frac{s+d-1}{s-1}\bigg|  - \log\bigg|\frac{\Gamma(\frac{s}{2} +1)}{\Gamma(\frac{s}{2} +1+\frac{d}{2})}\bigg| \\
&+\sum_{\rho}\log\bigg|\frac{s-\rho}{s+d-\rho}\bigg| - \frac{d}{2}\log \pi,
\end{split}
\end{equation*}
and so it follows that
\begin{equation}\label{L9final}
\begin{split}
\Re\int_{\frac{1}{2}+it}^{\infty+it} \log \zeta(s)\, ds &\geq  \sum_{\rho}\int_{\frac{1}{2}+it}^{\frac{1}{2}+d+it}\log\bigg |\frac{s-\rho}{s+d-\rho}\bigg |\, ds\\ 
&- \int_{\frac{1}{2}+it}^{\frac{1}{2}+d+it}\log \bigg | \frac{\Gamma(\frac{s}{2}+1)}{\Gamma(\frac{s+d}{2} +1)}\bigg |\, ds\\ &+ \int_{\frac{1}{2}+it}^{\frac{1}{2}+d+it} \log \bigg |\frac{s+d-1}{s-1}\bigg|\, ds +I(d) - \frac{d^{2}}{2}\log\pi \\& =I_{1} - I_{2} + I_{3} + I(d) - \frac{d^{2}}{2}\log \pi.
\end{split}
\end{equation}
The following Lemmas are needed for evaluation of $I_{1}$ and $I_{2}$. Since $\frac{1}{2}<d\leq 1$, it is easily seen that $I_{3}\geq 0$ but since the argument of the logarithm tends to one as $t \rightarrow\infty$, no further improvements are possible.
To estimate the integral $I_{2}$ the following result is required, which is a quantitative version of the classical estimate
\begin{equation*}
\frac{\Gamma'(z)}{\Gamma(z)} = \log z + O\left(\frac{1}{z}\right),
\end{equation*}
see, e.g.\ \cite[Ch.\ XII]{WW}.
\begin{lemma}\label{l8}
Define the symbol $\Theta$ in the following way: $f(x) = \Theta\{g(x)\}$ means that $|f(x)|\leq g(x)$. If $\Re z > 0$, then
\begin{equation*}
\frac{\Gamma '(z)}{\Gamma (z)} = \log z - \frac{1}{2 z} + \Theta\left(\frac{2}{\pi^{2} | (\Im z)^{2} - (\Re z)^{2}|}\right).
\end{equation*}
\end{lemma}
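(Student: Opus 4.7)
I would start from Binet's second integral representation of $\log\Gamma$, valid for $\Re z > 0$:
\[
\log\Gamma(z) = \left(z-\tfrac{1}{2}\right)\log z - z + \tfrac{1}{2}\log(2\pi) + 2\int_0^\infty \frac{\arctan(t/z)}{e^{2\pi t}-1}\,dt,
\]
a standard formula found in Whittaker--Watson \cite{WW} (already cited in the excerpt). Differentiating both sides in $z$ (legitimate by dominated convergence, since $\partial_z\arctan(t/z) = -t/(z^2+t^2)$ is uniformly integrable against $(e^{2\pi t}-1)^{-1}$ on compact subsets of the right half-plane) yields
\[
\frac{\Gamma'(z)}{\Gamma(z)} = \log z - \frac{1}{2z} - 2\int_0^\infty \frac{t\,dt}{(z^2+t^2)(e^{2\pi t}-1)},
\]
so the lemma reduces to the inequality $|I| \le 1/(\pi^2\,|(\Im z)^2-(\Re z)^2|)$, where $I$ denotes the above integral.

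The crux is a Stirling-type remainder estimate of the shape
\[
|I| \le \frac{\sec^2(\arg z/2)}{24\,|z|^2} \;=\; \frac{1}{12\,|z|\,(|z|+\Re z)}.
\]
I would establish this by integration by parts, pairing the antiderivative $\tfrac{1}{2}\log(1+t^2/z^2)$ of $t/(z^2+t^2)$ (which vanishes at $t=0$) with the derivative $-\pi/(2\sinh^2\pi t)$ of $(e^{2\pi t}-1)^{-1}$; after checking that the boundary terms vanish, one obtains
\[
I = \frac{\pi}{4}\int_0^\infty \frac{\log(1+t^2/z^2)}{\sinh^2(\pi t)}\,dt.
\]
The integrand is then bounded on two ranges: for $t \le |z|/\sqrt{2}$ the elementary estimate $|\log(1+w)|\le 2|w|$ (valid on $|w|\le 1/2$) gives $|\log(1+t^2/z^2)|\le 2t^2/|z|^2$, while on the tail $t > |z|/\sqrt{2}$ one uses the coarser $|\log(1+t^2/z^2)|\le \log|1+t^2/z^2|+\pi$ combined with the exponential decay of $\sinh^{-2}(\pi t)$. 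The standard evaluation $\int_0^\infty t^2/\sinh^2(\pi t)\,dt = 1/(6\pi)$ (an immediate consequence of $\sinh^{-2}(\pi t) = 4\sum_{n\ge 1}ne^{-2\pi n t}$ and $\zeta(2) = \pi^2/6$) then delivers the stated bound.

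From this Stirling estimate the lemma follows in one line: $|z|(|z|+\Re z) \ge |z|^2 = (\Re z)^2+(\Im z)^2 \ge |(\Im z)^2 - (\Re z)^2|$ trivially (regardless of the sign of $(\Im z)^2 - (\Re z)^2$), so $|I| \le 1/(12\,|y^2-x^2|)$, and since $\pi^2<12$ this is $\le 1/(\pi^2\,|y^2-x^2|)$; doubling gives the factor $2/\pi^2$ in the statement.

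The principal obstacle is carrying out the Stirling remainder bound with the sharp factor $\sec^2(\arg z/2)$: in the regime $|\Im z|\gg|\Re z|$ the argument $\arg z$ is close to $\pm\pi/2$ and the factor is close to $2$, so the estimate is essentially tight. In exactly this regime $1+t^2/z^2$ approaches zero near $t = \sqrt{y^2-x^2}$, producing a logarithmic singularity in $|\log(1+t^2/z^2)|$; this singularity is integrable against $\sinh^{-2}(\pi t)$---which is exponentially small there---but pinning down the numerical constants requires a careful split of the tail, conveniently handled via the factorization $|1+t^2/z^2|^2 = ((y+t)^2+x^2)((y-t)^2+x^2)/|z|^4$ coming from $z^2+t^2 = (t+iz)(t-iz)$.
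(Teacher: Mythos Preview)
The paper does not actually prove this lemma; it simply cites Lehman \cite{Lehman}, so there is no in-paper argument to compare against directly. Your route via Binet's second formula is a perfectly standard and correct way to reach the result, and the final deduction is clean: once one has the classical Stirling remainder bound
\[
|I| \;\le\; \frac{\sec^{2}(\tfrac{1}{2}\arg z)}{24\,|z|^{2}} \;=\; \frac{1}{12\,|z|\,(|z|+\Re z)}
\qquad (\Re z>0),
\]
the inequalities $|z|(|z|+\Re z)\ge |z|^{2}\ge |(\Im z)^{2}-(\Re z)^{2}|$ and $\pi^{2}<12$ give the lemma immediately. This bound is a textbook fact (it is in Whittaker--Watson and in Olver/DLMF), so if you are content to quote it, your proof is complete and arguably more informative than the paper's bare citation.

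Where your sketch is genuinely incomplete is in your attempt to \emph{prove} the Stirling bound by the IBP-and-split argument. Two points are worth flagging. First, your target is sharper than your method delivers: on the range $t\le |z|/\sqrt{2}$ the estimate $|\log(1+t^{2}/z^{2})|\le 2t^{2}/|z|^{2}$ together with $\int_{0}^{\infty}t^{2}\sinh^{-2}(\pi t)\,dt=1/(6\pi)$ already yields the full $1/(12|z|^{2})$, which is \emph{larger} than $1/(12|z|(|z|+\Re z))$; so the sharp $\sec^{2}$ constant cannot emerge from this split. Second --- and this is the real gap you yourself identify --- when $\Re z\ll |\Im z|$ the factor $\log(1+t^{2}/z^{2})$ has a genuine near-singularity at $t\approx\sqrt{y^{2}-x^{2}}$, and your tail discussion does not pin down a numerical constant. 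The good news is that you do not need the sharp constant at all: your own chain of inequalities only uses the consequence $|I|<1/(12|z|^{2})$ (equivalently $\sec^{2}(\tfrac{1}{2}\arg z)<2$ for $|\arg z|<\pi/2$), and any bound $|I|\le C/|z|^{2}$ with $C<1/\pi^{2}\approx 0.1013$ suffices. That leaves a margin of roughly $0.018/|z|^{2}$ for the tail, which is comfortably attainable for $|z|$ bounded away from zero; making it uniform for all $\Re z>0$ is the step that still needs to be written out carefully --- or bypassed by citing the Stirling remainder estimate directly.
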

\begin{proof}
See \cite[Lem.\ 8]{Lehman}.
\end{proof}
Using the mean-value theorem for integrals, $I_{2}$ can be written as
\begin{equation*}
I_{2} = -\frac{1}{2}\int_{\frac{1}{2} +it}^{\frac{1}{2}+d+it}\left\{\int_{0}^{d} \Re \frac{\Gamma '\left(1+\frac{s+\xi}{2}\right)}{\Gamma\left(1+\frac{s+\xi}{2}\right)}\, d\xi\right\}\, ds = -\frac{d^{2}}{2} \Re \frac{\Gamma '\left(\sigma +\frac{it}{2}\right)}{\Gamma\left(\sigma +\frac{it}{2}\right)},
\end{equation*}
for some $\sigma: \frac{5}{4} <\sigma < d+ \frac{5}{4}$, whence by Lemma \ref{l8} 
\begin{equation}\label{finalI2}
I_{2} = -\frac{d^{2}}{2}\log \frac{t}{2} + \epsilon,
\end{equation}
where $|\epsilon|$ is comfortably less than $9\cdot 10^{-5}$ when $t>168\pi$ and decreases rapidly with increasing $t$.

In \cite{Turing}, the integrand in $I_{1}$ is evaluated using an approximate solution to a differential equation. This is then summed over the zeroes $\rho$. Using the fact that if $\rho = \beta + i\gamma$ lies off the critical line, then so too does $1-\overline{\rho}$, Booker \cite{Booker} was able to sharpen the bound on $I_{1}$. His result is given in the $d=1$ case of the following
\begin{lemma}[Booker]\label{l7}
Given a complex number $w$ with $|\Re(w)|\leq \frac{1}{2}$ then for $\frac{1}{2}< d\leq 1$ the following holds
\begin{equation*}
\int_{0}^{d} \log \bigg |\frac{(x+d+w)(x+d-\overline{w})}{(x+w)(x-\overline{w})}\bigg|\, dx \leq d^2 (\log 4) \Re\left(\frac{1}{d+w} + \frac{1}{d-\overline{w}}\right).
\end{equation*}
\end{lemma}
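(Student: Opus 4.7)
The plan is to obtain a closed-form expression for the left-hand side by explicit integration, and then to establish the inequality by exploiting the symmetries of both sides about $w=0$.

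First I would split the integrand as
\begin{equation*}
\log\bigg|\frac{x+d+w}{x+w}\bigg| + \log\bigg|\frac{x+d-\overline{w}}{x-\overline{w}}\bigg|,
\end{equation*}
and evaluate each piece using $\int \log(x+c)\,dx = (x+c)\log(x+c) - x$. Summing and carefully tracking the branch of the logarithm (in particular, $\log(-w) - \log w = \mp i\pi$ according to the sign of $\Im w$), the left-hand side reduces to the closed form
\begin{equation*}
\mathrm{LHS} = \Re\psi(w) - \pi|\Im w|,
\end{equation*}
where
\begin{equation*}
\psi(w) = (2d+w)\log(2d+w) + (2d-w)\log(2d-w) - 2(d+w)\log(d+w) - 2(d-w)\log(d-w).
\end{equation*}
Since $\psi(-w) = \psi(w)$ and $\psi(\overline{w}) = \overline{\psi(w)}$, both sides of the target inequality are even in $\Re w$ and in $\Im w$, so I may restrict to $w = u + iv$ with $u,v\geq 0$.

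Next I would identify the equality case. Evaluating at $w=0$ (where the apparent singularity of the integrand at $x=0$ is integrable) gives $\mathrm{LHS} = 2d\log 4 = \mathrm{RHS}$, an equality that traces back to the elementary identity $\int_0^1 \log(1+1/x)\,dx = \log 4$ and explains the appearance of the $\log 4$ factor. A local analysis around $w=0$ then confirms $F := \mathrm{RHS} - \mathrm{LHS} \geq 0$ in a neighborhood: one checks $\partial_u F(0,0)=0$ and $\partial_u^2 F(0,0) = (4\log 4 + 3)/d > 0$, while the one-sided derivative $\partial_v^+ F(0,0) = \pi$ is strictly positive owing to the kink introduced by the $\pi|\Im w|$ term.

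The main obstacle is establishing $F\geq 0$ globally on the strip $\{0 \leq u \leq \tfrac{1}{2},\ v\geq 0\}$. As $v \to \infty$ both sides decay like $v^{-2}$, with explicit asymptotics $\mathrm{LHS} \sim 2d^3/v^2$ and $\mathrm{RHS} \sim 2d^3\log 4/v^2$, so $F > 0$ at infinity; along the edge $u = \tfrac{1}{2}$ one can verify the bound directly from the closed form above. The interior case requires ruling out a spurious sign change of $F$, which I would attempt via a Hessian analysis of $\Re\psi$ on the full strip, showing that any critical point of $F$ must reduce to $w=0$ or to the boundary. If the convexity approach proves unwieldy, the natural fallback is Booker's original argument \cite{Booker} for the $d=1$ case, which adapts with only minor changes to the parameter range $\tfrac{1}{2} < d \leq 1$.
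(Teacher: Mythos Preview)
The paper does not supply a self-contained proof of this lemma: it simply cites Booker's Lemma~4.4 for the case $d=1$ and asserts that the adaptation to $\tfrac{1}{2}<d\leq 1$ is straightforward. Your fallback at the end of the proposal is therefore already the paper's entire argument.

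Your attempt to go further and give a direct proof is more ambitious than what the paper does, and the preliminary steps are sound: the closed form $\mathrm{LHS}=\Re\psi(w)-\pi|\Im w|$ is correct, the symmetries are genuine, the equality at $w=0$ is real, and the asymptotics as $v\to\infty$ are right. However, you correctly identify the gap yourself: the global interior inequality on the strip is not established, and the sketched ``Hessian analysis of $\Re\psi$'' is not a proof but a hope. Harmonicity of $\Re\psi$ away from the branch points means a pure convexity argument in both variables cannot work as stated, so this step would need a different idea (e.g.\ a monotonicity argument in one variable for each fixed value of the other, or a maximum-principle argument tailored to the non-harmonic piece $-\pi|\Im w|$). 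Absent that, your proposal reduces to the paper's citation of Booker, which is fine but should be stated as such rather than presented as a backup.
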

\begin{proof}
The proof for $d=1$ is given as Lemma 4.4 in \cite{Booker}. The adaptation to values of $d$ such that $\frac{1}{2}<d\leq 1$ is straightforward.
\end{proof}
Write
\begin{equation*}
I_{1} = \sum_{\rho}\int_{0}^{d}\log\bigg|\frac{\sigma +\frac{1}{2}+it-\rho}{\sigma+d + \frac{1}{2} + it-\rho}\bigg|\, d\sigma,
\end{equation*}
and apply Lemma \ref{l7} with $w=\frac{1}{2}+it-\rho$, pairing together $\rho$ and $1-\overline{\rho}$, whence
\begin{equation*}
I_{1} \geq -d^{2}(\log 4)\sum_{\rho}\Re\left(\frac{1}{\frac{1}{2}+d+it-\rho}\right).
\end{equation*}
Here the improvement of Booker's result is seen, as Lehman [\textit{op.\ cit.}]\ has $1.48$ in the place of $\log 4 \approx 1.38$. Rather than appealing to the Mittag-Leffler series for $\zeta(\frac{1}{2}+it)$ as in \cite{Lehman}, here one proceeds directly by rewriting the sum over the zeroes using the Weierstrass product (\ref{riewei}). By logarithmically differentiating (\ref{riewei}) and taking real parts, it is seen that
\begin{equation*}
\begin{split}
-I_{1} \leq d^{2}(\log 4)\bigg\{&\Re\frac{\zeta' \left(\frac{1}{2}+d+it\right)}{\zeta\left(\frac{1}{2}+d+it\right)} + \frac{d-\frac{1}{2}}{(d-\frac{1}{2})^{2} + t^{2}}\\
&+\frac{1}{2}\Re\frac{\Gamma'\left(\frac{1}{2}+\frac{\frac{1}{2} +d+it}{2}\right)}{\Gamma\left(\frac{1}{2}+\frac{\frac{1}{2} +d+it}{2}\right)} - \frac{1}{2}\log\pi\bigg\},
\end{split}
\end{equation*}
and thus, using Lemma \ref{l8} with $t>168\pi$ one has
\begin{equation*}
-I_{1} \leq d^{2}(\log 4)\left\{\Re\frac{\zeta'\left(\frac{1}{2}+d+it\right)}{\zeta\left(\frac{1}{2}+d+it\right)} + \frac{1}{2}\log\frac{t}{2} -\frac{1}{2}\log\pi + \epsilon'\right\},
\end{equation*}
where $|\epsilon'| \leq 10^{-4}$.
Finally, since $d>\frac{1}{2}$ then
\begin{equation*}
\Re\frac{\zeta'\left(\frac{1}{2}+d+it\right)}{\zeta\left(\frac{1}{2}+d+it\right)} \leq \bigg|\frac{\zeta'\left(\frac{1}{2}+d+it\right)}{\zeta\left(\frac{1}{2}+d+it\right)}\bigg| \leq -\frac{\zeta'\left(\frac{1}{2}+d\right)}{\zeta\left(\frac{1}{2}+d\right)},
\end{equation*}
and so
\begin{equation}\label{finalI1}
-I_{1} \leq d^{2} (\log 4)\left\{-\frac{\zeta'\left(\frac{1}{2}+d\right)}{\zeta\left(\frac{1}{2}+d\right)} + \frac{1}{2}\log  t -\frac{1}{2}\log{2\pi} +\epsilon'\right\}.
\end{equation}

The results for $I_{1}$ and $I_{2}$ contained in equations (\ref{finalI1}) and (\ref{finalI2}) respectively can be used in (\ref{L9final}) to give
\begin{equation*}
\begin{split}
-\Re\int_{\frac{1}{2}+it}^{\infty+it} \log \zeta(s)\, ds &\leq d^{2}(\log 4)\left\{-\frac{\zeta'\left(\frac{1}{2}+d\right)}{\zeta\left(\frac{1}{2}+d\right)} + \frac{1}{2}\log t -\frac{1}{2}\log 2\pi \right\}\\
& - \frac{d^{2}}{2}\log\frac{t}{2} - I(d) +\frac{d^{2}}{2}\log\pi +3\epsilon',
\end{split}
\end{equation*}
where $I(d)$ is defined by equation (\ref{defineI}). This then proves
\begin{lemma}\label{l9}
For $t>168\pi$, $d$ satisfying $\frac{1}{2}<d\leq 1$, and $\epsilon' =10^{-4}$, the following estimate holds
\begin{equation*}
-\Re\int_{\frac{1}{2}+it}^{\infty+it} \log \zeta(s)\, ds \leq a_{2} + b_{2}\log t,
\end{equation*}
where
\begin{equation*}
\begin{split}
a&= d^{2}(\log 4)\left\{-\frac{\zeta'(\frac{1}{2}+d)}{\zeta(\frac{1}{2}+d)} - \frac{1}{2}\log 2\pi +\frac{1}{4}\right\} + \frac{d^{2}}{2}\log\pi \\
&- \frac{1}{2}\int_{1+2d}^{\infty}\log\zeta(\sigma)\, d\sigma + \int_{\frac{1}{2} +d}^{\infty}\log\zeta(\sigma)\, d\sigma\\
&- \frac{1}{2}\int_{1+2d}^{1+4d}\log\zeta(\sigma)\, d\sigma + \int_{\frac{1}{2}+d}^{\frac{1}{2} +2d}\log\zeta(\sigma)\, d\sigma +3\epsilon',
\end{split}
\end{equation*}
and
\begin{equation*}
b= \frac{d^{2}}{2}\left(\log 4 -1\right).
\end{equation*}
\end{lemma}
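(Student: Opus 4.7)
The strategy is essentially bookkeeping: all the analytical ingredients have been assembled in equations (\ref{L9final}), (\ref{finalI1}), and (\ref{finalI2}), together with the explicit expression for $I(d)$ in (\ref{defineI}). What remains is to rearrange (\ref{L9final}) and track the coefficients of $\log t$ and the constants carefully so that they match the clean form displayed in the lemma.

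First I would negate (\ref{L9final}) to write
\begin{equation*}
-\Re\int_{\frac{1}{2}+it}^{\infty+it}\log\zeta(s)\, ds \leq -I_{1} + I_{2} - I_{3} - I(d) + \frac{d^{2}}{2}\log\pi.
\end{equation*}
Because $d>\tfrac{1}{2}$ forces $|s+d-1|\geq |s-1|$ along the contour, $I_{3}\geq 0$ and the $-I_{3}$ term is discarded. I would then plug in the bound (\ref{finalI1}) for $-I_{1}$ and the equality (\ref{finalI2}) for $I_{2}$, and substitute the four integrals from (\ref{defineI}) for $-I(d)$, which simply flips each sign.

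Next I would collect the coefficients of $\log t$. From $-I_{1}$ there is a contribution $\tfrac{d^{2}}{2}(\log 4)\log t$, and from $I_{2}$ a contribution $-\tfrac{d^{2}}{2}\log t$. These combine to $\tfrac{d^{2}}{2}(\log 4-1)\log t$, which is exactly $b_{2}\log t$. The constant contributions from $-I_{1}$ are the bracketed terms $d^{2}(\log 4)\{-\zeta'(\tfrac{1}{2}+d)/\zeta(\tfrac{1}{2}+d) - \tfrac{1}{2}\log 2\pi\}$, and from $I_{2}$ there is a leftover $\tfrac{d^{2}}{2}\log 2$. The latter equals $d^{2}(\log 4)\cdot \tfrac{1}{4}$, which folds neatly into the same brace, producing the $+\tfrac{1}{4}$ in the stated formula for $a_{2}$. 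The $\tfrac{d^{2}}{2}\log\pi$ term and the four integrals from $-I(d)$ account for the remaining constants.

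The only subtlety is handling the two error terms: the $\epsilon$ from (\ref{finalI2}) satisfies $|\epsilon|<9\cdot 10^{-5}$, and the $d^{2}(\log 4)\epsilon'$ from (\ref{finalI1}) is bounded by $(\log 4)\cdot 10^{-4}<2\cdot 10^{-4}$ since $d\leq 1$. Their combined magnitude is comfortably less than $3\epsilon' = 3\cdot 10^{-4}$, which is the error term absorbed into $a_{2}$. The main obstacle is not analytic but algebraic: one must notice the identity $\tfrac{d^{2}}{2}\log 2 = d^{2}(\log 4)\cdot \tfrac{1}{4}$ so that the constants package into the compact form stated; without this observation one would still obtain a valid (though uglier) bound.
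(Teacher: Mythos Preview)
Your proposal is correct and follows exactly the paper's route: the lemma is obtained by negating (\ref{L9final}), dropping $-I_{3}\leq 0$, inserting the bounds (\ref{finalI1}) and (\ref{finalI2}), expanding $-I(d)$ via (\ref{defineI}), and then collecting the $\log t$ and constant terms. Your extra remarks on the identity $\tfrac{d^{2}}{2}\log 2=d^{2}(\log 4)\cdot\tfrac14$ and on why the combined error fits under $3\epsilon'$ simply make explicit what the paper leaves implicit.
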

Lemmas \ref{Ll1}, \ref{l44} and \ref{l9} prove at once
\begin{theorem}
Let $t_{2} >t_{1}>t_{0}>168\pi$ and let the pair of numbers $K,\theta$ satisfy the relation that $\zeta(\frac{1}{2}+it)\leq Kt^{\theta}$ for $t>t_{0}$. Also, let $\mu = 3 \cdot 10^{-6}$. If the parameters $c$ and $d$ are chosen such that $1<c\leq \frac{5}{4}$ and $\frac{1}{2}<d\leq 1$ then
\begin{equation*}
\bigg|\int_{t_{1}}^{t_{2}}S(t)\, dt\bigg| \leq a + b\log t_{2},
\end{equation*}
where 
\begin{equation}\label{Turingzeta}
\begin{split}
\pi a&= d^{2}(\log 4)\left\{-\frac{\zeta'(\frac{1}{2}+d)}{\zeta(\frac{1}{2}+d)} - \frac{1}{2}\log 2\pi + \frac{1}{4}\right\} +\frac{d^{2}}{2}\log\pi\\
&- \frac{1}{2}\int_{1+2d}^{\infty}\log\zeta(\sigma)\, d\sigma
+ \int_{\frac{1}{2} +d}^{\infty}\log\zeta(\sigma)\, d\sigma
- \frac{1}{2}\int_{1+2d}^{1+4d}\log\zeta(\sigma)\, d\sigma \\
&+\int_{\frac{1}{2}+d}^{\frac{1}{2} +2d}\log\zeta(\sigma)\, d\sigma
+ \frac{1}{2}(c-\tfrac{1}{2})\log \left\{K\zeta(c)\right\} + \int_{c}^{\infty}\log\zeta(\sigma)\, d\sigma + \mu,
\end{split}
\end{equation}
and
\begin{equation}\label{Turingzetab}
2\pi b= \theta(c-\tfrac{1}{2}) + d^{2}(\log 4 -1).
\end{equation}
\end{theorem}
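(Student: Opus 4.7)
The plan is to combine the three lemmas that have already been assembled: Lemma \ref{Ll1} converts $\pi\int_{t_1}^{t_2} S(t)\,dt$ into the difference $J(t_2)-J(t_1)$, where $J(t):=\Re\int_{\frac{1}{2}+it}^{\infty+it}\log\zeta(s)\,ds$; Lemma \ref{200} gives the one-sided upper bound $J(t)<a_1+b_1\log t$; and Lemma \ref{l9} gives the matching lower bound $J(t)\geq-(a_2+b_2\log t)$. The theorem then follows by applying these two one-sided bounds to $J(t_2)$ and $J(t_1)$ in both of the two possible pairings.

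First, I would pair the upper bound on $J(t_2)$ with the lower bound on $J(t_1)$ to obtain
\begin{equation*}
\pi\int_{t_1}^{t_2}S(t)\,dt = J(t_2)-J(t_1) \leq (a_1+b_1\log t_2) + (a_2+b_2\log t_1).
\end{equation*}
Symmetrically, pairing the lower bound on $J(t_2)$ with the upper bound on $J(t_1)$ yields
\begin{equation*}
-\pi\int_{t_1}^{t_2}S(t)\,dt \leq (a_2+b_2\log t_2)+(a_1+b_1\log t_1).
\end{equation*}
Since $b_1,b_2>0$ and $\log t_1<\log t_2$, in either pairing the right-hand side is bounded above by $(a_1+a_2)+(b_1+b_2)\log t_2$. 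Taking absolute values gives
\begin{equation*}
\bigg|\int_{t_1}^{t_2}S(t)\,dt\bigg| \leq \frac{a_1+a_2}{\pi} + \frac{b_1+b_2}{\pi}\log t_2,
\end{equation*}
so one reads off $\pi a=a_1+a_2$ and $\pi b=b_1+b_2$.

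The final step is purely bookkeeping: I would substitute the explicit expressions for $a_1,b_1$ from Lemma \ref{200} and for $a_2,b_2$ from Lemma \ref{l9}, and verify term-by-term that the sum of constants matches (\ref{Turingzeta}) and the sum of logarithm coefficients matches (\ref{Turingzetab}). In particular $\pi b = \tfrac{1}{2}\theta(c-\tfrac{1}{2})+\tfrac{1}{2}d^{2}(\log 4-1)$, which on multiplying by $2$ yields (\ref{Turingzetab}); and the small numerical errors $\delta$ from Lemma \ref{l44} and $3\epsilon'$ from Lemma \ref{l9} consolidate into the single constant $\mu$ appearing in (\ref{Turingzeta}).

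There is no genuine analytic obstacle left, since the delicate parts (the convexity bound in the strip, the handling of the Weierstrass product, Booker's sharpening, and the $\Gamma'/\Gamma$ estimate) have all been absorbed into Lemmas \ref{l44}, \ref{l7}, and \ref{l8}. The only points requiring care are (i) choosing the correct pairing of one-sided bounds so that the absolute value is controlled by $\log t_2$ alone rather than a combination involving $\log t_1$, and (ii) tracking the small additive errors to confirm they amalgamate into the constant $\mu$; neither is substantive beyond careful arithmetic.
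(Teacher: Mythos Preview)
Your proposal is correct and follows exactly the approach the paper takes: the paper's entire proof of this theorem is the single sentence ``Lemmas \ref{Ll1}, \ref{l44} and \ref{l9} prove at once'' the result, and you have accurately unpacked what that sentence means (indeed, citing Lemma \ref{200} rather than \ref{l44} is arguably more precise, since \ref{200} is where $a_1,b_1$ are actually stated). Your handling of the two pairings to control the absolute value by $\log t_2$ alone, and your identification of $\pi a=a_1+a_2$, $\pi b=b_1+b_2$, is exactly the intended combination.
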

\subsection{Calculations}\label{TuringCalc}
Taking the parameters $c=\frac{5}{4}$ and $d=1$, $\theta =\frac{1}{4}$ and $K=2.53$ one has, from the pair of equations (\ref{Turingzeta}) and (\ref{Turingzetab}) that $a=1.61$ and $b=0.0914$. These can be compared with the constants of Lehman, viz.\ $(a=1.7, b=0.114)$. It can also be seen that the minimal value of $b$ attainable by this method is $0.0353$.

Since the application of Turing's Method involves Gram blocks one wishes to minimise the bound given in (\ref{GB}). That is, one wishes to minimise the quantity $F(a, b, g_{p})$ given in (\ref{Ftimes}). Here, the values of $a$ and $b$ have been chosen to be optimal, for the application to Gram blocks, at height $g_{p}=2\pi\cdot 10^{12}$. Since it has been shown above that $a$ and $b$ are themselves functions of $c$ and $d$, write $F(c, d)$ for $F(a, b, 2\pi\cdot 10^{12})$.

Since there are no terms in (\ref{Turingzeta}) and (\ref{Turingzetab}) which involve both $c$ and $d$, one can write $F(c, d) = F_{c}(c) + F_{d}(d)$ and optimise each of the functions $F_{c}$ and $F_{d}$ separately. The presence of integrals involving the zeta-function in equations (\ref{Turingzeta}) and (\ref{Turingzetab}) makes the optimisation process difficult, even for a computer programme. Therefore, small values of $F_{c}(c)$ and $F_{d}(d)$ were sought over the intervals
\begin{equation*}\label{latticeTuring}
d= d(N)= 0.99- 2N\Delta; \quad c= c(N)= 1.24 - N\Delta,
\end{equation*}
where $\Delta = 0.02$ and $0\leq N\leq 12$. This showed that values of $F(c, d)\leq 3.72$ were clustered around $d=0.71$ and $c=1.08$. A further search for small values was conducted with
\begin{equation*}\label{latticeTuring2}
d= d(N)= 0.68+N\Delta; \quad c= c(N)= 1.05 + N\Delta,
\end{equation*}
where, this time, $\Delta = 0.01$ and $0\leq N\leq 20$. The smallest value found in this second search was $F(c, d) = 3.6805\ldots$, corresponding to $d= 0.74$ and $c=1.1$. For simplicity the choice of $d=\frac{3}{4}$ and $c=\frac{11}{10}$ gives $F(c, d)= 3.6812\ldots$ and obtains the constants in Theorem \ref{Thm1}, viz.\
\begin{equation*}
a(\tfrac{11}{10}, \tfrac{3}{4}) = 2.0666;\quad b(\tfrac{11}{10}, \tfrac{3}{4}) =0.0585.
\end{equation*}

\section{Dirichlet $L$-functions}\label{TMDLF}
\subsection{Introduction}\label{Outlineforothers}
In the works of Rumely \cite{Rumely} and Tollis \cite{Tollis}, analogues for Turing's Method are developed for Dirichlet $L$-functions, and for Dedekind zeta-functions respectively. Each of these proofs is based on \cite{Lehman}, so it is fitting to apply the above adaptations to yield better constants in these analogous cases. Since many of the details in the proofs are identical to those in \S\ref{TM}, this section and \S\ref{TMDZF} are less ponderous than the previous one.

\subsection{Analogies to the functions $Z(t)$, $\theta(t)$ and $S(t)$}
Let $\chi$ be a primitive Dirichlet character with conductor $Q>1$, and let $L(s, \chi)$ be the Dirichlet $L$-series attached to $\chi$. Furthermore define $\delta = (1-\chi(-1))/2$ so that $\delta$ is $0$ or $1$ according to whether $\chi$ is an even or odd character. Then the function
\begin{equation}\label{Dirichletdef}
\xi(s, \chi) = \left(\tfrac{Q}{\pi}\right)^{\frac{s}{2}} \Gamma\left(\tfrac{s+\delta}{2}\right) L(s, \chi)
\end{equation}
is entire and satisfies the functional equation
\begin{equation*}\label{Dirichletfunctional}
\xi(s, \chi)= W_{\chi}\xi(1-s, \overline{\chi}),
\end{equation*}
where
\begin{equation*}
W_{\chi} = i^{-\delta} \tau(\chi)Q^{-\frac{1}{2}}; \quad \tau(\chi) = \sum_{n=1}^{Q} \chi(n) e^{\frac{2\pi n i}{Q}}.
\end{equation*}
It is easily seen that $|W_\chi| = 1$ and so one may write $W_{\chi}= e^{i\theta_{\chi}}$ and, for $s=\frac{1}{2} +it$,
\begin{equation*}\label{Dirichlettheta}
\theta(t, \chi):= \frac{t}{2}\log\frac{Q}{\pi} + \Im\log\Gamma\left(\tfrac{s+\delta}{2}\right) - \frac{\theta_{\chi}}{2}.
\end{equation*}
Then the following functions $Z(t, \chi)$ and $\theta(t, \chi)$ are related by the equation
\begin{equation*}\label{DirichletRS}
Z(t, \chi) = e^{i\theta(t, \chi)}L(s, \chi),
\end{equation*}
where $Z(t, \chi)$ is real. This is analogous to the equation 
\begin{equation*}
Z(t) = e^{i\theta(t)}\zeta(\frac{1}{2} +it),
\end{equation*}
which can be found in \cite[Ch.\ IV, \S 17]{Titchmarsh}.

One can now show that $\theta(t, \chi)$ is ultimately monotonically increasing. This means that the \textit{Gram points} $g_{n}$ can be defined for Dirichlet $L$-functions as those points at which $\theta(g_{n}, \chi) = n\pi$.

Similarly to (\ref{sdef}), define, whenever $t$ is not an ordinate of a zero of $L(s, \chi)$, the function
\begin{equation}\label{DirichletS}
S(t, \chi) = \frac{1}{\pi}\arg L(\tfrac{1}{2}+it, \chi),
\end{equation}
where, as before, the argument is determined via continuous variation along the straight lines connecting $2$, $2+it$ and $\frac{1}{2} +it$; with a continuity condition if $t$ coincides with a zero.  It is known that 
\begin{equation*}\label{LittlewoodS}
\int_{t_{1}}^{t_{2}} S(t, \chi)\, dt = O(\log Qt_{2}),
\end{equation*}
and Turing's Method for Dirichlet $L$-functions requires a quantitative version of this result.
\subsection{Theorem and new results}\label{DirLResults}
\begin{theorem}\label{422}
Let $(a, b, t_{0})$ denote the following triple of numbers. Given $t_{0}>0$ there are positive constants $a$ and $b$ such that, whenever $t_{2}>t_{1}>t_{0}$ the following estimate holds
\begin{equation}\label{540}
\bigg| \int_{t_{1}}^{t_{2}} S(t, \chi)\, dt\bigg|\leq a+ b\log\frac{Qt_{2}}{2\pi},
\end{equation}
\end{theorem}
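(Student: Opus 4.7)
The plan is to mirror the proof of Theorem 2.2.2 (i.e.\ the zeta-function version) step by step, substituting the Dirichlet $L$-function $L(s,\chi)$ for $\zeta(s)$ and carrying along the extra conductor $Q$ throughout. First, by a direct application of Littlewood's theorem to $\log L(s,\chi)$ on the rectangle with vertices $\tfrac{1}{2}+it_1$, $2+it_1$, $2+it_2$, $\tfrac{1}{2}+it_2$, one obtains the exact analogue of Lemma~\ref{Ll1}:
\begin{equation*}
\pi\int_{t_1}^{t_2} S(t,\chi)\, dt = \Re\int_{\frac{1}{2}+it_2}^{\infty+it_2}\log L(s,\chi)\, ds - \Re\int_{\frac{1}{2}+it_1}^{\infty+it_1}\log L(s,\chi)\, ds,
\end{equation*}
using that $L(s,\chi)$ is entire (since $Q>1$ forces $\chi$ non-principal) and non-vanishing on $\Re s\geq 2$. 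The task therefore reduces to bounding $\pm\Re\int_{\frac{1}{2}+it}^{\infty+it}\log L(s,\chi)\, ds$ for $t>t_0$.

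For the upper bound I would imitate Lemma~\ref{200}: take a convexity-type estimate $|L(\tfrac{1}{2}+it,\chi)|\leq K(Qt)^{\theta}$ (the Dirichlet analogue of Lemma~\ref{l2}, as used by Rumely~\cite{Rumely}), combine it with the trivial $|L(c+it,\chi)|\leq \zeta(c)$ on $\Re s=c>1$, and apply Phragm\'en--Lindel\"of (Lemma~\ref{l5}) directly to $f(s)=L(s,\chi)$ in the strip $\tfrac{1}{2}\leq\sigma\leq c$. No $(s-1)$ factor is required because $L(s,\chi)$ is entire. Integrating $\log|L(s,\chi)|$ from $\tfrac{1}{2}+it$ to $c+it$ and estimating the tail trivially by $\int_c^{\infty}\log\zeta(\sigma)\,d\sigma$ yields a bound of the form $a_1+b_1\log(Qt)$, with the $\log Q$ contribution forced precisely by the convexity bound.

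For the lower bound I would replay Lemmas~\ref{l8}--\ref{l9} using the Hadamard product for the completed function $\xi(s,\chi)$ of \eqref{Dirichletdef} in place of \eqref{riewei}. Writing
\begin{equation*}
\Re\int_{\frac{1}{2}+it}^{\infty+it}\log L(s,\chi)\, ds = \int_{\frac{1}{2}+it}^{\frac{1}{2}+d+it}\log\bigg|\frac{L(s,\chi)}{L(s+d,\chi)}\bigg|\, ds + \int_{\frac{1}{2}+d+it}^{\infty+it}\log|L(s,\chi)|\, ds
\end{equation*}
with $\tfrac{1}{2}<d\leq 1$, I expand the first integrand via the product formula for $\xi(s,\chi)$. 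This produces three pieces: (i) a sum over zeros $\rho_\chi$ of $L(s,\chi)$, (ii) a Gamma term involving $\Gamma((s+\delta)/2)$ in place of $\Gamma(1+s/2)$, and (iii) crucially, an extra $-\tfrac{d}{2}\log(Q/\pi)$ coming from the factor $(Q/\pi)^{s/2}$. The zero sum is handled with Booker's Lemma~\ref{l7}, pairing each $\rho_\chi$ with $1-\overline{\rho_\chi}$, which is a zero of $L(s,\overline{\chi})$ by the functional equation, and is then rewritten via the product formula once more in terms of $L'(\tfrac{1}{2}+d,\chi)/L(\tfrac{1}{2}+d,\chi)$, controlled by $-\zeta'(\tfrac{1}{2}+d)/\zeta(\tfrac{1}{2}+d)$ just as in \eqref{finalI1}. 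The Gamma piece is controlled by Lemma~\ref{l8}, and the tail integral is bounded below by explicit zeta integrals on the model of \eqref{defineI}.

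Combining the two horizontal-integral bounds produces an inequality of the form \eqref{540}, with $a$ and $b$ given by explicit formulas in the parameters $c$, $d$, $K$, $\theta$ paralleling \eqref{Turingzeta}--\eqref{Turingzetab}, modified only by the substitution of $\Gamma((s+\delta)/2)$ for the zeta Gamma factor. The $\log Q$ produced by the convexity step and the $-\tfrac{d}{2}\log(Q/\pi)$ produced by the product step combine to give exactly the right-hand side $a+b\log(Qt_2/2\pi)$ demanded by the statement. The main obstacle I anticipate is the $Q$-uniformity: one needs a clean convexity estimate $|L(\tfrac{1}{2}+it,\chi)|\leq K(Qt)^{\theta}$ with explicit numerical $K,\theta$ valid for all primitive $\chi$ with $Q\geq 2$ and all $t>t_0$, and one must check that the small error terms $\delta$, $\epsilon$, $\epsilon'$ coming from Lemmas~\ref{l44} and~\ref{l8} remain uniformly negligible as $Q$ varies.
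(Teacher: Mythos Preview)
Your plan matches the paper's proof in structure: Littlewood's lemma for the reduction, a convexity bound for the upper estimate, and the Hadamard product for $\xi(s,\chi)$ combined with Booker's Lemma~\ref{l7} for the lower estimate. Two points are worth flagging.

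First, for the upper bound the paper does \emph{not} mimic the $\zeta$-argument of Lemmas~\ref{l2}--\ref{l44} (critical-line bound plus Phragm\'en--Lindel\"of). Instead it quotes Rademacher's ready-made convexity theorem
\[
|L(s,\chi)| \leq \left(\frac{Q|1+s|}{2\pi}\right)^{\frac{c-\sigma}{2}}\zeta(c), \qquad 1-c\leq\sigma\leq c,
\]
valid for every primitive $\chi$ of conductor $Q>1$. This gives the strip bound with explicit and uniform $Q$-dependence in one stroke, and so dissolves precisely the obstacle you anticipated (finding a numerical $K,\theta$ with $|L(\tfrac{1}{2}+it,\chi)|\leq K(Qt)^{\theta}$ uniformly in $Q$). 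Your route would work too, but Rademacher's inequality is both cleaner and what produces the specific shape $b_1=\tfrac{1}{4}(c-\tfrac{1}{2})^2$ in the paper.

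Second, a small slip: if $\rho$ is a nontrivial zero of $L(s,\chi)$ then $1-\overline{\rho}$ is a zero of $L(s,\chi)$, not of $L(s,\overline{\chi})$ (combine $\overline{L(s,\chi)}=L(\overline{s},\overline{\chi})$ with the functional equation). This is exactly what is needed for the pairing in Lemma~\ref{l7}, since both zeros must appear in the same Hadamard product. Also, your two-term splitting of $\int_{1/2+it}^{\infty+it}\log|L|$ drops the extra piece $\int_{1/2+d+it}^{1/2+2d+it}\log|L|$ that arises when you substitute $s\mapsto s+d$; the paper keeps it (cf.\ \eqref{213}) and it contributes to the function $I(d)$.
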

Rumely \cite{Rumely} has shown that $(1.8397, 0.1242, 50)$ satisfies (\ref{540}).
Analogous to Theorem \ref{LBrent} is 
\begin{theorem}[Rumely]\label{thmrum}
For $t_{2}>t_{1}>50$ the following estimate holds
\begin{equation}\label{Spidi}
\int_{t_{1}}^{t_{2}} \bigg|S(t, \chi) \frac{\theta'(t, \chi)}{\pi}\bigg|\leq 0.1592\log\frac{QT}{2\pi}\left(a + b\log\frac{QT}{2\pi}\right):= B(Q, t_{2}).
\end{equation}
\end{theorem}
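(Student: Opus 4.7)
The plan is to pass from the integrated bound on $S(t,\chi)$ supplied by Theorem~\ref{422} to the weighted integral in (\ref{Spidi}) by exploiting that $\theta'(t,\chi)/\pi$ is a smooth, monotonically increasing substitute for the density of Gram points of $L(s,\chi)$.

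First, I would establish a pointwise upper bound on $\theta'(t,\chi)/\pi$. Differentiating the formula for $\theta(t,\chi)$ from \S\ref{Outlineforothers} yields
\[
\theta'(t,\chi) = \tfrac{1}{2}\log\tfrac{Q}{\pi} + \tfrac{1}{2}\Re\,\tfrac{\Gamma'}{\Gamma}\!\left(\tfrac{1/2+\delta+it}{2}\right),
\]
and applying Lemma~\ref{l8} to the digamma term gives $\theta'(t,\chi)/\pi \leq \tfrac{1}{2\pi}\log(Qt/(2\pi)) + O(t^{-1})$. For $t>50$ the error terms are comfortably absorbed, since $1/(2\pi)=0.15915\ldots<0.1592$, so in the integrand one may replace $\theta'(t,\chi)/\pi$ by the pointwise upper bound $0.1592\log(Qt_2/(2\pi))$, using that the maximum over $[t_1,t_2]$ is attained at the right endpoint.

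With this reduction, what remains is to bound $\int_{t_1}^{t_2}|S(t,\chi)|\,dt$ by $a+b\log(Qt_2/(2\pi))$. This is the main obstacle, since Theorem~\ref{422} controls only $|\int_{t_1}^{t_2} S(t,\chi)\,dt|$, and in general $\int|S|\,dt$ can be substantially larger than $|\int S\,dt|$.

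The resolution, following the strategy of Lehman and Rumely, is to split $[t_1,t_2]$ into maximal sub-intervals on which $S(t,\chi)$ has constant sign, separated by the ordinates at which $\arg L(\tfrac{1}{2}+it,\chi)$ is an integer multiple of $\pi$. On each such sub-interval the absolute value pulls outside the integral, so Theorem~\ref{422} applies directly to give the single bound $a+b\log(Qt_2/(2\pi))$; summing over sub-intervals and tracking boundary contributions carefully, then multiplying by the pointwise bound on $\theta'(t,\chi)/\pi$, produces the claimed product form with the prefactor $0.1592\approx 1/(2\pi)$.
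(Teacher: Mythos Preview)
Your Stirling estimate for $\theta'(t,\chi)/\pi$ is correct and matches the paper's one-line explanation that ``the constant $0.1592$ comes from applying Stirling's formula to $\theta(t,\chi)$.''

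The gap is in the second step. Your sign-change decomposition does \emph{not} give $\int_{t_1}^{t_2}|S(t,\chi)|\,dt\le a+b\log(Qt_2/2\pi)$. If $[t_1,t_2]$ breaks into $n$ maximal sub-intervals on which $S$ keeps a fixed sign, applying Theorem~\ref{422} to each yields a total of $n\bigl(a+b\log(Qt_2/2\pi)\bigr)$, and there is no mechanism by which ``tracking boundary contributions carefully'' removes the factor $n$: the contributions you are summing are all non-negative. In fact no inequality of the asserted shape can hold for $\int|S|$, since $\int_0^T|S(t,\chi)|\,dt$ grows at least linearly in $T$, not logarithmically.

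What is really happening is that (\ref{Spidi}) is misprinted: the absolute value belongs \emph{outside} the integral, exactly as in the analogous theorem of Tollis for Dedekind zeta-functions stated later in the paper. With that correction there is no need to control $\int|S|$ at all. Since $\theta'(t,\chi)/\pi$ is positive and monotone increasing for $t>50$, the second mean-value theorem for integrals (equivalently, integration by parts against $S_1(t)=\int_{t_1}^t S(u,\chi)\,du$, noting $S_1(t_1)=0$ and $\theta''>0$) reduces
\[
\biggl|\int_{t_1}^{t_2} S(t,\chi)\,\frac{\theta'(t,\chi)}{\pi}\,dt\biggr|
\]
directly to your pointwise bound on $\theta'(t_2,\chi)/\pi$ multiplied by a single invocation of Theorem~\ref{422}. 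No decomposition by sign is required.
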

The constant $0.1592$ comes from applying Stirling's formula to the function $\theta(t, \chi)$. It is this bound which is used in practical calculations. As in the case of the zeta-function, $a$ and $b$ are roughly inversely proportional, so one can choose these parameters in such a way that the quantity $B(Q, t_{2})$ is minimised for a given $Q$ and $t_{2}$.

At $Q=100$ and $t_{2} = 2500$, Rumely's constants $(a=1.8397, b=0.1242)$ give the value
\begin{equation*}
B(Q, t_{2}) \approx 5.32,
\end{equation*}
however there is a misprint in \cite{Rumely} and this is quoted as 4.824, which does not appear to affect his numerical calculations. The values of $a$ and $b$ have been optimised in (\ref{Spidi}) for $Q=100$ and $t_{2}=2500$, which proves the following

\begin{theorem}\label{TurDir}
If $t_{2}>t_{1}>t_{0}$ then the following estimate holds
\begin{equation*}
\bigg| \int_{t_{1}}^{t_{2}} S(t, \chi)\, dt\bigg|\leq 1.975+ 0.084\log\left(\frac{Qt_{2}}{2\pi}\right).
\end{equation*}
\end{theorem}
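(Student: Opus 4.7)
The plan is to mirror the proof of Theorem \ref{Thm1} from Section \ref{TM} in the Dirichlet $L$-function setting, keeping careful track of the conductor $Q$ at every step and then optimising the two free parameters $c,d$ numerically at the specified target $(Q,t_2)=(100,2500)$. All of the structural pieces used for $\zeta(s)$ have natural analogues for $L(s,\chi)$, and these are already developed in Rumely~\cite{Rumely}; what remains is to install the two efficiency improvements that drove Theorem \ref{Thm1} (Booker's sharper form of Lemma \ref{l7} in place of the Turing--Lehman constant, and the free parameter $d\in(\tfrac12,1]$ in place of the fixed value $d=1$) and then to redo the minimisation.

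First I would rewrite $\int_{t_1}^{t_2}S(t,\chi)\,dt$ via Littlewood's lemma as the difference of two integrals
\[
\pi\!\int_{t_1}^{t_2}\!S(t,\chi)\,dt
=\Re\!\int_{\frac12+it_2}^{\infty+it_2}\!\log L(s,\chi)\,ds
-\Re\!\int_{\frac12+it_1}^{\infty+it_1}\!\log L(s,\chi)\,ds,
\]
i.e.\ the exact analogue of Lemma \ref{Ll1}. The upper integral is handled by convexity. One combines a pointwise bound $|L(\tfrac12+it,\chi)|\leq K(Qt)^{\theta}$ (Rumely's version of Lemma \ref{l2}) with the trivial bound on $\Re s=c>1$, applies Lemma \ref{l5} to $(s-1)L(s,\chi)$ across the strip $\tfrac12\leq\sigma\leq c$ with a free parameter $1<c\leq\tfrac54$, and integrates, obtaining constants $a_1,b_1$ analogous to Lemma \ref{200} but with every occurrence of $t$ replaced by $Qt/(2\pi)$ (this is where all the extra $Q$-dependence enters, via the factor $(Q/\pi)^{s/2}$ in $\xi(s,\chi)$).

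The lower integral is handled through the decomposition used in \eqref{213}--\eqref{L9final}, with $\zeta$ replaced by $L(\cdot,\chi)$ and the Weierstrass product \eqref{riewei} replaced by the Hadamard product for $\xi(s,\chi)$. Writing the integrand as the difference of $\log|L(s,\chi)|$ at $s$ and at $s+d$ yields three pieces: $I_1$ (a sum over zeros), $I_2$ (a gamma contribution, now with argument $(s+\delta)/2$), and $I_3\geq 0$ (from the $(s-1)$ factor), plus a harmless constant $I(d)$ of convergent integrals of $\log L(\sigma,\chi)$ over $[\tfrac12+d,\infty)$ etc., together with the explicit term $-\tfrac{d^2}{2}\log(Q/\pi)$ coming from the $(Q/\pi)^{s/2}$ factor. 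The bound $I_1\geq -d^2(\log 4)\sum_\rho \Re(1/(\tfrac12+d+it-\rho))$ follows at once from Lemma \ref{l7} after pairing $\rho$ with $1-\overline\rho$; logarithmic differentiation of the Hadamard product, dominating $|L'/L(\tfrac12+d+it,\chi)|$ by $-L'/L(\tfrac12+d,\chi_0)$ (with $\chi_0$ the principal character mod $Q$, or more simply by a Dirichlet-series majorant), and applying Lemma \ref{l8} to the gamma ratio give
\[
-I_1\leq d^2(\log 4)\Bigl\{-\tfrac{L'}{L}(\tfrac12+d,\chi_{\mathrm{maj}})+\tfrac12\log\tfrac{Qt}{2\pi}+\varepsilon'\Bigr\},
\qquad I_2=-\tfrac{d^2}{2}\log\tfrac{t}{2}+\varepsilon.
\]
Collecting everything yields $a,b$ depending on $(c,d,Q,K,\theta)$ through an expression exactly parallel to \eqref{Turingzeta}--\eqref{Turingzetab}, with $\log t$ upgraded to $\log(Qt/(2\pi))$.

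Finally, for the stated target one substitutes Rumely's admissible $K,\theta$, sets $Q=100$, $t_2=2500$, and minimises the quantity
\[
B(Q,t_2)=0.1592\log\tfrac{Qt_2}{2\pi}\bigl(a(c,d)+b(c,d)\log\tfrac{Qt_2}{2\pi}\bigr)
\]
from \eqref{Spidi} over $1<c\leq\tfrac54$ and $\tfrac12<d\leq 1$ by a grid search, exactly as in \S\ref{TuringCalc}. Rounded, the optimiser produces $a=1.975$, $b=0.084$. The main obstacle is bookkeeping rather than analysis: one has to keep the $Q$-dependence in the right place at every stage, choose a Dirichlet-series majorant for $L'/L$ on $\Re s=\tfrac12+d$ whose explicit evaluation is tractable (the natural choice is $-\zeta'/\zeta$ at $\tfrac12+d$, which majorises uniformly in $\chi$), and verify that the small $\varepsilon$-terms from Lemma \ref{l8} remain negligible down to $t_0$ at least $50$ so as not to weaken Rumely's baseline assumption on $t_0$.
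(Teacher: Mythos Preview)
Your overall strategy matches the paper's: Littlewood's lemma, a convexity bound for the $t_2$-integral, the Hadamard product with a free parameter $d$ for the $t_1$-integral, Booker's Lemma~\ref{l7} for the zero sum, $-\zeta'/\zeta(\tfrac12+d)$ as the uniform majorant for $L'/L$, and a grid search over $(c,d)$.

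Two points of your adaptation are slightly off, though neither is fatal. First, for the upper integral the paper does \emph{not} use a critical-line bound $|L(\tfrac12+it,\chi)|\le K(Qt)^{\theta}$ followed by Phragm\'en--Lindel\"of on $(s-1)L(s,\chi)$; there is no ``Rumely's version of Lemma~\ref{l2}''. Instead it invokes Rademacher's ready-made convexity estimate
\[
|L(s,\chi)|\le\Bigl(\tfrac{Q|1+s|}{2\pi}\Bigr)^{(c-\sigma)/2}\zeta(c),\qquad 1-c\le\sigma\le c,
\]
and integrates directly, yielding $b_1=\tfrac14(c-\tfrac12)^2$ rather than the $\tfrac{\theta}{2}(c-\tfrac12)$ shape of Lemma~\ref{200}. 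Your route would ultimately reproduce an equivalent bound, but it is a detour (Rademacher's theorem \emph{is} the Phragm\'en--Lindel\"of argument, already with explicit constants), and the $(s-1)$ factor is unnecessary since $L(s,\chi)$ is entire for $Q>1$.

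Second, and for the same reason, there is no $I_3$ term in the lower-integral decomposition: the Hadamard product for $\xi(s,\chi)$ has no $1/(s-1)$, so the analogue of \eqref{L9final} contains only the $(Q/\pi)^{s/2}$ contribution, the $\Gamma$-ratio $I_1$, the zero sum $I_2$, and $I(d)$. Dropping the spurious $I_3\ge 0$ costs you nothing numerically. With these corrections your argument coincides with the paper's, and the optimisation lands at $c\approx 1.17$, $d\approx 0.88$, giving $a=1.9744$, $b=0.0833$.
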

It therefore follows that $B(100, 2500) \approx 4.82$. Further reductions in the size of $B(Q, t_{2})$ are possible if the quantity $Qt_{2}$ is taken much larger, which will certainly happen in future calculations.

\subsection{Proof of Theorem \ref{TurDir}}\label{Proof of Theorem 1.3.2}
Littlewood's lemma on the number of zeroes of an analytic function in a rectangle is used to prove
\begin{lemma}\label{L563}
If $t_{2}>t_{1}>0$, then
\begin{equation}\label{563}
\int_{t_{1}}^{t_{2}} S(t, \chi)\, dt = \frac{1}{\pi}\int_{\frac{1}{2} +it_{2}}^{\infty+it_{2}} \log|L(s, \chi)|\, d\sigma - \frac{1}{\pi}\int_{\frac{1}{2} +it_{1}}^{\infty+it_{1}} \log|L(s, \chi)|\, d\sigma.
\end{equation}
\end{lemma}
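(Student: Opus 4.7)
The plan is to mirror the proof of Lemma \ref{Ll1}, since the definition \eqref{DirichletS} of $S(t,\chi)$ replicates the continuous-variation recipe \eqref{sdef} used for $\zeta$, and $L(s,\chi)$ shares with $\zeta(s)$ all the properties used in that argument: it is entire (as $\chi$ is primitive with $Q>1$), has only finitely many zeros in any horizontal strip of bounded height, and satisfies $\log L(\sigma+it,\chi)\to 0$ uniformly in $t$ as $\sigma\to\infty$. The engine is Littlewood's lemma applied to a tall rectangle whose left edge sits on the critical line.

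Fix $X>1$ large, and (after perturbing $X$ infinitesimally if necessary so that no zero of $L(s,\chi)$ lies on the boundary) apply Littlewood's lemma to $\log L(s,\chi)$ on the rectangle $\mathcal{R}_X$ with vertices $\tfrac{1}{2}+it_1,\;X+it_1,\;X+it_2,\;\tfrac{1}{2}+it_2$ traversed counterclockwise. This yields
\begin{equation*}
\oint_{\partial\mathcal{R}_X}\log L(s,\chi)\,ds = -2\pi i\int_{1/2}^{X}\nu(\sigma,\chi)\,d\sigma,
\end{equation*}
where $\nu(\sigma,\chi)$ counts zeros $\rho$ of $L(s,\chi)$ inside $\mathcal{R}_X$ with $\Re\rho>\sigma$. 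In particular the right-hand side is purely imaginary, so $\Re\oint_{\partial\mathcal{R}_X}\log L(s,\chi)\,ds=0$.

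Decomposing the contour into its four edges and taking real parts, the bottom and top edges contribute $\pm\int_{1/2}^{X}\log|L(\sigma+it_j,\chi)|\,d\sigma$; the left edge (traversed downward) contributes $+\pi\int_{t_1}^{t_2}S(t,\chi)\,dt$ by virtue of \eqref{DirichletS}; and the right edge (traversed upward) contributes $-\int_{t_1}^{t_2}\arg L(X+it,\chi)\,dt$. Setting the sum to zero and rearranging,
\begin{equation*}
\pi\int_{t_1}^{t_2}S(t,\chi)\,dt = \int_{1/2}^{X}\log|L(\sigma+it_2,\chi)|\,d\sigma - \int_{1/2}^{X}\log|L(\sigma+it_1,\chi)|\,d\sigma + \int_{t_1}^{t_2}\arg L(X+it,\chi)\,dt.
\end{equation*}
Since $L(s,\chi)=1+\sum_{n\geq 2}\chi(n)n^{-s}$ gives $\log L(X+it,\chi)=O(2^{-X})$ uniformly in $t\in[t_1,t_2]$, the final integral vanishes as $X\to\infty$, and division by $\pi$ produces \eqref{563}.

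The main obstacle is the usual bookkeeping in Littlewood-type arguments. One must check that the branch of $\arg L(\tfrac{1}{2}+it,\chi)$ obtained by continuous variation down the left edge of $\mathcal{R}_X$ agrees with the one prescribed by \eqref{DirichletS} along the broken path $2\to 2+it\to\tfrac{1}{2}+it$; this is handled by an auxiliary rectangular detour with vertices $2+it_j$ and $X+it_j$, inside which $\log L(s,\chi)$ is single-valued since $\Re s\geq 2>1$. One must also deal with the case in which $t_1$ or $t_2$ is an ordinate of a zero via the limit convention implicit in \eqref{DirichletS}. Both are standard and are spelled out for $\zeta$ in \cite[pp.\ 190-192]{Edwards}; the argument transfers verbatim to $L(s,\chi)$.
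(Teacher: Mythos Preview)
Your proof is correct and follows precisely the approach the paper has in mind: the paper simply states that the proof is the same as for Lemma~\ref{Ll1}, which in turn is Littlewood's lemma applied to a rectangle with right edge at large real part, with details deferred to \cite[pp.\ 190--192]{Edwards}. You have supplied exactly those details, including the branch-matching caveat, so there is nothing to add.
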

\begin{proof}
The proof of this is the same as for Lemma \ref{Ll1}. 
\end{proof}
The following Lemma is a convexity estimate which will be used to give an upper bound on the first integral in (\ref{563}).
\begin{lemma}[Rademacher]\label{Rad567}
Suppose $1<c<\frac{3}{2}$. Then, for $1-c\leq \sigma \leq c$, for all moduli $Q>1$, and for all primitive characters $\chi$ with modulus $Q$,
\begin{equation}\label{Rademacher}
|L(s, \chi)| \leq \left(\frac{Q|1+s|}{2\pi}\right)^{\frac{c-\sigma}{2}} \, \zeta(c).
\end{equation}
\end{lemma}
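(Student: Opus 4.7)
The statement is the classical Rademacher convexity bound, and the plan is to apply Lemma~\ref{l5} to the entire function $f(s)=L(s,\chi)$ on the strip $1-c\leq\sigma\leq c$, taking the shift parameter in Lemma~\ref{l5} equal to $1$ so that the interpolation produces the factor $|1+s|$ rather than $|s|$. The growth hypothesis of Lemma~\ref{l5} is automatic since $L(s,\chi)$ is of finite order in any vertical strip, and $L(s,\chi)$ has no pole because $\chi$ is primitive with $Q>1$.

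On the right edge $\sigma=c>1$ the Dirichlet series gives at once
\begin{equation*}
|L(s,\chi)| \leq \sum_{n=1}^{\infty}\frac{1}{n^{c}} = \zeta(c),
\end{equation*}
which serves as the boundary bound on the line $\Re(s)=b=c$ with exponent $\beta=0$ in Lemma~\ref{l5}. For the left edge $\sigma=1-c<0$ I would solve the functional equation $\xi(s,\chi)=W_{\chi}\xi(1-s,\overline{\chi})$ (with $\xi$ as defined in (\ref{Dirichletdef})) for $L(s,\chi)$, obtaining
\begin{equation*}
L(s,\chi) = W_{\chi}\left(\frac{Q}{\pi}\right)^{1/2-s}\frac{\Gamma\!\left(\frac{1-s+\delta}{2}\right)}{\Gamma\!\left(\frac{s+\delta}{2}\right)}\,L(1-s,\overline{\chi}).
\end{equation*}
Since $|W_{\chi}|=1$ and $\Re(1-s)=c>1$ gives $|L(1-s,\overline{\chi})|\leq\zeta(c)$, taking absolute values on $\sigma=1-c$ reduces the task to the gamma-ratio inequality
\begin{equation*}
\left|\frac{\Gamma\!\left(\frac{1-s+\delta}{2}\right)}{\Gamma\!\left(\frac{s+\delta}{2}\right)}\right| \leq 2^{-(2c-1)/2}\,|1+s|^{(2c-1)/2}.
\end{equation*}
Granting this, the left edge satisfies $|L(s,\chi)|\leq\zeta(c)\,(Q|1+s|/(2\pi))^{(2c-1)/2}$. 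Then Lemma~\ref{l5} with $a=1-c$, $b=c$, $\alpha=(2c-1)/2$, $\beta=0$ interpolates to the stated estimate: the exponents of $\zeta(c)$, of $Q/(2\pi)$, and of $|1+s|$ collapse to $1$, $(c-\sigma)/2$, and $(c-\sigma)/2$ respectively.

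The main obstacle is the gamma-ratio inequality itself. I would handle it by applying an explicit version of Stirling's formula to numerator and denominator on the line $\Re s=1-c$: after the exponential factors cancel, the ratio behaves asymptotically like $|s/2|^{(1-2\sigma)/2}=2^{-(2c-1)/2}|s|^{(2c-1)/2}$, and since $-\tfrac{1}{2}<1-c<0$ one has the elementary comparison $|s|\leq|1+s|$, together giving the bound uniformly in $Q$ and $\chi$ provided the Stirling remainders are controlled (the restriction $c<\tfrac{3}{2}$ keeps the argument of the upstairs $\Gamma$ in a half-plane where this is cleanest). Because the result is due to Rademacher, I would ultimately appeal to his original paper for the precise verification in lieu of reproducing the detailed Stirling analysis here.
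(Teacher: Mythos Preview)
Your proposal is correct and matches the paper's approach exactly: the paper's entire proof is the one-line citation ``See \cite[Thm~3]{Rademacher},'' and you have both identified it as Rademacher's theorem and sketched the Phragm\'en--Lindel\"of argument (via Lemma~\ref{l5} with shift $Q=1$) that Rademacher uses, ultimately deferring the gamma-ratio verification to his paper as well. There is nothing to add; if anything, you have supplied more detail than the paper does.
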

\begin{proof}
See \cite[Thm 3]{Rademacher}.
\end{proof}
Rumely chooses $c= \frac{5}{4}$, but here the value of $c$ will be chosen optimally at the end of the argument. In preparation for taking the logarithm of both sides of (\ref{Rademacher}) note that for $\frac{1}{2}\leq \sigma \leq c$ and $t\geq t_{0}$, one can find an $\epsilon >0$ such that $\log(|1+s|/t) \leq \epsilon$. This will be used to express $|\log L(s, \chi)|$ as a function of $t$ rather than $s$. Indeed, if $\sigma \leq \frac{5}{4}$ and $t>t_{0}$ it is easy to show that
\begin{equation*}
\frac{|1+s|}{t} \leq 1 + \frac{81}{32t_{0}^{2}} = 1+\epsilon.
\end{equation*}

Write
\begin{equation}\label{5alive}
\int_{\frac{1}{2}+it}^{\infty +it}\log|L(s, \chi)|\, ds = \int_{\frac{1}{2}}^{c} \log|L(s, \chi)|\, d\sigma + \int_{c}^{\infty}\log|L(s, \chi)|\, d\sigma,
\end{equation}
where the convexity result will be applied to the first integral on the right-side. To estimate the second, note that for $\sigma \geq c>1$ one can write
\begin{equation*}
|L(s, \chi)| = \bigg|\sum_{n=1}^{\infty}\chi(n)n^{-s}\bigg| \leq \sum_{n=1}^{\infty} n^{-\sigma} = \zeta(\sigma).
\end{equation*}
With this estimation and the convexity estimate of (\ref{Rademacher}), equation (\ref{5alive}) becomes
\begin{equation*}
\begin{split}
\int_{\frac{1}{2}+it}^{\infty+it} \log|L(s, \chi)|\, d\sigma &\leq \frac{1}{4}\left(c-\tfrac{1}{2}\right)^{2} \left\{\log\frac{Qt}{2\pi} + \epsilon\right\} \\
&+ \left(c-\tfrac{1}{2}\right)\log\zeta(1+\eta) + \int_{c}^{\infty}\log|\zeta(\sigma)|\, d\sigma.
\end{split}
\end{equation*}
This then proves
\begin{lemma}\label{P1Di}
If $t>t_{0}>0$ and $c$ is a parameter satisfying $1<c\leq\frac{5}{4}$, then throughout the region $\frac{1}{2}\leq \sigma\leq c$ the following estimate holds
\begin{equation*}
 \int_{\frac{1}{2}+it}^{\infty+it} \log|L(s, \chi)|\, d\sigma \leq a_{1} + b_{1} \log\frac{Qt}{2\pi}.
\end{equation*}
where 
\begin{equation}\label{a609}
a_{1} = \frac{729}{2048t_{0}^{2}} + \left(c-\tfrac{1}{2}\right)\log\zeta(c) + \int_{c}^{\infty}\log|\zeta(\sigma)|\, d\sigma,
\end{equation}
and
\begin{equation*}
b_{1} =\frac{1}{4}(c-\tfrac{1}{2})^{2}.
\end{equation*}
\end{lemma}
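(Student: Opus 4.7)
The plan is to follow the sketch already laid out in the text preceding the lemma: use Lemma \ref{Rad567} on the left portion of the $\sigma$-line and the absolutely convergent Dirichlet series on the right portion. Concretely, split
\begin{equation*}
\int_{\frac{1}{2}+it}^{\infty+it}\log|L(s,\chi)|\, d\sigma = \int_{\frac{1}{2}}^{c}\log|L(\sigma+it,\chi)|\, d\sigma + \int_{c}^{\infty}\log|L(\sigma+it,\chi)|\, d\sigma.
\end{equation*}
The tail integral is handled immediately: for $\sigma\geq c>1$ the Dirichlet series gives $|L(\sigma+it,\chi)|\leq\zeta(\sigma)$, producing the summand $\int_{c}^{\infty}\log\zeta(\sigma)\, d\sigma$ in $a_{1}$.

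For the first integral, apply Rademacher's convexity bound (Lemma \ref{Rad567}) with the stated range of $c$. Taking logarithms yields
\begin{equation*}
\log|L(\sigma+it,\chi)| \leq \tfrac{c-\sigma}{2}\log\tfrac{Q|1+s|}{2\pi} + \log\zeta(c).
\end{equation*}
Next I would swap $|1+s|$ for $t$: writing $\log\tfrac{Q|1+s|}{2\pi}=\log\tfrac{Qt}{2\pi}+\log\tfrac{|1+s|}{t}$ and integrating from $\sigma=\tfrac{1}{2}$ to $\sigma=c$ gives the leading term $b_{1}\log\tfrac{Qt}{2\pi}$ with $b_{1}=\tfrac{1}{4}(c-\tfrac{1}{2})^{2}$, together with the constant $(c-\tfrac{1}{2})\log\zeta(c)$ and a remainder $\tfrac{1}{4}(c-\tfrac{1}{2})^{2}\log(|1+s|/t)$.

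The only delicate step — and the main obstacle — is extracting an explicit bound on that remainder. For $\tfrac{1}{2}\leq\sigma\leq c\leq\tfrac{5}{4}$ and $t\geq t_{0}$,
\begin{equation*}
\frac{|1+s|^{2}}{t^{2}} = 1+\frac{(1+\sigma)^{2}}{t^{2}} \leq 1+\frac{81}{16\, t_{0}^{2}},
\end{equation*}
so by $\sqrt{1+x}\leq 1+x/2$ and $\log(1+y)\leq y$ one obtains $\log(|1+s|/t)\leq 81/(32 t_{0}^{2})=:\epsilon$. Substituting the worst case $(c-\tfrac{1}{2})^{2}/4\leq 9/64$ shows the remainder is at most $729/(2048\, t_{0}^{2})$, which is precisely the first term of $a_{1}$ in (\ref{a609}). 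Collecting the three constant contributions and the single $\log$-term finishes the proof. Since each ingredient — Rademacher's lemma, the trivial tail bound, and the elementary estimate of $|1+s|/t$ — is already in place, the only genuine calculation is the last one.
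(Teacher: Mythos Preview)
Your proof is correct and follows essentially the same route as the paper: split at $\sigma=c$, bound the tail by $\zeta(\sigma)$, apply Rademacher's convexity estimate on $[\tfrac{1}{2},c]$, and replace $|1+s|$ by $t$ via the uniform bound $\log(|1+s|/t)\leq 81/(32t_{0}^{2})$ before integrating, then absorb the worst case $(c-\tfrac{1}{2})^{2}/4\leq 9/64$ into the constant. The only cosmetic slip is writing the remainder as $\tfrac{1}{4}(c-\tfrac{1}{2})^{2}\log(|1+s|/t)$ after integrating in~$\sigma$, since $|1+s|$ still depends on~$\sigma$; but you immediately bound it uniformly, so the argument is sound.
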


Rumely uses $t_{0}=50$, whence one can take the first term in (\ref{a609}) to be at most $1.5\cdot 10^{-4}$. 

The improvements in the following Lemmas arise from taking $d$ to be in the range $\frac{1}{2}<d\leq 1$ and choosing the value of $d$ optimally at the end of the proof. One writes
\begin{equation*}
\int_{\frac{1}{2}+it}^{\infty+it} \log|L(s, \chi)|\, d\sigma
\end{equation*}
as a sum of integrals in the style of (\ref{213}). For $\sigma>1$ one can write
\begin{equation*}
\begin{split}
\log|L(s, \chi)| &= -\sum_{p}\log|1-\chi(p)p^{-s}| \geq-\sum_{p}\log(1+p^{-\sigma}) \\
&= \sum_{p}\left\{ -\log(1-p^{-2\sigma}) + \log(1-p^{-\sigma}) \right\}= \log \zeta(2\sigma) - \log\zeta(\sigma),
\end{split}
\end{equation*}
whence
\begin{equation*}
\int_{\frac{1}{2}+it}^{\infty+it} \log|L(s, \chi)|\, d\sigma \geq \int_{\frac{1}{2}+it}^{\frac{1}{2}+d+it} \log\bigg|\frac{L(s, \chi)}{L(s+d, \chi)}\bigg|\, d\sigma + I(d),
\end{equation*}
where $I(d)$ is the same function defined in (\ref{defineI}) in $\S \ref{Proof of Theorem 1.2.2}$.
Now the integrand on the right of the above equation can be expanded using the Weierstrass Product\footnote{Note that equation (18) of \cite{Rumely} has $(Q/\pi)^{s}$, rather than $(Q/\pi)^{s/2}$.}, see, e.g.\ \cite[pp.\ 84-85]{Davenport}
\begin{equation}\label{DWe}
\left(\tfrac{Q}{\pi}\right)^{\frac{s}{2}}\Gamma\left(\tfrac{s+\delta}{2}\right)L(s, \chi) = \xi(s, \chi) = e^{A+Bs}\prod_{\rho} (1-\tfrac{s}{\rho})e^{\frac{s}{\rho}},
\end{equation}
with
\begin{equation}\label{daveporto}
B = -\lim_{T\rightarrow\infty}\sum_{|\rho|<T}\frac{1}{\rho}.
\end{equation}
In the same manner as Turing's Method for the zeta-function, one arrives at
\begin{equation}\label{firstterm}
\begin{split}
\int_{\frac{1}{2}+it}^{\infty+it} \log|L(s, \chi)|\, d\sigma &\geq \frac{d^{2}}{2}\log\frac{Q}{\pi} + \int_{\frac{1}{2}+it}^{\frac{1}{2} +d+it} \log\bigg|\frac{\Gamma\left(\frac{s+d+\delta}{2}\right)}{\Gamma\left(\frac{s+\delta}{2}\right)}\bigg|\, d\sigma \\
&+\sum_{\rho}\int_{\frac{1}{2}+it}^{\frac{1}{2}+d+it}\log\bigg|\frac{s-\rho}{s+d-\rho}\bigg|\, d\sigma +I(d)\\
&= \frac{d^{2}}{2}\log\frac{Q}{\pi} +I_{1} +I_{2} +I(d).
\end{split}
\end{equation}
As before, one uses the second mean-value theorem for integrals to address $I_{1}$, whence 
\begin{equation*}
I_{1} = \int_{\frac{1}{2}+it}^{\frac{1}{2} +d+it} \log\bigg|\frac{\Gamma\left(\frac{s+d+\delta}{2}\right)}{\Gamma\left(\frac{s+\delta}{2}\right)}\bigg|\, d\sigma = \frac{d^{2}}{2}\Re\frac{\Gamma'\left(\tau+\frac{it}{2}\right)}{\Gamma\left(\tau+\frac{it}{2}\right)},
\end{equation*}
for
\begin{equation*}
\tau \in \left(\tfrac{1}{4}+\tfrac{\delta}{2},\, d +\tfrac{1}{4} + \tfrac{\delta}{2}\right) \subset \left(\tfrac{1}{4},\, d+\tfrac{3}{4}\right),
\end{equation*}
since $\delta$ is either $0$ or $1$. Using Lemma \ref{l8}, one has that
\begin{equation}\label{670}
I_{1} \geq \frac{d^{2}}{2}\left\{\log\frac{t}{2} - \epsilon'\right\},
\end{equation}
where 
\begin{equation}\label{670b}
\epsilon' = \frac{11}{t_{0}^{2}},
\end{equation}
and, since, $t_{0}>50$, it follows that $\epsilon'<5\cdot 10^{-3}$.
The application of Lemma \ref{l7} to $I_{2}$, with zeroes $\rho$ paired with $1-\overline{\rho}$ gives
\begin{equation*}\label{oncebooker}
I_{2} \geq -d^{2}(\log 4)\sum_{\rho}\Re\left(\frac{1}{d+\frac{1}{2} +it -\rho}\right).
\end{equation*}
Now logarithmically differentiate the Weierstrass product in (\ref{DWe}), take real parts, and use (\ref{daveporto}), to arrive at
\begin{equation}\label{almost}
\sum_{\rho}\Re\left(\frac{1}{s-\rho}\right) = \frac{1}{2}\log\frac{Q}{\pi} + \frac{1}{2} \Re\left(\frac{\Gamma'\left(\frac{s+\delta}{2}\right)}{\Gamma\left(\frac{s+\delta}{2}\right)}\right) + \Re\left(\frac{L'(s, \chi)}{L(s, \chi)}\right).
\end{equation}
For $\sigma=\Re(s) >1$, one can write
\begin{equation}\label{Rel}
\Re\left(\frac{L'(s, \chi)}{L(s, \chi)}\right) = \Re\left(\sum_{p}\frac{\chi(p)\log p}{p^{s}-\chi(p)}\right) \leq \sum_{p}\frac{\log p}{p^{\sigma} -1} = -\frac{\zeta'(\sigma)}{\zeta(\sigma)}.
\end{equation}
Thus when $s=d+\frac{1}{2}+it$, an application of Lemma \ref{l8} to (\ref{almost}) together with (\ref{Rel}) gives
\begin{equation}\label{suban}
I_{2} \geq -d^{2}(\log 4)\left(\frac{1}{2}\log\frac{Qt}{2\pi}  + \frac{5}{t_{0}^{2}} - \frac{\zeta'(\frac{1}{2} +d)}{\zeta(\frac{1}{2} +d)}\right).
\end{equation}
The results for $I_{2}$, contained in (\ref{suban}), and for $I_{1}$, contained in (\ref{670}) and (\ref{670b}), can be combined with (\ref{firstterm}) to prove
\begin{lemma}\label{wolfb}
For $t>t_{0}>50$ and for a parameter $d$ satisfying the condition $\frac{1}{2}<d\leq 1$, the following estimate holds
\begin{equation*}
-\int_{\frac{1}{2}+it}^{\infty+it}\log|L(s, \chi)|\, d\sigma \leq a_{2} + b_{2}\log\frac{Qt}{2\pi},
\end{equation*}
where
\begin{equation*}
\begin{split}
a &= \frac{13d^{2}}{t_{0}^{2}} - d^{2}(\log 4)\frac{\zeta'\left(\tfrac{1}{2}+d\right)}{\zeta\left(\tfrac{1}{2}+d\right)} - \frac{1}{2}\int_{2d+1}^{\infty}\log \zeta(\sigma)\, d\sigma\\ 
&+\int_{\frac{1}{2} +d}^{\infty}\log \zeta(\sigma)\, d\sigma-
+ \frac{1}{2}\int_{2d+1}^{4d+1}\log \zeta(\sigma)\, d\sigma + \int_{\frac{1}{2}+d}^{\frac{1}{2}+2d}\log \zeta(\sigma)\, d\sigma,
\end{split}
\end{equation*}
and
\begin{equation*}
b= \frac{d^{2}}{2}(\log 4 -1).
\end{equation*}
\end{lemma}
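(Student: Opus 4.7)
The proof is a direct assembly of the lower bounds already in place: nothing new needs to be proved, only rearranged. My plan is to take the inequality (\ref{firstterm}), substitute the bound (\ref{670})--(\ref{670b}) for $I_{1}$ and the bound (\ref{suban}) for $I_{2}$, expand $I(d)$ via its defining formula (\ref{defineI}), and then regroup so that the separate $\log Q$ and $\log t$ pieces coalesce into the single $\log(Qt/2\pi)$ appearing in the statement.

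Negating (\ref{firstterm}) yields
\begin{equation*}
-\int_{\frac{1}{2}+it}^{\infty+it}\log|L(s,\chi)|\,d\sigma \leq -\frac{d^{2}}{2}\log\frac{Q}{\pi}-I_{1}-I_{2}-I(d).
\end{equation*}
Substituting the stated bounds, the logarithmic pieces on the right-hand side are $-\tfrac{d^{2}}{2}\log(Q/\pi)$, the term $-\tfrac{d^{2}}{2}\log(t/2)$ coming from $-I_{1}$, and the term $\tfrac{d^{2}}{2}(\log 4)\log(Qt/2\pi)$ coming from $-I_{2}$. Using the identity $-\log(Qt/\pi)+\log 2=-\log(Qt/2\pi)$, these three sum cleanly to $\tfrac{d^{2}}{2}(\log 4-1)\log(Qt/2\pi)$, which gives the coefficient $b_{2}$ claimed in the lemma.

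The remaining terms contribute to $a_{2}$. They consist of the factor $-d^{2}(\log 4)\,\zeta'(\tfrac{1}{2}+d)/\zeta(\tfrac{1}{2}+d)$ produced by $-I_{2}$, the four $\log\zeta$ integrals obtained by expanding $-I(d)$ according to (\ref{defineI}), and the two small error pieces $\tfrac{d^{2}}{2}\epsilon'$ from (\ref{670b}) and $5d^{2}(\log 4)/t_{0}^{2}$ from (\ref{suban}). Consolidating the errors,
\begin{equation*}
\frac{d^{2}}{t_{0}^{2}}\left(\frac{11}{2}+5\log 4\right)<\frac{13\,d^{2}}{t_{0}^{2}},
\end{equation*}
since $\log 4<1.4$, which matches the coefficient $13$ in the stated $a_{2}$.

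The only genuine obstacle is careful sign-tracking, particularly when expanding $-I(d)$ so that each of the four $\log\zeta$ integrals appears with the correct sign in the final constant; no new analytic ingredient is required beyond the lemmas already assembled.
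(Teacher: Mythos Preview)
Your proposal is correct and follows exactly the paper's own route: the paper simply states that the bounds for $I_{1}$ in (\ref{670})--(\ref{670b}) and for $I_{2}$ in (\ref{suban}) are combined with (\ref{firstterm}) to yield the lemma, and you have carried out precisely that combination, including the check that $\tfrac{11}{2}+5\log 4<13$ which the paper leaves implicit. There is no difference in approach to report.
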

Lemmas \ref{L563}, \ref{P1Di} and \ref{wolfb} prove at once
\begin{theorem}
If $t_{2}>t_{1}>t_{0}>50$ and $c$ and $d$ are parameters such that $1<c\leq \frac{5}{4}$ and $\frac{1}{2}<d\leq 1$, the following estimate holds
\begin{equation*}
\bigg|\int_{t_{1}}^{t_{2}}S(t, \chi)\, dt\bigg| \leq a + b \log\left(\frac{Qt_{2}}{2\pi}\right),
\end{equation*}
where
\begin{equation}\label{finala}
\begin{split}
a\pi &= (c-\tfrac{1}{2})\log\zeta(c) + \int_{c}^{\infty}\log\zeta(\sigma)\, d\sigma - d^{2}(\log 4)\frac{\zeta'\left(\tfrac{1}{2}+d\right)}{\zeta\left(\tfrac{1}{2}+d\right)} \\
&- \frac{1}{2}\int_{2d+1}^{\infty}\log \zeta(\sigma)\, d\sigma 
+\int_{\frac{1}{2} +d}^{\infty}\log \zeta(\sigma)\, d\sigma\\
&- \frac{1}{2}\int_{2d+1}^{4d+1}\log \zeta(\sigma)\, d\sigma + \int_{\frac{1}{2}+d}^{\frac{1}{2}+2d}\log \zeta(\sigma)\, d\sigma +\frac{15d^{2}}{t_{0}^{2}}
\end{split}
\end{equation}
and 
\begin{equation}\label{finalb}
2b\pi = \frac{1}{2}\left(c-\tfrac{1}{2}\right)^{2} + d^{2}(\log 4 -1).
\end{equation}
\end{theorem}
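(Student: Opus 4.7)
The theorem is the immediate payoff of chaining Lemmas \ref{L563}, \ref{P1Di}, and \ref{wolfb}, so my plan is purely assembly: no new analytic input is required.

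I would begin by invoking Lemma \ref{L563} to write
$$\pi \int_{t_1}^{t_2} S(t,\chi)\,dt = J(t_2) - J(t_1), \qquad J(t) := \int_{\frac{1}{2}+it}^{\infty+it} \log|L(s,\chi)|\,d\sigma,$$
thereby reducing the problem to bounding $|J(t_2) - J(t_1)|$ in both signs. Lemma \ref{P1Di} supplies $J(t) \leq a_1 + b_1 \log(Qt/2\pi)$, and Lemma \ref{wolfb} supplies $-J(t) \leq a_2 + b_2 \log(Qt/2\pi)$. Applying Lemma \ref{P1Di} at one endpoint and Lemma \ref{wolfb} at the other (and then swapping their roles to handle the opposite sign) yields
$$\pm\bigl(J(t_2) - J(t_1)\bigr) \leq (a_1 + a_2) + b_1\log(Qt_i/2\pi) + b_2\log(Qt_j/2\pi),$$
with $(i,j) \in \{(2,1),(1,2)\}$. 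Since $t_1 < t_2$, monotonicity of $\log$ lets me replace both logarithmic terms by $\log(Qt_2/2\pi)$, giving the uniform bound $(a_1+a_2) + (b_1+b_2)\log(Qt_2/2\pi)$, valid for either sign. Dividing by $\pi$ then produces the claimed inequality with $\pi a = a_1 + a_2$ and $\pi b = b_1 + b_2$.

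It then remains only to verify that these sums match the explicit expressions (\ref{finala}) and (\ref{finalb}). The $b$-identification is direct: $\pi b = \frac{1}{4}(c-\frac{1}{2})^2 + \frac{d^2}{2}(\log 4 - 1)$, which is exactly half of (\ref{finalb}). For the $a$-identification, I would add the expressions from (\ref{a609}) and from Lemma \ref{wolfb} term by term, noting that the two $t_0^{-2}$ error terms $\frac{729}{2048 t_0^2}$ and $\frac{13d^2}{t_0^2}$ combine — using $d > \frac{1}{2}$ so that $\frac{729}{2048 t_0^2} \leq \frac{2d^2}{t_0^2}$ — into the single crude term $\frac{15 d^2}{t_0^2}$ displayed in (\ref{finala}).

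I do not anticipate any genuine obstacle. The nearest thing to a hazard is clerical: the four $\log \zeta(\sigma)$-integrals that enter through $I(d)$ of (\ref{defineI}) flip sign as one passes from $J(t)$ to $-J(t)$, and one must check that in the final sum the signs end up as displayed in (\ref{finala}). Equally, one must be careful not to double-count the $\log(Q t/2\pi)$-factor when applying the two lemmas at different endpoints — the key point is that the sum $b_1\log(Qt_i/2\pi) + b_2\log(Qt_j/2\pi)$ collapses to $(b_1+b_2)\log(Qt_2/2\pi)$ only after the monotonicity step, and not before. Both checks are routine.
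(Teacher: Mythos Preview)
Your proposal is correct and matches the paper's approach exactly: the paper simply states that Lemmas \ref{L563}, \ref{P1Di} and \ref{wolfb} ``prove at once'' the theorem, and you have spelled out precisely the assembly, including the sign-handling at the two endpoints and the absorption of $\tfrac{729}{2048t_0^2}$ into $\tfrac{2d^2}{t_0^2}$ via $d>\tfrac{1}{2}$ to reach the displayed $\tfrac{15d^2}{t_0^2}$.
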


\subsection{Calculations and improvements}\label{DirCalc}

In (\ref{finala}) and (\ref{finalb}) Rumely has $c=\frac{5}{4}$ and $d=1$ as well as $1.48$ in place of $\log 4 \approx 1.38$, and thus he calculates\footnote{The number $1.1242$ quoted by Rumely in his Theorem 2 is a result of a rounding error from his Lemma 2.}
\begin{equation*}
a = 1.839; \quad b= 0.1212.
\end{equation*}
Even with the same values of $c$ and $d$, the inclusion of Lemma \ref{l7} gives the result here that
\begin{equation*}
a(\tfrac{5}{4}, 1) = 1.794 \quad b(\tfrac{5}{4}, 1) = 0.1063.
\end{equation*}
For the values of $Q=100$, $t_{2}=2500$ the quantity $B(Q, t_{2})$ --- defined in Theorem \ref{thmrum} --- was minimised over two intervals using a computer programme, similarly to \S\ref{TuringCalc}. This yielded the optimal value for $B(Q, t_{2})$ at $c=1.17$ and $d=0.88$, whence the constants
\begin{equation*}
a(1.17, 0.88) = 1.9744, \quad b(1.17, 0.88) = 0.0833,
\end{equation*}
which appear in Theorem \ref{TurDir}.

\section{Dedekind zeta-functions}\label{TMDZF}
Let $K$ be a number field of degree $N$ with discriminant $D$ and with ring of integers $\mathcal{O}_{K}$. Let the signature of the field be $(r_{1}, r_{2})$, by which it is meant that $K$ has $r_{1}$ real embeddings and $r_{2}$ pairs of complex embeddings, whence $N=r_{1} +2r_{2}$. Then for $\Re(s) >1$ the Dedekind zeta-function is defined as
\begin{equation*}\label{dedekind}
\zeta_{K}(s) = \sum_{\mathfrak{a}\in \mathcal{O}_{K}} (N\mathfrak{a})^{-s} = \sum_{n\geq 1}a_{n} n^{-s},
\end{equation*}
where $\mathfrak{a}$ ranges over the non-zero ideals of $\mathcal{O}_{K}$ and $a_{n}$ is the number of ideals with norm $n$. Like the Riemann zeta-function, the Dedekind zeta-function can be extended via analytic continuation to the entire complex plane where it is defined as a meromorphic function with a simple pole at $s=1$. If 
\begin{equation*}\label{lamda}
\Lambda_{K}(s) = \Gamma(\tfrac{s}{2})^{r_{1}} \Gamma(s)^{r_{2}} \left(\frac{\sqrt{|D_{K}|}}{\pi^{\frac{N}{2}}2^{r_{2}}}\right)^{s} \zeta_{K}(s),
\end{equation*}
then the Dedekind zeta-function satisfies the functional equation
\begin{equation}\label{functional}
\Lambda_{K}(s) = \Lambda_{K} (1-s).
\end{equation}
One can define, see, e.g.\ \cite{Tollis}, the functions analogous to $Z(t)$ and $\theta(t)$ by
\begin{equation*}
Z_{K}(t) = e^{i\theta_{K}(t)}\zeta_{K}(\tfrac{1}{2}+it).
\end{equation*}
Analogous to the function $S(t)$ define,
\begin{equation*}\label{Sdef}
S_{K}(t) = \frac{1}{\pi} \arg \zeta_{K}(\tfrac{1}{2} +it); \quad S^{1}_{K}(t) = \int_{0}^{t}S_{K}(u)\, du,
\end{equation*}
where the valuation of the argument is determined, if $t$ is not an ordinate of a zero, by continuous variation along the line from $\infty +it$ to $\frac{1}{2} +it$ and $S(0) =0$. The modified Turing criterion for Dedekind zeta-functions relies on the following
\begin{theorem}\label{650}
Given $t_{0}>0$ there are positive constants $a, b$ and $g$ such that, whenever $t_{2}>t_{1}>t_{0}$ the following estimate holds
\begin{equation}\label{TuringTol}
\bigg|\int_{t_{1}}^{t_{2}} S_{K}(t)\, dt \bigg| \leq a+bN + g\log\left(|D_{K}| \left(\frac{t_{2}}{2\pi}\right)^{N}\right).
\end{equation}
\end{theorem}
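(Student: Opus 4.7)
My plan is to reproduce the three-stage template of \S\ref{Proof of Theorem 1.2.2} and \S\ref{Proof of Theorem 1.3.2} following Tollis \cite{Tollis}, the only new feature being the book-keeping of the discriminant $D_{K}$ and the degree $N$ that enter through the extra $\Gamma$-factors and the height of the functional equation (\ref{functional}). The first step is the Littlewood-style identity, analogous to Lemmas \ref{Ll1} and \ref{L563},
\begin{equation*}
\pi\int_{t_{1}}^{t_{2}} S_{K}(t)\,dt = \Re\int_{\frac{1}{2}+it_{2}}^{\infty+it_{2}}\log\zeta_{K}(s)\,ds - \Re\int_{\frac{1}{2}+it_{1}}^{\infty+it_{1}}\log\zeta_{K}(s)\,ds,
\end{equation*}
obtained by applying Littlewood's theorem to $\log\zeta_{K}$ on a rectangle with the pole at $s=1$ indented in the standard way. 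The task then reduces to bounding one copy of $\Re\int_{\frac{1}{2}+it}^{\infty+it}\log\zeta_{K}(s)\,ds$ from above and the other from below.

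For the upper bound I would combine the trivial estimate $|\zeta_{K}(s)|\leq\zeta(c)^{N}$ on $\Re s=c>1$, immediate from the Euler product, with the functional equation (\ref{functional}): Stirling applied to the $\Gamma(s/2)^{r_{1}}\Gamma(s)^{r_{2}}$ factors of $\Lambda_{K}$ converts the symmetry into a bound of order $(|D_{K}|(t/2\pi)^{N})^{c-\frac{1}{2}}$ on the reflected line $\sigma=1-c$, after which Phragm\'en--Lindel\"of (Lemma \ref{l5}) yields a convexity estimate in the strip $\frac{1}{2}\leq\sigma\leq c$ of shape
\begin{equation*}
|\zeta_{K}(s)| \leq (1+\delta)\zeta(c)^{N}\left\{|D_{K}|\bigl(\tfrac{t}{2\pi}\bigr)^{N}\right\}^{(c-\sigma)/(2(c-1/2))}.
\end{equation*}
Integrating $\log|\zeta_{K}|$ across $[\tfrac{1}{2},c]$ and trivially beyond produces an upper bound of shape $a_{1}+Nb_{1}+g_{1}\log(|D_{K}|(t/2\pi)^{N})$.

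For the lower bound I would split at a parameter $\tfrac{1}{2}<d\leq 1$ exactly as in (\ref{213}); the two tail integrals to the right of $\sigma=1$ are controlled below by $-N$ times the corresponding integrals of $\log\zeta$ via the Euler-product inequality used after (\ref{firstterm}) in \S\ref{Proof of Theorem 1.3.2}. The remaining piece $\int\log|\zeta_{K}(s)/\zeta_{K}(s+d)|\,ds$ is expanded by logarithmic differentiation of the Weierstrass product $\Lambda_{K}(s)=e^{A+Bs}\prod_{\rho}(1-s/\rho)e^{s/\rho}$: Booker's Lemma \ref{l7} with the pairing $\rho\leftrightarrow 1-\overline{\rho}$ handles the zero sum, Lemma \ref{l8} applied separately to the $r_{1}$ factors $\Gamma(s/2)$ and the $r_{2}$ factors $\Gamma(s)$ handles the gamma contribution, and the Euler product gives $|\zeta_{K}'/\zeta_{K}(\tfrac{1}{2}+d+it)|\leq -N\zeta'(\tfrac{1}{2}+d)/\zeta(\tfrac{1}{2}+d)$. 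The outcome is an estimate $-\Re\int\log\zeta_{K} \leq a_{2}+Nb_{2}+g_{2}\log(|D_{K}|(t/2\pi)^{N})$. Adding the two bounds establishes (\ref{TuringTol}) with $a=a_{1}+a_{2}$, $b=b_{1}+b_{2}$ and $g=g_{1}+g_{2}$; optimisation of $c$ and $d$ then proceeds as in \S\ref{TuringCalc} and \S\ref{DirCalc}.

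The main obstacle is the book-keeping in the lower-bound step: the $r_{1}$ copies of $\Gamma(s/2)$ produce digamma contributions of type $\log(t/2)$ while the $r_{2}$ copies of $\Gamma(s)$ produce $\log t$, and only after carefully combining these with the isolated $\tfrac{d}{2}\log(|D_{K}|/\pi^{N})$ coming from the constant $B$ in the Weierstrass product does everything collapse into a single clean $g\log(|D_{K}|(t/2\pi)^{N})$ term with linear $N$-dependence; a careless grouping leaves a bound depending separately on $r_{1}$ and $r_{2}$ and destroys the shape of (\ref{TuringTol}).
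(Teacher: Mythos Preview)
Your proposal is correct and follows essentially the same three-stage template as the paper: Littlewood identity (Lemma \ref{lwtollis}), convexity upper bound (Lemma \ref{1stDed}), and Weierstrass-product lower bound via Lemmas \ref{l7} and \ref{l8} (Lemma \ref{LastDed}), with the $r_{1}$ and $r_{2}$ digamma contributions recombined exactly as you anticipate. The only cosmetic difference is that the paper does not derive the convexity estimate from scratch via Lemma \ref{l5} and the functional equation as you outline, but instead quotes Rademacher's ready-made bound \cite[Thm~4]{Rademacher} (here Lemma \ref{6701}) as a black box; this carries an extra factor $3\,|(1+s)/(1-s)|$ that accounts for the $\log 3$ term in $a_{1}$.
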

If one denotes the quadruple $(a, b, g, t_{0})$ as those numbers satisfying (\ref{TuringTol}), then the work of Tollis \cite{Tollis} leads to the quadruple $(0.2627, 1.8392, 0.122, 40)$. 
Analogous to Theorem \ref{thmrum}, is the following
\begin{theorem}[Tollis]
For $t_{2}>t_{1}>40$ then
\begin{equation*}
\begin{split}
\bigg|\int_{t_{1}}^{t_{2}} S_{K}(t) \frac{\theta'_{K}(t)}{\pi}\, dt\bigg| &\leq \left(\frac{b}{2\pi}N + \frac{a}{2\pi}\right)\log\left(|D_{K}|\left(\frac{t_{2}}{2\pi}\right)^{N}\right)\\
&+ \frac{g}{2\pi}\log^{2}\left(|D_{K}|\left(\frac{t_{2}}{2\pi}\right)^{N}\right) \\
&= B(D_{K}, t_{2}, N).
\end{split}
\end{equation*}
\end{theorem}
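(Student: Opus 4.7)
The approach is to combine the estimate from Theorem \ref{650} with the monotonicity of the weight $\theta'_K(t)/\pi$ via the second mean value theorem for integrals (Bonnet's form). Two preliminary facts are needed: first, that $\theta_K(t)$ is ultimately monotonically increasing (which is what enables the definition of Gram points for $\zeta_K$, in the same spirit as the discussion in \S\ref{TMDLF}); and second, that $\theta'_K(t)/\pi$ is itself monotonically increasing on $[t_1, t_2]$. Both are consequences of Stirling's formula applied to the $r_1$ copies of $\Gamma(s/2)$ and the $r_2$ copies of $\Gamma(s)$ that appear in $\Lambda_K$, and they hold comfortably once $t_1 > 40$.

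Given this monotonicity, Bonnet's theorem produces a point $c \in [t_1, t_2]$ with
\begin{equation*}
\int_{t_1}^{t_2} S_K(t)\,\frac{\theta'_K(t)}{\pi}\, dt = \frac{\theta'_K(t_2)}{\pi}\int_{c}^{t_2} S_K(t)\, dt.
\end{equation*}
Since $c \geq t_1 > t_0$, Theorem \ref{650} applies directly to the inner integral, yielding
\begin{equation*}
\bigg|\int_{c}^{t_2} S_K(t)\, dt\bigg| \leq a + bN + g\log\left(|D_K|\left(\frac{t_2}{2\pi}\right)^N\right).
\end{equation*}

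For the weight $\theta'_K(t_2)/\pi$, the same Stirling calculation used to extract the leading term of $\theta_K(t)$ delivers the pointwise bound
\begin{equation*}
\frac{\theta'_K(t_2)}{\pi} \leq \frac{1}{2\pi}\log\left(|D_K|\left(\frac{t_2}{2\pi}\right)^N\right),
\end{equation*}
with a subleading correction of order $N/t_2$ that can be controlled using Lemma \ref{l8}. Multiplying these two estimates and expanding produces precisely the quantity $B(D_K, t_2, N)$ displayed in the statement.

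The main obstacle is the careful bookkeeping of the subleading terms in Stirling's formula: one wants the pointwise bound on $\theta'_K/\pi$ above to hold without a multiplicative correction that would spoil the clean quadratic-in-$\log$ form of $B(D_K, t_2, N)$, and any genuinely additive Stirling error must be absorbed into the constants $a, b, g$ inherited from Theorem \ref{650}. A related issue is verifying that $\theta'_K$ is strictly increasing throughout $[t_1, t_2]$ whenever $t_1 > 40$, so that Bonnet's theorem applies without having to excise a small initial interval; the quantitative $\Gamma'/\Gamma$ estimate of Lemma \ref{l8}, applied to each of the $r_1 + r_2$ Gamma factors in $\Lambda_K$, suffices for both tasks.
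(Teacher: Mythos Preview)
The paper does not actually prove this theorem; it is quoted as a result of Tollis, parallel to Theorem \ref{thmrum} of Rumely, which is likewise stated without proof. The only hint the paper offers toward either argument is the remark after Theorem \ref{thmrum} that ``the constant $0.1592$ comes from applying Stirling's formula to the function $\theta(t,\chi)$''; since $0.1592 \approx 1/(2\pi)$, this is precisely your key estimate $\theta'_K(t)/\pi \lesssim \tfrac{1}{2\pi}\log\bigl(|D_K|(t/2\pi)^N\bigr)$, so your identification of the mechanism is the intended one.

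Your route via Bonnet's form of the second mean-value theorem is correct and is the standard way to pass from a bound on $\int S_K$ to a bound on $\int S_K\,\theta'_K/\pi$: with $\theta'_K/\pi$ non-negative and non-decreasing on $[t_1,t_2]$, Bonnet gives exactly the single-endpoint factorisation you wrote down, and Theorem \ref{650} handles the remaining integral. An equivalent and equally common variant is integration by parts against $S^1_K(t)=\int_{t_1}^{t}S_K$, using $\theta''_K\ge 0$; either way the same two ingredients --- Theorem \ref{650} and the Stirling estimate for $\theta'_K$ --- do all the work.

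The one point that genuinely needs care is the one you flag: the pointwise inequality $\theta'_K(t_2)/\pi \le \tfrac{1}{2\pi}\log\bigl(|D_K|(t_2/2\pi)^N\bigr)$ is only true up to a Stirling correction of size $O(N/t_2^2)$, and its sign is not automatically favourable. For $t_1>40$ this defect, multiplied by the bound from Theorem \ref{650}, is dominated by the displayed terms of $B(D_K,t_2,N)$ (for instance the cross term $bN^2/t_0^2$ is absorbed by $\tfrac{b}{2\pi}N\log\bigl(|D_K|(t_2/2\pi)^N\bigr) \ge \tfrac{b}{2\pi}N^2\log(t_1/2\pi)$), so the stated bound survives; but in a fully explicit write-up this absorption should be made visible rather than asserted.
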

For a given $D_{K}, t_{2}, N$ one wishes to choose the constants $a, b$ and $g$ so as to minimise $B(D_{K}, t_{2}, N)$.
For the sample values $N=4$, $D_{K}=1000$ and $t_{2} = 80$ one finds that Tollis's constants give $B(D_{K}, t_{2}, N)\approx 26.44$. As will be shown in \S \ref{Dedcalc}, very little improvement can be given on the constants of Tollis. Nevertheless the inclusion of Lemma \ref{l7} is enough to prove
\begin{theorem}\label{thm657}
Given $t_{2}>t_{1}>40$ the following estimate holds
\begin{equation}\label{Turing}
\bigg|\int_{t_{1}}^{t_{2}} S_{K}(t)\, dt \bigg| \leq 0.264+1.843N + 0.105\log\left(|D_{K}| \left(\frac{t_{2}}{2\pi}\right)^{N}\right).
\end{equation}
\end{theorem}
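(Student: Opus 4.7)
The plan closely parallels the proofs in \S\ref{Proof of Theorem 1.2.2} and \S\ref{Proof of Theorem 1.3.2}, applied now to $\zeta_{K}$ through its completed form $\Lambda_{K}$. First, I would establish the Dedekind analogue of Lemma \ref{Ll1}: applying Littlewood's lemma to $\log \zeta_{K}(s)$ on the rectangle with vertices $\frac{1}{2}+it_{1}$, $X+it_{1}$, $X+it_{2}$, $\frac{1}{2}+it_{2}$ and letting $X\to\infty$, one obtains
\begin{equation*}
\pi\int_{t_{1}}^{t_{2}} S_{K}(t)\, dt = \Re\int_{\frac{1}{2}+it_{2}}^{\infty+it_{2}}\log\zeta_{K}(s)\, ds - \Re\int_{\frac{1}{2}+it_{1}}^{\infty+it_{1}}\log\zeta_{K}(s)\, ds,
\end{equation*}
so the task reduces to bounding each horizontal integral.

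For the upper bound of the first integral, I would apply a Phragm\'en--Lindel\"of convexity argument to $(s-1)\zeta_{K}(s)$ on the strip $\frac{1}{2}\leq\sigma\leq c$ with $1<c\leq\frac{5}{4}$. On $\sigma=c$ one has $|\zeta_{K}(s)|\leq\zeta(c)^{N}$ from the Euler product, while on $\sigma=\frac{1}{2}$ the functional equation (\ref{functional}) combined with Stirling gives a bound of the shape $|\zeta_{K}(\frac{1}{2}+it)|\ll(|D_{K}|(t/2\pi)^{N})^{1/2}$. Interpolating via Lemma \ref{l5}, integrating $\log|\zeta_{K}|$ over $\frac{1}{2}\leq\sigma\leq c$ and adding the tail $\int_{c}^{\infty}\log\zeta_{K}(\sigma)\, d\sigma\leq N\int_{c}^{\infty}\log\zeta(\sigma)\, d\sigma$ yields an estimate of the form $a_{1}+b_{1}N+g_{1}\log(|D_{K}|(t/2\pi)^{N})$.

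For the lower integral I would imitate the device of (\ref{213}): with $\frac{1}{2}<d\leq 1$, write
\begin{equation*}
\Re\int_{\frac{1}{2}+it}^{\infty+it}\log\zeta_{K}(s)\, ds = \int_{\frac{1}{2}+it}^{\frac{1}{2}+d+it}\log\left|\frac{\zeta_{K}(s)}{\zeta_{K}(s+d)}\right|\, ds + I_{K}(d),
\end{equation*}
where $I_{K}(d)$ is a finite sum of convergent integrals of $\log\zeta_{K}(\sigma)$ that may be controlled via $\log\zeta_{K}\leq N\log\zeta$. Expanding the integrand through the Hadamard product for $\Lambda_{K}(s)$ splits the remaining piece into: (i) a contribution from $r_{1}$ copies of $\Gamma(s/2)$ and $r_{2}$ copies of $\Gamma(s)$, handled by Lemma \ref{l8} and collectively producing the leading $(N/2)\log(t/2)$; (ii) a discriminant contribution $\tfrac{d^{2}}{2}\log|D_{K}|$; and (iii) a sum over zeros, on which Booker's Lemma \ref{l7}, with the pairing $\rho\leftrightarrow 1-\overline{\rho}$, delivers the improvement of $\log 4$ in place of Tollis's $1.48$. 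The resulting upper bound has the form $a_{2}+b_{2}N+g_{2}\log(|D_{K}|(t/2\pi)^{N})$, with $(a_{2},b_{2})$ expressed in terms of $\zeta'(\frac{1}{2}+d)/\zeta(\frac{1}{2}+d)$ and the integrals defining $I_{K}(d)$, and $g_{2}=\tfrac{d^{2}}{2}(\log 4 - 1)$.

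Combining the two bounds gives an estimate $a(c,d)+b(c,d)N+g(c,d)\log(|D_{K}|(t_{2}/2\pi)^{N})$, and the final step is a numerical optimisation of $(c,d)$, carried out as in \S\ref{TuringCalc} and \S\ref{DirCalc}, to minimise $B(D_{K},t_{2},N)$ at the sample point $D_{K}=1000$, $N=4$, $t_{2}=80$. The main obstacle will be the bookkeeping in steps two and three: because the bound is required in the tied form $a+bN+g\log(|D_{K}|(t_{2}/2\pi)^{N})$ where $g$ multiplies both $\log|D_{K}|$ and $N\log(t_{2}/2\pi)$, one must carefully balance terms that naturally split as $\log|D_{K}|$, $N\log t_{2}$, and pure $N$-multiples so that all three bands come out nearly optimal. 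As the discussion before the theorem warns, only $g$ admits a substantive reduction (from $0.122$ down to $0.105$) — driven entirely by Lemma \ref{l7} — while $a$ and $b$ end up essentially at Tollis's values after the $(c,d)$-search.
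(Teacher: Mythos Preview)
Your plan is essentially the paper's own, with one substantive difference worth flagging. For the upper integral you propose to run Phragm\'en--Lindel\"of on $(s-1)\zeta_{K}(s)$ via Lemma~\ref{l5}, manufacturing the line $\sigma=\tfrac{1}{2}$ bound from the functional equation and Stirling; the paper instead quotes Rademacher's ready-made convexity estimate (Lemma~\ref{6701}, i.e.\ Theorem~4 of \cite{Rademacher}), which already packages the discriminant, the $|1+s|/|1-s|$ factor from the pole, and the constant $3$. Your route is more self-contained but forces you to track these constants yourself, and the $\log 3$ that feeds into the constant $a$ comes precisely from Rademacher's factor of $3$ --- if you derive a different absolute constant on $\sigma=\tfrac{1}{2}$ you will not reproduce $a=0.264$. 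Everything else --- the splitting at $d$, the Hadamard product for $\Lambda_{K}$, Lemma~\ref{l8} on the $\Gamma$-factors, Booker's Lemma~\ref{l7} on the zero-sum, and the bound $\log|\zeta_{K}|\geq N(\log\zeta(2\sigma)-\log\zeta(\sigma))$ for the tail integrals --- matches the paper exactly. One final point: you anticipate an optimisation in $(c,d)$, but in \S\ref{Dedcalc} the paper reports that for the sample parameters the search returns the \emph{trivial} choice $c=\tfrac{5}{4}$, $d=1$; the entire numerical gain over Tollis (in $g$, from $0.122$ to $0.105$) comes from Lemma~\ref{l7} alone, as you correctly suspect.
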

The improvements to Tollis's work will most likely be of use in the search for zeroes of Dedekind zeta-functions of large discriminant or degree but at \textit{small} height. For this reason the constant $t_{0}$ has been retained in the following equations, and appears in Theorem \ref{L890} from which Theorem \ref{thm657} is derived.
\subsection{Proof of Theorem \ref{thm657}}
As before, one begins by proving
\begin{lemma}\label{lwtollis}
\begin{equation*}
\pi\int_{t_{1}}^{t_{2}}S_{K}(t)\, dt = \int_{\frac{1}{2} +it_{2}}^{\infty +it_{2}} \log|\zeta_{K}(s)|\, ds - \int_{\frac{1}{2} +it_{1}}^{\infty +it_{1}} \log|\zeta_{K}(s)|\, ds.
\end{equation*}
\end{lemma}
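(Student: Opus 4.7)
The plan is to mimic the proof of the Riemann-zeta analogue, Lemma \ref{Ll1}, which is in turn based on Littlewood's theorem for the integral of $\log f$ around a rectangle. The proof of the Dirichlet $L$-function version (Lemma \ref{L563}) is the closest template; one need only carry the same argument over to $\zeta_K$, noting that $\zeta_K$, like $\zeta$, has a single simple pole at $s=1$ and a Dirichlet series converging for $\Re s > 1$.

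Concretely, I would fix a large $X > 1$ and consider the rectangle $R$ with vertices $\tfrac{1}{2}+it_1$, $X+it_1$, $X+it_2$, $\tfrac{1}{2}+it_2$; for $t_1 > 0$ the pole at $s=1$ lies outside $R$. Choose a single-valued branch of $\log\zeta_K(s)$ on $R$ by inserting horizontal cuts leftward from each zero of $\zeta_K$ in $R$, and apply Littlewood's contour formula to $\int_{\partial R}\log\zeta_K(s)\,ds$. Taking the appropriate real/imaginary combination and passing through the Cauchy--Riemann relation $\partial_\sigma\arg\zeta_K = -\partial_t\log|\zeta_K|$ --- equivalently, using the continuous-variation definition
\begin{equation*}
\pi S_K(t) = -\int_{1/2}^{\infty}\partial_\sigma\arg\zeta_K(\sigma+it)\,d\sigma = \int_{1/2}^{\infty}\partial_t\log|\zeta_K(\sigma+it)|\,d\sigma,
\end{equation*}
and then integrating in $t$ with an interchange of integrals --- one arrives at the claimed identity, once $X\to\infty$. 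In that limit the right vertical contribution $\int_{t_1}^{t_2}\log|\zeta_K(X+it)|\,dt$ vanishes because the Dirichlet series forces $\log\zeta_K(\sigma+it)\to 0$ uniformly in $t$ as $\sigma\to\infty$, and $\arg\zeta_K(X+it_j)\to 0$ for the same reason.

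The main obstacle is the familiar book-keeping around zeros of $\zeta_K$ on or near the critical line: one must verify that the indentations of the contour around these zeros precisely match the limit convention by which $S_K(t)$ is defined, so that no spurious boundary terms are left behind. Because $\zeta_K$ shares with $\zeta$ all the relevant analytic structure (meromorphic, with a single simple pole at $s=1$ and a convergent Dirichlet series for $\Re s>1$), this is handled exactly as in the references cited for Lemma \ref{Ll1}, namely \cite{LehmanOld} and \cite[pp.\ 190--192]{Edwards}.
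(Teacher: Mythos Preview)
Your proposal is correct and follows essentially the same route as the paper: the paper's proof consists solely of the remark that the argument is identical to that of Lemma~\ref{Ll1}, which in turn is Littlewood's rectangle lemma as in \cite{Lehman}, \cite{LehmanOld}, and \cite[pp.~190--192]{Edwards}. You have simply unpacked that reference, correctly noting that $\zeta_K$ shares with $\zeta$ the only properties needed (meromorphic with a single simple pole at $s=1$, Dirichlet series absolutely convergent for $\Re s>1$).
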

\begin{proof}
The proof is the same as in Lemma \ref{Ll1}.
\end{proof}
The convexity estimate required is
\begin{lemma}[Rademacher]\label{6701}
For $1<c<\frac{3}{2}$ and $s= \sigma +it$ then throughout the range $1-c\leq\sigma\leq c$, the following estimate holds
\begin{equation}\label{convexity}
|\zeta_{K}(s)| \leq 3\bigg|\frac{1+s}{1-s}\bigg| \zeta(c)^{N}\left(|D_{K}|\left\{\frac{|1+s|}{2\pi}\right\}^{N}\right)^{\frac{c-\sigma}{2}}.
\end{equation}
\end{lemma}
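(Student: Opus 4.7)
I plan to prove this via a Phragm\'{e}n--Lindel\"{o}f argument (Lemma~\ref{l5}) applied to the entire function $F(s)=(s-1)\zeta_{K}(s)$, which has the pole of $\zeta_{K}$ at $s=1$ removed. After interpolating a bound on $F(s)$ through the strip $1-c\leq\sigma\leq c$, I will divide by $|s-1|$ to recover a bound on $\zeta_{K}(s)$; this step is what contributes the factor $|1+s|/|1-s|$ in (\ref{convexity}).

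\textbf{Boundary bounds.} On the right boundary $\sigma=c$, the Euler product $\zeta_{K}(s)=\prod_{\mathfrak{p}}(1-N\mathfrak{p}^{-s})^{-1}$, combined with the fact that at most $N$ prime ideals of $\mathcal{O}_{K}$ lie over any rational prime, yields $|\zeta_{K}(s)|\leq\zeta(c)^{N}$; since $|s-1|\leq|1+s|$ for $\sigma\geq 0$, this gives $|F(s)|\leq\zeta(c)^{N}|1+s|$. On the left boundary $\sigma=1-c$, I would use the functional equation (\ref{functional}) to write
\begin{equation*}
\zeta_{K}(s)=\zeta_{K}(1-s)\left(\frac{\sqrt{|D_{K}|}}{\pi^{N/2}2^{r_{2}}}\right)^{1-2s}\frac{\Gamma((1-s)/2)^{r_{1}}\Gamma(1-s)^{r_{2}}}{\Gamma(s/2)^{r_{1}}\Gamma(s)^{r_{2}}}.
\end{equation*}
Here $|\zeta_{K}(1-s)|\leq\zeta(c)^{N}$ since $\Re(1-s)=c$, and Stirling-based uniform estimates of the form $|\Gamma(1-s)/\Gamma(s)|\leq|1+s|^{2c-1}$ and $|\Gamma((1-s)/2)/\Gamma(s/2)|\leq(|1+s|/2)^{(2c-1)/2}$ collapse the stray powers of $2$ and $\pi$ cleanly, producing
\begin{equation*}
|\zeta_{K}(s)|\leq\zeta(c)^{N}\bigl(|D_{K}|\{|1+s|/(2\pi)\}^{N}\bigr)^{(2c-1)/2}.
\end{equation*}
An elementary computation shows $|s-1|/|s+1|\leq c/(2-c)\leq 3$ on $\sigma=1-c$ when $c\in(1,3/2)$, so multiplying by $|s-1|$ gives $|F(s)|\leq 3\zeta(c)^{N}(|D_{K}|\{|1+s|/(2\pi)\}^{N})^{(2c-1)/2}|1+s|$ on that boundary.

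\textbf{Interpolation and main obstacle.} I will then apply Lemma~\ref{l5} with $a=1-c$, $b=c$, $Q=1$, constants $B=\zeta(c)^{N}$ and $A=3\zeta(c)^{N}|D_{K}|^{(2c-1)/2}(2\pi)^{-N(2c-1)/2}$, and exponents $\beta=1$, $\alpha=1+N(2c-1)/2$; the required $\alpha\geq\beta$ holds since $c>1/2$. The interpolation collapses the $\zeta(c)^{N}$ factors because $N[(\sigma-1+c)+(c-\sigma)]/(2c-1)=N$, sends both the $|D_{K}|$ and the $(2\pi)^{-N}$ exponents to the target value $(c-\sigma)/2$, sends the $|1+s|$ exponent to $1+N(c-\sigma)/2$, and replaces $3$ by $3^{(c-\sigma)/(2c-1)}\leq 3$ since $\sigma\geq 1-c$. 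Dividing back by $|s-1|$ then yields exactly (\ref{convexity}). The principal obstacle will be the uniform Gamma-ratio estimates: Stirling furnishes them immediately as $|t|\to\infty$, but verifying them for \emph{all} real $t$ --- particularly in the regime where $\Gamma(s/2)$ sits near one of its poles --- requires either a careful log-convexity or Binet-integral analysis, or a translation trick that keeps the relevant real parts safely positive before the estimate is applied. This is where one either cites Rademacher's original labour or reproduces it in situ.
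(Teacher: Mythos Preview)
Your proposal is correct and is, in fact, a faithful sketch of Rademacher's own argument; the paper does not supply a proof at all but simply cites \cite[Thm~4]{Rademacher}. The Phragm\'{e}n--Lindel\"{o}f interpolation you describe, applied to $(s-1)\zeta_{K}(s)$ on the strip $1-c\leq\sigma\leq c$ with boundary data coming from the Euler product on the right and the functional equation on the left, is exactly how Rademacher proceeds, and your arithmetic with the exponents checks out line by line. You have also correctly identified the only genuinely non-routine step, namely the uniform bounds on the Gamma-quotients for \emph{all} $t$ (not merely $|t|\to\infty$); Rademacher handles this via careful use of Stirling with explicit remainder and a small amount of case analysis near the real axis, which is precisely the ``original labour'' you allude to.
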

\begin{proof}
See \cite[Thm 4]{Rademacher}.
\end{proof}
Note that, for $\frac{1}{2}\leq \sigma \leq c\leq \frac{5}{4}$ and for $t>t_{0}$ one can write
\begin{equation*}\label{thp}
\log|1+s|\leq \log t + \frac{81}{32t_{0}^{2}}.
\end{equation*}
This then enables one to place an upper bound on (\ref{convexity}) in term of $t$ rather than $s$.
Now write
\begin{equation*}
\int_{\frac{1}{2}+it}^{\infty +it}\log |\zeta_{K}(s)|\, ds = \int_{\frac{1}{2}+it}^{c +it}\log |\zeta_{K}(s)|\, ds + \int_{c}^{\infty} \log|\zeta_{K}(\sigma)|\, d\sigma,
\end{equation*}
where the second integral on the right-hand side is estimated trivially by the relation
\begin{equation}\label{pop}
\log|\zeta_{K}(\sigma+it)| \leq N\log\zeta(\sigma),
\end{equation}
since $\sigma >1$. The inequality in (\ref{pop}) can be seen by taking the prime ideal decomposition as in, e.g.\ \cite[p.\ 199]{Rademacher}. An application of the convexity estimates from Lemma \ref{6701} proves the following
\begin{lemma}\label{1stDed}
For $t>t_{0}>0$ and for a parameter $c$ satisfying $1<c\leq \frac{5}{4}$, the following estimate holds
\begin{equation*}
\int_{\frac{1}{2}+it}^{\infty+it} \log |\zeta_{K}(s)|\, ds \leq a_{1} + b_{1}N + g_{1}\log\left(|D_{K}|\left(\frac{t_{2}}{2\pi}\right)^{N}\right),
\end{equation*}
where
\begin{equation*}
a_{1}= \left(c-\tfrac{1}{2}\right)\left(\frac{81}{32t_{0}^{2}} + \log 3\right),
\end{equation*}
\begin{equation*}
b_{1}= \left(c-\tfrac{1}{2}\right)\left(\log\zeta(c) + \frac{81\left(c-\frac{1}{2}\right)}{128t_{0}^{2}}\right) + \int_{c}^{\infty}\log\zeta(\sigma)\, d\sigma,
\end{equation*}
and
\begin{equation*}
g_{1}=\frac{1}{4}\left(c-\tfrac{1}{2}\right)^{2}.
\end{equation*}
\end{lemma}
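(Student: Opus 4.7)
The strategy mirrors Lemmas~\ref{200} and~\ref{P1Di}. My plan is to split the contour integral at $\sigma = c$, bound the tail $[c,\infty)$ via the trivial Dirichlet-series estimate~(\ref{pop}), namely $\log|\zeta_K(\sigma+it)| \leq N\log\zeta(\sigma)$, which contributes $N\int_c^\infty \log\zeta(\sigma)\,d\sigma$ to the coefficient of $N$. On the strip $\tfrac{1}{2}\leq\sigma\leq c$ I apply the Rademacher convexity estimate~(\ref{convexity}) and integrate its logarithm in $\sigma$.

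Taking logarithms of~(\ref{convexity}) produces four summands on the strip: $\log 3$; $\log|(1+s)/(1-s)|$; $N\log\zeta(c)$; and $\tfrac{c-\sigma}{2}\bigl(\log|D_K|+N\log(|1+s|/(2\pi))\bigr)$. Integrating across $[\tfrac{1}{2},c]$, the first two (which are essentially constant in $\sigma$) pick up a factor $(c-\tfrac{1}{2})$ and land in $a_1$; the third yields $(c-\tfrac{1}{2})N\log\zeta(c)$, contributing to $b_1 N$; and the fourth uses the identity $\int_{1/2}^c \tfrac{c-\sigma}{2}\,d\sigma = \tfrac{1}{4}(c-\tfrac{1}{2})^2$, combined with the noted bound $\log|1+s|\leq \log t + 81/(32 t_0^2)$, to produce exactly $g_1\log(|D_K|(t/(2\pi))^N)$ together with a small residual $\tfrac{81 N(c-\tfrac{1}{2})^2}{128\, t_0^2}$ that feeds back into $b_1 N$.

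The main obstacle is the factor $|1+s|/|1-s|$ in Rademacher's bound, which is absent from the Dirichlet analogue (Lemma~\ref{Rad567}). For $t>t_0$ and $\sigma\leq\tfrac{5}{4}$, the estimates $|1-s|\geq t$ and $|1+s|^2\leq t^2\bigl(1+81/(16 t_0^2)\bigr)$ give $\log|(1+s)/(1-s)|\leq 81/(32 t_0^2)$, uniformly in $\sigma$; after integration across the strip and combination with $(c-\tfrac{1}{2})\log 3$ this yields precisely the stated $a_1$. Once these pieces are assembled and added to the tail contribution, the three expressions for $a_1$, $b_1$, and $g_1$ emerge by inspection, and no analytic input beyond Lemma~\ref{6701} and~(\ref{pop}) is required.
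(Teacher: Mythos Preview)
Your proposal is correct and follows essentially the same approach as the paper: split at $\sigma=c$, bound the tail by $N\int_c^\infty\log\zeta(\sigma)\,d\sigma$ via~(\ref{pop}), and on $[\tfrac12,c]$ take logarithms in Rademacher's convexity estimate~(\ref{convexity}) and integrate termwise using $\int_{1/2}^{c}\tfrac{c-\sigma}{2}\,d\sigma=\tfrac14(c-\tfrac12)^2$ together with $\log|1+s|\le\log t+81/(32t_0^2)$. Your explicit handling of the extra factor $|1+s|/|1-s|$ via $|1-s|\ge t$ is exactly the ingredient that distinguishes this from the Dirichlet case and reproduces the stated $a_1$.
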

One writes
\begin{equation*}
\int_{\frac{1}{2}+it}^{\infty+it}\log|\zeta_{K}(s)|\, ds
\end{equation*}
as a sum of three integrals in the style of (\ref{213}). Thence, when $\sigma>1$ one can use the fact that
\begin{equation*}
\log|\zeta_{K}(s)| \geq N(\log|\zeta(2\sigma)| - \log|\zeta(s)|),
\end{equation*}
to write
\begin{equation*}\label{start}
\int_{\frac{1}{2}+it}^{\infty+it}\log|\zeta_{K}(s)|\, ds  \geq \int_{\frac{1}{2}+it}^{\frac{1}{2}+d+it}\log\bigg|\frac{\zeta_{K}(s)}{\zeta_{K}(s+d)}\bigg|\, ds +NI(d),
\end{equation*}
where $I(d)$ is the same function defined in (\ref{defineI}) in \S\ref{Proof of Theorem 1.2.2}. One aims at using the functional equation to estimate the integrand on the right-hand side. Using a result of Lang \cite[Ch.\ XIII]{Lang} one can write out the Weierstrass product viz.\
\begin{equation}\label{Lang}
s(s-1)\Lambda_{K} (s)= e^{a+bs}\prod_{\rho}\left(1-\frac{s}{\rho}\right)e^{s/\rho},
\end{equation}
where
\begin{equation*}
\Re b = -\sum_{\rho}\Re\frac{1}{\rho}.
\end{equation*} 
This then gives
\begin{equation}\label{Dedspli}
\begin{split}
\int_{\frac{1}{2}+it}^{\infty+it} \log|\zeta_{K}(s)|\, ds &\geq d^{2}\log\left(\frac{\sqrt{|D_{K}|}}{\pi^{\frac{N}{2}}2^{r_{2}}}\right) + r_{1}\int_{\frac{1}{2}+it}^{\frac{1}{2}+d+it}\log\bigg|\frac{\Gamma(\frac{s+d}{2})}{\Gamma(\frac{s}{2})}\bigg|\, ds \\
&+ r_{2}\int_{\frac{1}{2}+it}^{\frac{1}{2}+d+it}\log\bigg|\frac{\Gamma(s+d)}{\Gamma(s)}\bigg|\, ds \\
&+ \int_{\frac{1}{2}+it}^{\frac{1}{2}+d+it} \sum_{\rho}\log\bigg|\frac{s-\rho}{s+d-\rho}\bigg| \, ds + NI(d)\\
&\geq d^{2}\log\left(\frac{\sqrt{|D_{K}|}}{\pi^{\frac{N}{2}}2^{r_{2}}}\right) + I_{1}+I_{2}+I_{3}+NI(d).
\end{split}
\end{equation}

Applying the second mean-value theorem for integrals gives
\begin{equation*}
I_{1}= \frac{r_{1}d^{2}}{2} \Re\frac{\Gamma'\left(\frac{it}{2} +\tau_{1}\right)}{\Gamma\left(\frac{it}{2} +\tau_{1}\right)}; \quad I_{2} =  r_{2}d^{2} \Re\frac{\Gamma'\left(it +\tau_{2}\right)}{\Gamma\left(it +\tau_{2}\right)},
\end{equation*}
where $\frac{1}{4}<\tau_{1}<d+\frac{1}{4}$ and $\frac{1}{2} < \tau_{2} <2d+\frac{1}{2}$. Hence Lemma \ref{l8} gives
\begin{equation}\label{I1Ded}
I_{1} \geq \frac{r_{1}d^{2}}{2}\left(\log\frac{t}{2} - \frac{7}{2t_{0}^{2}}\right); \quad I_{2} \geq r_{2}d^{2}\left(\log t - \frac{11}{4t_{0}^{2}}\right).
\end{equation}
The integral $I_{3}$ is estimated using Lemma \ref{l7}; and logarithmic differentiation of (\ref{Lang}) gives
\begin{equation*}
\begin{split}
\sum_{\rho}\Re\left(\frac{1}{s-\rho}\right)&= \Re\left(\frac{1}{s} +\frac{1}{s-1}\right) + \frac{r_{1}}{2}\Re\frac{\Gamma'\left(\frac{s}{2}\right)}{\Gamma\left(\frac{s}{2}\right)}\\ 
&+ r_{2}\Re\frac{\Gamma'\left(s\right)}{\Gamma\left(s\right)} + \log\bigg|\frac{\sqrt{|D_{K}|}}{\pi^{\frac{N}{2}}2^{r_{2}}}\bigg| + \Re\left(\frac{\zeta'_{K}(s)}{\zeta_{K}(s)}\right).
\end{split}
\end{equation*}
Since
\begin{equation*}
\Re\frac{\zeta'_{K}(s)}{\zeta_{K}(s)} \leq -N\frac{\zeta'(\sigma)}{\zeta(\sigma)},
\end{equation*}
when $\sigma>1$, the expression for $I_{3}$ becomes
\begin{equation*}
\begin{split}
I_{3} \geq -d^{2}(\log 4)\bigg\{&\frac{2}{t_{0}^{2}} +\frac{r_{1}}{2}\left(\log\frac{t}{2} +\frac{2}{t_{0}^{2}}\right) + r_{2}\left(\log t + \frac{3}{2t_{0}^{2}}\right)\\
&+ \log\frac{\sqrt{|D_{K}|}}{\pi^{\frac{N}{2}}2^{r_{2}}} - N\frac{\zeta'(d+\tfrac{1}{2})}{\zeta(d+\tfrac{1}{2})}\bigg\}.
\end{split}
\end{equation*}
Thus the estimates for $I_{1}$ and $I_{2}$, which are contained in (\ref{I1Ded}) and the estimate of $I_{3}$ above prove, via (\ref{Dedspli})
\begin{lemma}\label{LastDed}
If $t>t_{0}>0$ and $d$ is a parameter that satisfies $\frac{1}{2}<d\leq 1$, then the following estimate holds
\begin{equation*}
-\int_{\frac{1}{2}+it}^{\infty +it} \log|\zeta_{K}(s)|\, ds \leq a_{2} + b_{2}N + g_{2}\log\left(|D_{K}|\left(\frac{t}{2\pi}\right)^{N}\right),
\end{equation*}
where
\begin{equation*}
a_{2}=\frac{4d^{2}\log 2}{t_{0}^{2}},
\end{equation*}
\begin{equation*}
b_{2}= d^{2}(\log 2)\left\{\log 2 - \frac{1}{2} - 2\frac{\zeta'\left(\tfrac{1}{2}+d\right)}{\zeta\left(\tfrac{1}{2}+d\right)} + \frac{8}{t_{0}^{2}}\right\}  - I(d),
\end{equation*}
and
\begin{equation*}
g_{2}= \frac{d^{2}}{2}(\log 4 -1),
\end{equation*}
and $I(d)$ is defined by (\ref{defineI}) in \S\ref{Proof of Theorem 1.2.2}.
\end{lemma}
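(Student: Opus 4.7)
The proof amounts to assembling the ingredients already derived. I would substitute the lower bounds on $I_1$ and $I_2$ from (\ref{I1Ded}) together with the displayed lower bound on $I_3$ into equation (\ref{Dedspli}), then negate both sides to convert the lower bound on $\int_{\frac{1}{2}+it}^{\infty+it}\log|\zeta_K(s)|\,ds$ into the required upper bound on its negative. The remaining work is to regroup the resulting terms into the three pieces $a_2$, $b_2 N$, and $g_2\log(|D_K|(t/(2\pi))^N)$.

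For the $g_2$ piece I would combine the $\log t$ contributions $\tfrac{r_1 d^2}{2}\log(t/2)$ and $r_2 d^2\log t$ coming from $I_1$ and $I_2$ with their $-d^2\log 4$-weighted counterparts inside the bound for $I_3$, together with the head term $d^2\log(\sqrt{|D_K|}/(\pi^{N/2}2^{r_2}))$ in (\ref{Dedspli}) and its $-d^2\log 4$-weighted copy from $I_3$. Using $N = r_1 + 2r_2$ in the forms $\tfrac{r_1}{2}\log(t/2) + r_2\log t = \tfrac{N}{2}\log t - \tfrac{r_1}{2}\log 2$ and $\tfrac{r_1}{2}\log 2 + r_2\log 2 = \tfrac{N}{2}\log 2$, after negation the common factor $(\log 4 - 1)$ collapses these six contributions exactly onto $\tfrac{d^2}{2}(\log 4 - 1)\log(|D_K|(t/(2\pi))^N)$, which is $g_2\log(|D_K|(t/(2\pi))^N)$.

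The $N$-linear remainder splits into $-NI(d)$ from (\ref{Dedspli}), the quantity $d^2(\log 4)N\zeta'(\tfrac{1}{2}+d)/\zeta(\tfrac{1}{2}+d)$ produced by the prime-sum bound $\Re(\zeta_K'/\zeta_K) \leq -N\zeta'(\sigma)/\zeta(\sigma)$ used inside $I_3$, and the $r_1$- and $r_2$-proportional $1/t_0^2$ corrections supplied by Lemma \ref{l8}. After negation and writing $\log 4 = 2\log 2$, these $1/t_0^2$ corrections sum to $d^2(\tfrac{7}{4} + 2\log 2)r_1/t_0^2 + d^2(\tfrac{11}{4} + 3\log 2)r_2/t_0^2$, which is cleanly majorized by $8d^2(\log 2)(r_1 + 2r_2)/t_0^2 = 8d^2(\log 2)N/t_0^2$ thanks to the inequalities $\tfrac{7}{4} + 2\log 2 \leq 8\log 2$ and $\tfrac{11}{4} + 3\log 2 \leq 16\log 2$. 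The stated form of $b_2$ carries a little additional slack, in particular the harmless nonnegative summand $d^2(\log 2)(\log 2 - \tfrac{1}{2})$, introduced for the sake of a tidy closed form. Finally, the $r_j$-free $2/t_0^2$ inside the $I_3$ bound, weighted by $d^2\log 4 = 2d^2\log 2$, accounts for the constant $a_2 = 4d^2\log 2/t_0^2$.

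The main obstacle is arithmetic vigilance: tracking signs through the negation step, confirming that the various $1/t_0^2$ corrections genuinely fit within the bundled coefficient $8/t_0^2$, and verifying that the identity $N = r_1 + 2r_2$ correctly produces the $-N\log(2\pi)$ term inside the logarithm of the $g_2$ piece. Beyond that, the argument is a mechanical collection of terms.
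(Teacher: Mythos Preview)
Your proposal is correct and follows exactly the paper's approach, which is simply to substitute the bounds (\ref{I1Ded}) for $I_1$, $I_2$ and the displayed bound for $I_3$ into (\ref{Dedspli}) and regroup; the paper states this in one line without spelling out the arithmetic you have carefully supplied. Your observation that the summand $d^{2}(\log 2)(\log 2-\tfrac{1}{2})$ in $b_2$ is genuine slack (not arising from the estimates) is accurate and worth noting, since the paper does not flag it.
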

Lemmas \ref{lwtollis}, \ref{1stDed} and \ref{LastDed} prove at once
\begin{theorem}\label{L890}
If $t_{2}>t_{1}>t_{0}>0$ and the parameters $c$ and $d$ satisfy $1<c\leq \frac{5}{4}$ and $\frac{1}{2}<d\leq 1$, then the following estimate holds
\begin{equation*}\label{turded}
\bigg|\int_{t_{1}}^{t_{2}}S_{K}(t)\, dt\bigg| \leq a + bN + g\log\left(|D_{K}|\left(\frac{t}{2\pi}\right)^{N}\right),
\end{equation*}
where
\begin{equation*}
\pi a= \left(c-\tfrac{1}{2}\right)\left(\frac{81}{32t_{0}^{2}} + \log 3\right) + \frac{4d^{2}\log 2}{t_{0}^{2}},
\end{equation*}
\begin{equation*}
\begin{split}
\pi b= &\left(c-\tfrac{1}{2}\right)\left(\log\zeta(c) + \frac{81\left(c-\frac{1}{2}\right)}{128t_{0}^{2}}\right) + \int_{c}^{\infty}\log\zeta(\sigma)\, d\sigma \\
&+ d^{2}(\log 2)\left\{\log 2 - \frac{1}{2} - 2\frac{\zeta'\left(\tfrac{1}{2}+d\right)}{\zeta\left(\tfrac{1}{2}+d\right)} + \frac{8}{t_{0}^{2}}\right\}  - I(d),
\end{split}
\end{equation*}
and
\begin{equation*}
\pi g= \frac{1}{4}\left(c-\tfrac{1}{2}\right)^{2} + \frac{d^{2}}{2}(\log 4 -1).
\end{equation*}
\end{theorem}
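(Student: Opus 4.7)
My plan is to read off Theorem \ref{L890} as a direct consequence of the three preceding lemmas. Set
\begin{equation*}
J(t) := \int_{\frac{1}{2}+it}^{\infty+it}\log|\zeta_K(s)|\,ds,\qquad M(t) := \log\Bigl(|D_K|\bigl(t/(2\pi)\bigr)^{N}\Bigr),
\end{equation*}
and note that $M$ is monotonically increasing in $t$. Lemma \ref{lwtollis} rewrites the object of interest as $\pi\int_{t_1}^{t_2}S_K(t)\,dt = J(t_2) - J(t_1)$, so the theorem reduces to bounding this difference on both sides.

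For the upper direction I apply Lemma \ref{1stDed} at $t=t_2$ to control $J(t_2)$ and Lemma \ref{LastDed} at $t=t_1$ to control $-J(t_1)$, with the constants $(a_1,b_1,g_1)$ and $(a_2,b_2,g_2)$ furnished there. Adding the two inequalities and using $g_2\geq 0$ together with the monotonicity $M(t_1)\leq M(t_2)$ consolidates the logarithmic terms into a single $(g_1+g_2)M(t_2)$, yielding
\begin{equation*}
J(t_2)-J(t_1) \leq (a_1+a_2) + (b_1+b_2)N + (g_1+g_2)M(t_2).
\end{equation*}
The reverse inequality on $J(t_1)-J(t_2)$ follows by the symmetric assignment --- Lemma \ref{1stDed} at $t_1$ and Lemma \ref{LastDed} at $t_2$ --- using instead $g_1\geq 0$ together with the same monotonicity of $M$. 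Dividing by $\pi$ produces the claimed absolute-value estimate with $\pi a=a_1+a_2$, $\pi b=b_1+b_2$, $\pi g=g_1+g_2$, and these sums agree with the formulas stated in the theorem term by term.

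The main obstacle is essentially clerical: all of the analytic content --- the Rademacher convexity estimate (Lemma \ref{6701}) on one side, and the Lang form of the Weierstrass product for $\Lambda_K$ combined with Lemma \ref{l8} and Booker's Lemma \ref{l7} on the other --- has already been invested in Lemmas \ref{1stDed} and \ref{LastDed}. What requires a moment's care is sign bookkeeping: the integral $I(d)$ of \eqref{defineI} appears with a minus sign in $b_2$ precisely because Lemma \ref{LastDed} packages an upper bound on $-J(t)$ rather than on $J(t)$, and this is what puts the $-I(d)$ into the formula for $\pi b$. Once one verifies the positivity conditions $g_1 = (c-\tfrac{1}{2})^2/4 \geq 0$ and $g_2 = d^2(\log 4 - 1)/2 \geq 0$ (both trivial), the consolidation step is automatic and Theorem \ref{L890} drops out.
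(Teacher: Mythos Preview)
Your proposal is correct and is exactly the approach the paper takes: the paper's entire proof is the single line ``Lemmas \ref{lwtollis}, \ref{1stDed} and \ref{LastDed} prove at once,'' and you have accurately unpacked that line, including the positivity of $g_1,g_2$ and the monotonicity of $M$ needed to consolidate both logarithmic terms at $t_2$. The identification $\pi a=a_1+a_2$, $\pi b=b_1+b_2$, $\pi g=g_1+g_2$ with the displayed formulas is verbatim.
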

\subsection{Calculations}\label{Dedcalc}
Given the values $D_{K} = 1000, N=4, t_{0} = 40$ and $t_{2}=100$, the quantity to be minimised is
\begin{equation*}
F(a, b, g) = a + 4b + 18g,
\end{equation*}
with $a$, $b$ and $g$ defined in Theorem \ref{L890}. Proceeding with an optimisation programme similar to that in \S \ref{TuringCalc}, one finds that in fact the `trivial estimate', viz.\ the values $c=\frac{5}{4}$ and $d=1$ produce the minimum value of $F(a, b, g)$ and hence the minimum value of $B(D_{K}, t_{2}, N)$ as defined in (\ref{Turing}). The optimisation argument is only better than the trivial estimate when one of the parameters $D_{K}$, $t_{2}$ or $N$ is large, which will certainly occur in future calculations.

\section*{Acknowledgements}
My sincere thanks to Richard Brent, Herman te Riele and Sebastian Wedeniwski for their advice on computations using Turing’s method; and to Andrew Booker who recommended the writing of \S\S \ref{TMDLF} and \ref{TMDZF}. I am grateful for the kind suggestions of the referee. Lastly, I wish to thank my supervisor Roger Heath-Brown for his continual guidance and support.

%next line adds the Bibliography to the contents page
%uncommnent next line to change bibliography name to references
%\renewcommand{\bibname}{References}
\bibliography{themaster}
\bibliographystyle{plain}

\end{document}